\renewcommand{\norm}[1]{\left\lVert#1\right\rVert}
\renewcommand{\abs}[1]{\left\lvert#1\right\rvert}
\numberwithin{equation}{section}
\newcommand{\dd}{\,\mathrm{d}}
\providecommand{\seminormtmp}[2]{{#1[{#2}#1]}}
\providecommand{\seminorm}[1]{\seminormtmp{}{#1}}
\title[Convex integration for SQG with generic additive noise]{On convex integration solutions to the surface quasi-geostrophic equation driven by generic additive noise}
\author{Florian Bechtold, Theresa Lange, J\"orn Wichmann}
\email{\{fbechtold, tlange\}@math.uni-bielefeld.de,  Joern.Wichmann@monash.edu}
\begin{document}
\begin{abstract}
    We study the surface quasi-geostrophic equation driven by a generic additive noise process $W$. By means of convex integration techniques, we establish existence of weak solutions whenever the stochastic convolution $z$ associated with $W$ is well defined and fulfills certain regularity constraints. Quintessentially, we show that the so constructed solutions to the non-linear equation are controlled by $z$ in a linear fashion. This allows us to deduce further properties of the so constructed solutions, without relying on structural probabilistic properties such as Gaussianity, Markovianity or a martingale property of the underlying noise $W$. 
\end{abstract}
\maketitle

\noindent
\section{Introduction}
In this work, we are concerned with the construction of solutions to the surface quasi-geostrophic equation (SQG equation) on the two-dimensional torus $\mathbb{T}^2$
\begin{equation}
\begin{split}
    \label{sqg intro}
    \partial_t\theta+u\cdot \nabla \theta &=-\nu \Lambda^\gamma\theta+\Lambda^\delta \dot{W} \\
    u&=\nabla^{\perp}\Lambda^{-1}\theta=(-\partial_2 \Lambda^{-1}\theta, \partial_1 \Lambda^{-1} \theta)=(-\mathcal{R}_2 \theta, \mathcal{R}_1 \theta)=\mathcal{R}^\perp\theta
\end{split}
\end{equation}
where $\Lambda=(-\Delta)^{1/2}$, $\nu\geq 0$, $\gamma\in [0, 3/2)$, $\mathcal{R}$ is the pair of Riesz-transforms and $\dot{W}$ a generic noise process on a probability space $(\Omega, \mathcal{F}, \mathbb{P})$ for which the associated stochastic convolution 
\[
z_t:=\int_0^t e^{-\nu (t-s)\Lambda^\gamma}\Lambda^{\delta-1}{\rm d}W_s
\]
is well defined and satisfies certain integrability conditions to be made precise below. Towards this end, we adapt a convex integration scheme originating from \cite{MR4340931} and subsequently extended in \cite{hofmanovaconvex4, hofmanovaconvex6} working on the level of the transformed unknown $g=\Lambda^{-1}\theta-z$, which satisfies
\begin{align} \label{intro:Main}
\nabla \cdot \left( -\partial_t \mathcal{R} g + \nabla^{\perp} (g + z) \Lambda (g + z)\right) = \nabla \cdot \left(\nu \Lambda^{\gamma -1} \nabla g\right) \quad \text{ in } \Omega \times \mathbb{R} \times \mathbb{T}^2,
\end{align}

\begin{definition} \label{def:WeakSolution}
We call $g \in C_{\mathrm{loc}}\big(\mathbb{R}; \dot{H}^{1/2}(\mathbb{T}^2)\big)$ a weak solution to~\eqref{intro:Main} if
\begin{align} \label{eq:SolutionConcept}
\langle \nabla \cdot [ -\partial_t \mathcal{R} g], \xi \rangle - \langle \Lambda (g + z)  , \nabla^\perp(g+z) \cdot \nabla \xi  \rangle  = \langle \nabla \cdot[\nu \Lambda^{\gamma -1} \nabla g], \xi \rangle 
\end{align}
for all $\xi \in C^\infty_c ( \mathbb{R} \times \mathbb{T}^2 )$ and $\mathbb{P}$-a.s., where the non-linearity is understood in the sense of Lemma \ref{lem:Commutator}. 
\end{definition}
For the rigorous connection between \eqref{intro:Main} and \eqref{sqg intro}, we refer to subsection \ref{equation trafo intro} in the appendix. Our main result is the following. For the notational conventions employed, we refer to Section \ref{notation section}. 
\begin{theorem}
    Let $W$ be a stochastic process such that the expression 
    \[
    z_t=\int_0^t e^{-\nu(t-s)\Lambda^\gamma} \Lambda^{\delta-1}{\rm d}W_s
    \]
    is well defined and satisfies
    \begin{equation}
        \label{regularity z}
          \seminorm{z}_{L^{2m}_\omega C^\alpha_tC^{1+}_x}+\norm{z}_{L^{2m}_\omega C_{\rm loc}C^{1+\kappa}_x}<\infty
\end{equation}
    for some $m\geq 1, \alpha>0$ and $\kappa>1/2$, $\delta\in \mathbb{R}$ and $\gamma<3/2$. Then there exists a weak solution $g$ to \eqref{intro:Main} in the sense of Definition \ref{def:WeakSolution} satisfying
    \begin{equation}
            \norm{g}_{L^{2m}_\omega C_{\rm loc}B^{1/2}_{\infty, 1}}\leq C ( \seminorm{z}_{L^{2m}_\omega C^\alpha_tC^{1+}_x}+\norm{z}_{L^{2m}_\omega C_{\rm loc}C^{1+\kappa}_x}+1).
            \label{main bound}
    \end{equation}
\end{theorem}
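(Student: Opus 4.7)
The approach is a pathwise convex integration scheme for \eqref{intro:Main} in the spirit of \cite{MR4340931, hofmanovaconvex4, hofmanovaconvex6}, with every estimate in the iteration tracked linearly in the quantity
\[
\mathcal{N}(z) := \seminorm{z}_{C^\alpha_t C^{1+}_x} + \norm{z}_{C_{\rm loc} C^{1+\kappa}_x},
\]
so that the final bound on $g$ inherits linear dependence on $\mathcal{N}(z)$, whence \eqref{main bound} follows by taking $L^{2m}_\omega$-expectation and applying \eqref{regularity z}.

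Concretely, I would construct iterates $(g_q, R_q)_{q\geq 0}$ solving the relaxed equation
\[
\nabla \cdot \bigl(-\partial_t \mathcal{R} g_q + \nabla^\perp(g_q+z_{\ell_q})\Lambda(g_q+z_{\ell_q}) - \nu \Lambda^{\gamma-1}\nabla g_q \bigr) = \nabla \cdot R_q,
\]
with $z_{\ell_q}$ a space-time mollification of $z$ at scale $\ell_q\to 0$, starting from $g_0=0$ and $R_0$ encoding the frozen $z$-interaction. The inductive hypotheses take the standard form
\[
\norm{g_q}_{H^{1/2}} + \lambda_q^{-1}\norm{g_q}_{C^1_{t,x}} \lesssim 1 + \mathcal{N}(z), \qquad \norm{R_q}_{L^1} \lesssim \delta_{q+1}\bigl(1 + \mathcal{N}(z)\bigr),
\]
with parameters $\lambda_q = a^{b^q}$, $\delta_q = \lambda_q^{-2\beta}$ and $\beta$ small. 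The perturbation $w_{q+1} := g_{q+1}-g_q$ is built from intermittent Mikado-type building blocks oscillating at frequency $\lambda_{q+1}$ and supported in thin tubes, with amplitudes tuned to invert the symbol of the quadratic form $(v,w)\mapsto \nabla^\perp v\, \Lambda w$ applied to $R_q$. The principal cancellation is purely quadratic in $w_{q+1}$ and therefore independent of $z$; the $z$-dependent cross terms $\nabla^\perp w_{q+1}\Lambda z_{\ell_q} + \nabla^\perp z_{\ell_q}\Lambda w_{q+1}$ enter $R_{q+1}$ directly through an inverse divergence and, thanks to $\kappa > 1/2$, contribute exactly one power of $\mathcal{N}(z)$ at the required scale $\delta_{q+2}$. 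The nonlinear product is interpreted via Lemma \ref{lem:Commutator}.

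Convergence of the scheme is then standard: interpolating between the $H^{1/2}$ and $C^1$ bounds on $w_{q+1}$ yields a summable $B^{1/2}_{\infty,1}$ increment, so $g_q \to g$ in $C_{\rm loc} B^{1/2}_{\infty,1}$ while $R_q \to 0$ in $L^1$, identifying $g$ as a pathwise weak solution satisfying \eqref{eq:SolutionConcept}. Since each iteration is a measurable operation on $z$, the limit is a measurable random field, and taking $L^{2m}_\omega$-norms gives \eqref{main bound}. I expect the main obstacle, as compared with deterministic convex integration or schemes for additive Wiener noise where stopping-time arguments are available, to be keeping the $\mathcal{N}(z)$-dependence strictly linear throughout: this forces the amplitudes in the intermittent ansatz to be chosen independently of $\mathcal{N}(z)$, and requires the $z$-cross terms to be \emph{absorbed} into the stress rather than cancelled against the principal part of $w_{q+1}$. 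A secondary subtlety is choosing the initial data $(g_0, R_0)$ so that the inductive bound holds at $q=0$, which is accomplished by taking the base frequency $a$ large depending on $\mathcal{N}(z)$; linearity in $\mathcal{N}(z)$ is preserved because this dependence enters only through negative powers $\lambda_q^{-\eta}$ with $\eta > 0$.
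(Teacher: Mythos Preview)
Your outline follows the convex integration route of the paper, but the bookkeeping of the $z$-dependence contains a genuine gap that prevents closure of the iteration as written. You propose the inductive bound $\norm{R_q}\lesssim \delta_{q+1}(1+\mathcal{N}(z))$ together with a perturbation $w_{q+1}$ whose amplitude is calibrated to $R_q$ so that the quadratic term cancels the old stress. This forces $\norm{w_{q+1}}\sim \sqrt{\delta_{q+1}(1+\mathcal{N}(z))}$, and then the cross term you wish to absorb obeys
\[
\norm{\nabla^\perp w_{q+1}\,\Lambda z + \nabla^\perp z\,\Lambda w_{q+1}}\;\sim\;\sqrt{\delta_{q+1}}\,\sqrt{1+\mathcal{N}(z)}\;\mathcal{N}(z)\;\sim\;\sqrt{\delta_{q+1}}\,(1+\mathcal{N}(z))^{3/2},
\]
which is not $\lesssim \delta_{q+2}(1+\mathcal{N}(z))$ unless $\mathcal{N}(z)\lesssim 1$. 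The linear hypothesis on $R_{q+1}$ therefore does not propagate. Declaring the amplitudes ``independent of $\mathcal{N}(z)$'' does not help either: if $w_{q+1}$ carries no $z$-dependence then $w_{q+1}\otimes w_{q+1}$ cannot cancel the $z$-dependent part of $R_q$, which then never decays.

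The paper closes the scheme by making the residual bound \emph{quadratic} in $z$ and working in moment norms rather than pathwise: the concentration parameter is $r_n=C_{\mathrm{start}}(C_z^2+1)\lambda_n^{-\beta}$ with $C_z$ the deterministic $L^{2m}_\omega$-norm of $z$. The building blocks then satisfy $\norm{g_n}\lesssim\sqrt{r_{n-1}}\lesssim C_z+1$, and the stochastic cross term is estimated via $r_N^{1/2}C_z\leq (C_z^2+1)^{1/2}C_z\lesssim C_z^2+1\sim r_N$, which preserves the quadratic hypothesis. The linear bound~\eqref{main bound} appears only at the very end, because $g=\sum g_n$ is controlled by $\sum\sqrt{r_n}\lesssim C_z+1$. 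A further structural point: the frequency parameters $(a,b,\beta)$ are chosen \emph{independently} of $C_z$ (cf.\ the remark after Lemma~\ref{lem:ControlChoice}); your proposal to take $a$ large depending on the pathwise $\mathcal{N}(z)(\omega)$ would make the entire construction $\omega$-dependent and obstruct the passage to $L^{2m}_\omega$. Finally, the building blocks here are the Cheng--Kwon--Li cosines~\eqref{def:Block01}--\eqref{def:Block02} with Riesz-type amplitudes, not intermittent Mikado flows, and the residual is measured in the $X$-norm (an $L^\infty$-type norm adapted to $\mathcal{R}^o_j$) rather than $L^1$.
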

Let us comment on the main novelties of the above result, to be discussed also later in the paper:
\begin{itemize}
    \item Despite the equation \eqref{intro:Main} being non-linear in $g$, we are able to construct solutions which are controlled by the forcing in a \textit{linear} fashion as expressed by \eqref{main bound}. 
    \item As a consequence, whenever quantified moment estimates on $z$ are available, they carry over to the solution $g$. For example, in the case of $W$ being a cylindrical Wiener process on $L^2(\mathbb{T}^2)$ (which in particular is Gaussian), this immediately implies sub-Gaussian moment estimates of the form 
    \[
    \mathbb{E}\left[ \exp{\epsilon \norm{g}_{C_{\rm loc}B^{1/2}_{\infty, 1}}^2}\right]<\infty
    \]
    for some $\epsilon>0$, unavailable in previous stochastic convex integration schemes. 
    \item We stress that in terms of the underlying stochastic process $W$, our Theorem merely requires \eqref{regularity z}. In particular, it is not necessary to assume much on the specific probabilistic structure of $W$ such as Markovianity, Gaussianity or the martingale property. Indeed, \eqref{regularity z} can be assured in rather general contexts, as we demonstrate in a dedicated examples section \ref{examples section}. This conveys a remarkable robustness of the convex integration scheme.
\end{itemize}

\subsection{Literature on SQG}
Let us begin by discussing results in the literature on the case of the deterministic SQG equations, i.e. \eqref{sqg intro} with $W=0$. As a special case of the quasi-geostrophic equations, they form a model of potential temperature in the context of geophysical fluid dynamics, describing the evolution of boundary temperature in a fast rotating stratified fluid. For more details on its physical interpretation, derivation and application in atmospheric sciences, meteorology and oceanography refer to \cite{pedlosky1982}, \cite{lapeyre2017}, \cite{constantin1994}, \cite{held1995}, where in particular \cite{constantin1994} illustrates an interesting analogy to the 3D Euler equations underlining the interest in studying SQG. In the deterministic case, though not exhaustive, one may highlight the following results: some works on well-posedness of SQG show global existence of weak solutions in case $\nu \geq 0, 0 < \gamma \leq 2$ in $L_t^{\infty}L_x^2$ for any initial data in $L^2(\mathbb{T}^2)$ \cite{resnick1996}, as well as in the case $\nu > 0, 0 < \gamma \leq 2$ in $L_t^{\infty}H_x^{-1/2}$ for initial conditions in $\dot{H}_x^{1/2}(\mathbb{R}^2)$, and of regularity $L_t^{\infty}L_x^p$ for initial conditions in $L_x^p(\mathbb{R}^2), p \geq 4/3$ \cite{marchand2008}. Some results and discussions on potential blow-up of solutions can be found in \cite{ohkitani1997}, \cite{cordoba1997}, \cite{cordoba1998}, \cite{constantin1999}, \cite{castro2010}, \cite{scott2011}. Furthermore, global existence of regular solutions was derived in \cite{foldes2021} via the existence of an invariant measure on $\mathbb{T}^2$ and for all initial conditions in the support of that measure. In terms of uniqueness, \cite{cordoba2018} showed uniqueness of SQG patches with moving boundary satisfying an arc-chord condition in the case $\nu =0$; however, for $\nu \geq 0, 0 < \gamma < 2 -\beta$, \cite{buckmaster2019} establish weak solutions $(-\Delta)^{-1/2}\theta \in C_t^{\sigma}C_x^{\beta}$ for $1/2 < \beta < 4/5, \sigma < \frac{\beta}{2-\beta}$, via convex integration rendering the solutions non-unique. Also see \cite{isett2021} for a direct approach. For the steady-state SQG equation, \cite{cheng2021} derive existence of stationary weak solutions employing a similar convex integration scheme yielding non-uniqueness of the solutions.\\
\\
There have been several results in the direction of "regularization by noise" obtained in connection with the SQG equations. Observe that \cite{foldes2021} derive an invariant measure for the deterministic case via invariant measures for SQG perturbed by a carefully chosen additive noise. Including random perturbations in general has shed new light on questions of well-posedness: in the cases of additive and linear multiplicative noise, \cite{rockner2014} and \cite{rockner2015} provide local existence and uniqueness (continuous in some $H^s$ on a smooth bounded domain \cite{rockner2014}, and in $L^p(\mathbb{T}^2), p \in (2,\infty)$ \cite{rockner2015}) as well as non-explosion and ergodicity results whereas in \cite{zhu2017} existence of a random attractor is analysed. We would furthermore like to mention the work \cite{forstner2021} on local well-posedness for space-time white noise via regularity structures, as well as a large deviation principle for small multiplicative noise and small time in \cite{liu2013}. Further note the regularization result in \cite{buckmaster2020} via random diffusions. 

\subsection{Stochastic convex integration results}
After the advent of deterministic convex integration results (for instance see \cite{Buckmaster20201} and references therein), it became natural to ask if the techniques developed in this context carry over to corresponding stochastic problems in order to establish non-uniqueness results. With this background, Hofmanová, Zhu and Zhu were able to adapt convex integration methods to hydrodynamic equations with noise. Their first papers \cite{hofmanovaconvex1, hofmanovaconvex3, hofmanovaconvex2} did so by working with stopping time techniques, which has the advantage of avoiding moment estimates along the convex integration iteration. For a gentle overview of these results, refer also to \cite{Hofmanovoverview}. In this context, let us also mention the works of Yamazaki \cite{Yamazaki2022, Yamazaki20222, Yamazaki2023} and Schenke and Rehmeier \cite{Rehmeier2023} where in particular \cite{yamazaki20221} treats the case of SQG equations with additive noise deriving existence and non-uniqueness in law of solutions of regularity $C_tC_x^{1/2+\iota} \cap C_t^{\eta}C_x, \iota \in\left(0,1/2-\gamma/3\right), \eta \in (0,1/3]$. Establishing moment estimates poses a problem in all these works, as convex integration schemes typically involve super-linear estimates that need to be controlled through careful parameter-tuning. In particular, as the estimate of a given moment in iteration $n+1$ thus requires higher and higher moment estimates on the previous levels, one is unable to close moment estimates. \\
\\
In \cite{chen2022}, this issue was for the first time overcome by a moment-dependent parameter-tuning that allows to control the blow-up of higher and higher moments along the iteration scheme, exploiting the quantified moment behaviour of the underlying Gaussian noise. While this opened the door for the establishment of stationary solutions through a Krylov-Boguliobov type argument in \cite{hofmanovaconvex5}, the drawback of the approach consists in the fact that one can not simultaneously study different moments of a specific convex integration, as the scheme itself depends on the moment one intends to inverstigate in the beginning. Also, as the tuning depends on the exact moment behaviour of the underlying noise, extensions to different noises are not obvious. \\
\\
Inspired by \cite{cheng2021}, Hofmanov\'a et al. study moment-based convex integration in the context of the SQG equations: for the case of only space dependent noise \cite{hofmanovaconvex4} and recently space-time white noise \cite{hofmanovaconvex6}, they obtain infinitely many non-Gaussian weak solutions (of regularity $L^p_{\rm loc}(0,\infty);B_{\infty,1}^{-1/2}) \cap C_b([0,\infty);B_{\infty,1}^{-1/2-})\cap C_b^1([0,\infty);B_{\infty,1}^{-3/2-})$ for all $p \in [1,\infty)$ \cite{hofmanovaconvex4}, and of regularity $L^p_{\rm loc}([0,\infty);B_{\infty,1}^{-1/2})\cap C([0,\infty),B_{\infty,1}^{-1/2-})\cap C^{1/2-}([0,\infty),B_{\infty,1}^{-1-})$ for all $p \in [1,\infty)$ \cite{hofmanovaconvex6}, respectively) as well as infinitely many stationary (ergodic) solutions. While one main focus of \cite{hofmanovaconvex6} consists in demonstrating that convex integration schemes can serve to establish solutions to supercritical SPDEs inaccessible by tools such as regularity structures or paracontrolled distributions, the authors were able to also show that the convex integration scheme can be set up independently of a given moment fixed in the beginning. This allows them to show that arbitrary moments of the constructed convex integration solutions exist, provided this is the case for the initial condition. Further quantified moment bounds, however, are lacking. In particular, the questions of existence of exponential moments is left open and the consideration of more general noises is lacking.

\subsection{Structure of the paper}
After introducing some basic notation and preliminaries in Section \ref{notation section}, we proceed to give the convex integration construction and thus the proof of our main theorem in Section \ref{convex int section}. We then provide some concrete examples of noises to which our result is applicable, i.e. for which we explicitly verify the condition \eqref{regularity z}. We treat one class of general noises in the Young integration setting by means of the Sewing Lemma and another class of noises by means of more classical stochastic calculus arguments, in neither assuming Gaussianity, the Markov nor martingale property. We finally conclude by discussing some immediate consequences of our main result which can be obtained by a slight adaption of arguments in \cite{hofmanovaconvex4, hofmanovaconvex6}.


\section{Notation and preliminaries}
\label{notation section}
We write $f \lesssim g$ for two non-negative quantities $f$ and $g$ if $f$ is bounded by $g$ up to a multiplicative universal constant. Accordingly we define $\gtrsim$ and $\eqsim$. For two Banach spaces $X$ and $Y$ we write $X \hookrightarrow Y$ if the inclusion is continuous. The duality pairing between $C^\infty_c(\mathbb{R} \times \mathbb{T}^2)$ (compactly supported and smooth functions) and $\big(C^\infty_c(\mathbb{R} \times \mathbb{T}^2) \big)'$ is denoted by $\langle \cdot, \cdot\rangle$. The convolution of two functions in time is denoted by $*_t$, i.e., $f *_t g (s) = \int_{\mathbb{R}} f(s-r) g(r) \dd r$.

We denote by $C^{n+\kappa}_x$, $n \in \mathbb{N}_0 = \mathbb{N} \cup \{0\}$ and $\kappa \in [0,1)$, the space of $n$-times continuously differentiable functions with $\kappa$-H\"older continuous $n$-th derivative equipped with the norm 
\begin{align*}
    \norm{g}_{C^{n+\kappa}_x} =  \sum_{\abs{k} \leq n} \sup_{x \in \mathbb{T}^2} \abs{\partial^{k} g(x)}  + \sum_{\abs{k} = n} \sup_{x\neq y \in \mathbb{T}^2} \frac{\abs{\partial^{k} g(x) - \partial^{k} g(y)}}{\abs{x-y}^\kappa},
\end{align*}
where $\partial^k = \partial^{k_1} \partial^{k_2}$ for a multi index $k \in \mathbb{N}_0^2$.

Let $E$ be a Banach space and $\alpha \in [0,1)$. We define the Banach space of ($\alpha$-H\"older) continuous $E$-valued functions on $[t, t+1]$ by $C_t E$ and $C_t^\alpha E$, respectively. We equip them with the norms
\begin{align*}
\norm{g}_{C_t E} &:=  \sup_{s \in [t,t+1]} \norm{g(s)}_E, 
\end{align*}
and
\begin{align*}
\seminorm{g}_{C_t^\alpha E} &:= \sup_{s\neq s' \in [t,t+1]} \frac{\norm{g(s) - g(s')}_E}{\abs{s-s'}^\alpha}, \\
\norm{g}_{C_t^\alpha E} &:= \norm{g}_{C_t E} + \seminorm{g}_{C_t^\alpha E},
\end{align*}
respectively. Moreover, for any $T>0$, we define similarly $C_{[0, T]}$ and $C^\alpha_{[0, T]}$ via 
\begin{align*}
\norm{g}_{C_{[0, T]} E} &:=  \sup_{s \in [0,T]} \norm{g(s)}_E, 
\end{align*}
and
\begin{align*}
\seminorm{g}_{C_{[0, T]}^\alpha E} &:= \sup_{s\neq s' \in [0, T]} \frac{\norm{g(s) - g(s')}_E}{\abs{s-s'}^\alpha}, \\
\norm{g}_{C_{[0, T]}^\alpha E} &:= \norm{g}_{C_{[0, T]} E} + \seminorm{g}_{C_t^\alpha E},
\end{align*}

Let $(\Omega, \mathcal{F}, \mathbb{P})$ be a probability space. The space of strongly measurable $L^m$-integrable, $m \in [1,\infty]$, $E$-valued random variables is denoted by $L^m_\omega E$ equipped with the usual $L^m$-norm. Let $L^m_\omega C_{\mathrm{loc}} E$, respectively $L^m_\omega C^\alpha_{\mathrm{loc}} E$, denote the space of $L^{m}$-integrable locally ($\alpha$-H\"older) continuous $E$-valued functions equipped with the uniform time-localized (semi)-norms, $m \in [1,\infty)$, $\alpha \in (0,1)$, defined by
\begin{align*}
\norm{g}_{L^m_\omega C_{\mathrm{loc}}  E}^m &:= \sup_{t \in \mathbb{R}} \mathbb{E} \left[ \norm{g}_{C_t E}^m \right],  \\
\seminorm{g}_{L^m_\omega C_{\mathrm{loc}} ^\alpha E}^m &:= \sup_{t \in \mathbb{R}} \mathbb{E} \left[ \seminorm{g}_{C_t^\alpha E}^m \right], \\
\norm{g}_{L^{m}_\omega C_{\mathrm{loc}}^\alpha E}^m &:= \norm{g}_{L^m_\omega C_{\mathrm{loc}}  E}^m + \seminorm{g}_{L^m_\omega C_{\mathrm{loc}}^\alpha E}^m .
\end{align*}

The Fourier transform on $\mathbb{T}^2$ is defined by
\begin{align*}
 \mathbb{Z}^2 \ni k \mapsto   \widehat{f}(k) := \int_{\mathbb{T}^2}e^{ix \cdot k} f(x) \dd x.
\end{align*}

For $s \in \mathbb{R}$, we define $\dot{H}^{1/2} = \{ f: \int_{\mathbb{T}^2} f(x) \dd x = 0, \norm{\Lambda^s f}_{L^2(\mathbb{T}^2)} < \infty \}$ equipped with the norm 
\begin{align*}
    \norm{f}_{\dot{H}^{s}}^2 := \sum_{0 \neq k \in \mathbb{Z}^2} \abs{k}^{2s} \abs{\widehat{f}(k)}^2. 
\end{align*}

Let $(\Delta_j)_{j \geq -1}$ denote Littlewood-Paley blocks corresponding to a dyadic partition of unity. We define Besov spaces $B^{s}_{p,q}$, $s \in \mathbb{R}$, $p,q \in [1,\infty]$, as the completion of smooth functions with respect to the norm
\begin{align*}
    \norm{g}_{B^{s}_{p,q}} := \norm{ \{ 2^{j\alpha} \norm {\Delta_j u}_{L^p} \}_j }_{\ell^q}.
\end{align*}
Here $\norm{\cdot}_{\ell^q}$ denotes the usual $\ell^q$-sequence norm.

Let $\psi \in C^\infty_c(\mathbb{R}^2)$ satisfy $\psi(k) = 0$ for $\abs{k} \geq 1$ and $\psi(k) = 1$ for $\abs{k} \leq 1/2$. Define the frequency truncation $P_{\leq \lambda}$ for $\lambda > 0$ by 
\begin{align*}
    \widehat{P_{\leq \lambda}g} (k) = \psi\left( \frac{k}{\lambda} \right) \widehat{g}(k).
\end{align*}
Notice that $\norm{P_{\leq \lambda}g}_{L^\infty} \lesssim \norm{g}_{L^\infty}$ for all $g \in L^\infty$ and $\lambda \geq 1$. For more details on Besov spaces and the stability of frequency truncation we refer to~\cite[Section 1.3]{MR2848761}. We write $g \in B^{s-}_{p,q}$ if $g \in B^{s-\varepsilon}_{p,q}$ for any $\epsilon >0$ and $g \in B^{s+}_{p,q}$ if there exists $\varepsilon > 0$ such that $g \in B^{s+\varepsilon}_{p,q}$. 

For any vector fields $v,w$ we write $ v \, \mathring{\approx} \, w$ if there exists a smooth scalar function~$p$ such that $v = w + \nabla^\perp p$. 

We define, similarly to~\cite[(2.7)]{MR4340931}, the Riesz-type transforms $\mathcal{R}_j^o$, $j = 1,2$, by 
\begin{align*}
    \widehat{\mathcal{R}_1^o}(k) := \frac{25(k_2^2 - k_1^2)}{12 \abs{k}^2}, \hspace{2em} \widehat{\mathcal{R}_2^o}(k) := \frac{7(k_2^2 - k_1^2)}{12 \abs{k}^2} + \frac{4 k_1 k_2}{\abs{k}^2}
\end{align*}
and 
\begin{align*}
    \norm{q}_X := \norm{q}_{L^\infty} + \sum_{j=1}^2 \norm{\mathcal{R}_j^o q}_{L^\infty}.
\end{align*}

\section{Convex integration}
\label{convex int section}
We want to construct a solution as the limit of an iterative algorithm, i.e.,
\begin{align*}
g = \lim_{n\to \infty} g_{\leq n}, \hspace{3em} g_{\leq n} = g_{\leq n-1} + \tilde{g}_n,
\end{align*}
where $\tilde{g}_n$ are constructed in an explicit manner.
 
At first, the iterates~$g_{\leq n}$ do not need to satisfy~\eqref{intro:Main} exactly, but are allowed to carry a residual~$\tilde{q}_n$, i.e.,
\begin{align*}
-\partial_t \mathcal{R} g_{\leq n} + \nabla^{\perp} (g_{\leq n} + z) \Lambda (g_{\leq n} + z)-\nu \Lambda^{\gamma -1} \nabla g_{\leq n} \mathring{\approx}  \nabla \tilde{q}_n.
\end{align*}
The main difficulty (that enables convex integration) is to find an explicit update rule $\tilde{g}_n$ that corrects $g_{\leq n}$ in a way such that the magnitude of the correction is proportional to the residual. Ultimately, we hope for concentration of $g_{\leq N}$ on $g$ and a vanishing residual. 

Crucially, Cheng, Kwon and Li~\cite[Section~2.1]{MR4340931} found suitable building blocks to correct the iterates adapted to the SQG equation. They were modified to the stochastic setting by Hofmanová, Zhu and Zhu in~\cite{hofmanovaconvex4} (see also~\cite{hofmanovaconvex6}). 

We split our convex integration strategy into the following steps:
\begin{enumerate}
    \item construction and properties of explicit building blocks,
    \item propagation of residual magnitudes,
    \item an example that allows to close the argumentation.
\end{enumerate}

\subsection{Building blocks}
Let $(\lambda_n, r_n, \ell_n, \mu_n) \in (0,\infty)^4$ be control parameters. One can think of $\lambda_n (\nearrow \infty)$ measuring blow up and frequency truncation, $r_n (\searrow 0)$ concentration, $\ell_n (\searrow 0)$ convolution and $\mu_n (\nearrow \infty)$ frequency truncation on an intermediate scale, respectively. We will keep track on the requirements of the control parameters in order to close the argumentation. A possible choice can be found in~\ref{sec:ExampleControls}.

\subsubsection{Ansatz}
Let $z_n = P_{\leq \lambda_n} z$. We start with the Ansatz
\begin{subequations} \label{def:Init}
\begin{align} \label{def:InitG}
g_{\leq 0} &= 0, \\ \label{def:InitResidual}
\nabla q_0 &\,\, \mathring{\approx} \, \nabla^\perp z_0 \Lambda z_0,
\end{align}
\end{subequations}
and update recursively, for $n\geq 1$,
\begin{subequations} \label{def:Iteration}
\begin{align} \label{def:gLeQn}
g_{\leq n} &= g_{\ell_n} + g_{n}, \\ \label{eq:Residual}
 \nabla q_{n} &\,\, \mathring{\approx} -\partial_t \mathcal{R} g_{\leq n} + \nabla^{\perp} (g_{\leq n} + z_n) \Lambda (g_{\leq n} + z_n)- \nu \Lambda^{\gamma -1} \nabla g_{\leq n}.
\end{align}
\end{subequations}
Here the building blocks are defined by
\begin{subequations}
\begin{align} \label{def:Block01}
	g_{n} (t,x) &= \sum_{j=1}^2 a_{j, n}(t,x) \cos(5 \lambda_{n} l_j \cdot x),\\ \label{def:Block02}
	a_{j,n} &= 2\sqrt{\frac{\chi_{\ell_n}}{5 \lambda_{n}}}P_{\leq \mu_{n}}\sqrt{C_0 + \mathcal{R}_j^o \frac{q_{\ell_{n}}}{\chi_{\ell_n}}},
\end{align}
\end{subequations}
with 
\begin{subequations} \label{eq:MollifySteps}
\begin{align} 
	g_{\ell_n} &= g_{\leq n-1} *_t \psi_{\ell_{n}}, \\
    q_{\ell_n} &= q_{n-1} *_t \psi_{\ell_n}, \\
    \chi_{\ell_n} &= \norm{q_{n-1}}_X *_t \psi_{\ell_{n}} + r_{n-1},
\end{align}
\end{subequations}
where $\psi_\varepsilon (t)$ denotes a one-sided time mollifier, e.g., $ \psi_\varepsilon (t) = \varepsilon^{-1} \psi_0( \varepsilon^{-1} (t+2)) $ for some $\psi_0 \in C^\infty(\mathbb{R};[0,1])$, $\psi_0(t) = 1$ for $\abs{t} \leq 1$, $\psi_0(t) = 0$ for $\abs{t} \geq 2$ and $\int_{\mathbb{R}} \psi_0(t) \dd t = 1$. The free parameter $C_0 \geq 2$ can be used to derive non-uniqueness results as done in e.g.~\cite{MR4340931,hofmanovaconvex4,hofmanovaconvex6}.

Next, we derive bounds of the building blocks in terms of the control parameters and the residual. Let $m \geq 1$ be the probabilistic integrability, $\alpha > 0$ be the time H\"older regularity and $\kappa > 1/2$ be the space H\"older-regularity. Set
\begin{align} \label{def:ConstantPhi}
C_z := \norm{z}_{L^{2m}_\omega C_{\mathrm{loc}} C^{1+\kappa}_x} + \seminorm{z }_{L^{2m}_\omega C^\alpha_{\mathrm{loc}} C^{1+}_x}.
\end{align}
It turns out that $C_z < \infty$ is sufficient for finding suitable control parameters.

\subsubsection{Properties}
Similarly to~\cite[Remark~6.1]{hofmanovaconvex6} we find that $g_n$ is frequency localized in the annulus~$\{k: 5 \lambda_n - \mu_n \leq \abs{k} \leq 5 \lambda_n + \mu_n\}$. In particular, $g_n$ is mean-free. Since the mollifier in~\eqref{eq:MollifySteps} is one-sided, $g_n$ is adapted to the filtration $\mathcal{F}_t = \sigma( z_s | s \leq t ) $. We derive some further preparatory estimates for $g_{n}$. 

\subsubsection{Continuous scale}
Since $g_n$ is frequency localized
\begin{align*}
\norm{g_n(t)}_{B^{1/2}_{\infty,1}} &= \sum_{j \geq -1} 2^{j/2} \norm{\Delta_j g_n(t)}_{L^\infty} = \sum_{j:5 \lambda_n - \mu_n \leq 2^j \leq 5 \lambda_n + \mu_n} 2^{j/2} \norm{\Delta_j g_n(t)}_{L^\infty}. 
\end{align*}

In order to simplify the estimate we restrict to
\begin{align} \tag{R1} \label{R1}
\mu_n \leq \lambda_n.
\end{align}
Then $\{k: 5 \lambda_n - \mu_n \leq \abs{k} \leq 5 \lambda_n + \mu_n\} \subset \{k: 4 \lambda_n \leq \abs{k} \leq 6 \lambda_n\}$ and 
\begin{align*}
\sum_{j:5 \lambda_n - \mu_n \leq 2^j \leq 5 \lambda_n + \mu_n} 2^{j/2} \norm{\Delta_j g_n(t)}_{L^\infty} &\leq  (6 \lambda_n)^{1/2}\sum_{j: 4 \lambda_n \leq 2^j \leq 6 \lambda_n} \norm{\Delta_j g_n(t)}_{L^\infty} \\
&\lesssim \lambda_n^{1/2} \norm{g_n(t)}_{L^\infty}.
\end{align*}
Here we used that 
\begin{align*}
\sum_{j: 4 \lambda_n \leq 2^j \leq 6 \lambda_n}1 \lesssim 1,
\end{align*}
since $\# \{ j: 4\lambda_n \leq 2^j \leq 6 \lambda_n \} = \# \{ j: 2 + \log_2(\lambda_n) \leq j \leq \log_2(6) + \log_2(\lambda_n)\}$ and $C^0 \hookrightarrow B^0_{\infty,\infty}$ (see for example \cite[p.89]{Triebel1983}). Moreover, due to stability of the frequency truncation and $\norm{q_{\ell_n}}_X \leq \chi_{\ell_n}$,
\begin{align*}
 \norm{g_n(t)}_{L^\infty} &\leq \sum_{j=1}^2 \norm{a_{n,j}(t)}_{L^\infty} \\
&\lesssim \sqrt{\frac{\chi_{\ell_n}(t)}{5 \lambda_{n}}} \sum_{j=1}^2  \norm{C_0 + \mathcal{R}_j^o \frac{q_{\ell_n}(t)}{\chi_{\ell_n}(t)}}_{L^\infty}^{1/2} \\
&\lesssim \sqrt{\frac{\chi_{\ell_n}(t)}{ \lambda_{n}}}  \left(C_0+  \frac{\norm{ q_{\ell_n}(t)}_{X}}{\chi_{\ell_n}(t)} \right)^{1/2} \\
&\lesssim  \sqrt{\frac{\chi_{\ell_n}(t)}{\lambda_{n}}}  \left(C_0+ 1 \right)^{1/2}.
\end{align*}
Thus,
\begin{align*} 
\norm{g_n(t)}_{B^{1/2}_{\infty,1}}  \lesssim  \sqrt {(C_0+ 1) \chi_{\ell_n}(t) }.
\end{align*}
Take the supremum over $t \in [s,s+1]$, $L^{2m}_\omega$-norm and supremum in $s \in \mathbb{R}$,
\begin{align*}
\norm{g_n}_{L^{2m}_\omega C_{\mathrm{loc}} B^{1/2}_{\infty,1}}  \lesssim (C_0+ 1) \sup_{s \in \mathbb{R}} \left( \mathbb{E}\left[ \sup_{t \in [s,s+1]} \chi_{\ell_n}(t)^m  \right] \right)^{1/2m}.
\end{align*}
We restrict our setting to
\begin{align} \tag{R2} \label{R2}
\ell_n \leq 1.
\end{align}
Then $[s-\ell_n,s+1+\ell_n] \subset [s-1,s+2] = \cup_{k=0}^2 [s-1+k,s+k]$ and thus,
\begin{align} \label{eq:EstimateChi}
\sup_{t \in [s,s+1]} \chi_{\ell_n}(t) \leq \sup_{t \in [s - 1,s+2 ]} \norm{q_{n-1}(t)}_X + r_{n-1}.
\end{align}
Therefore,
\begin{align}\label{eq:EstimateBuildingBlock}
\norm{g_n}_{L^{2m}_\omega C_{\mathrm{loc}} B^{1/2}_{\infty,1}} \lesssim \sqrt{(C_0 + 1)( \norm{q_{n-1}}_{L^m_\omega C_{\mathrm{loc}} X} + r_{n-1})}.
\end{align}
Similarly,
\begin{align}\label{eq:EstimateBuildingBlock02}
\norm{g_n}_{L^{2m}_\omega C_{\mathrm{loc}} C^\vartheta_x} \lesssim \lambda_n^{\vartheta - 1/2} \sqrt{(C_0 + 1)( \norm{q_{n-1}}_{L^m_\omega C_{\mathrm{loc}} X} + r_{n-1})}.
\end{align}

\begin{remark}
The estimate~\eqref{eq:EstimateBuildingBlock} shows that the building blocks decay as a function of the residual. Moreover,~\eqref{eq:EstimateBuildingBlock02} allows for stronger spatial scales provided the residual decays sufficiently fast.
\end{remark}

\subsubsection{H\"older scale}
Let $s \neq s' \in [t, t+1]$. Analogously to previous section, one finds for increments
\begin{align*}
\norm{g_{n}(s) - g_n(s')}_{B^{1/2}_{\infty,1}} \lesssim \lambda_n^{1/2} \sum_{j=1}^2 \norm{a_{j,n}(s) - a_{j,n}(s')}_{L^\infty}.
\end{align*}
Using $\abs{\sqrt{a} - \sqrt{b}} \leq \sqrt{\abs{a-b}}$ and the stability of frequency truncation
\begin{align*}
\norm{a_{j,n}(s) - a_{j,n}(s')}_{L^\infty} &\lesssim \lambda_n^{-1/2}\sqrt{\abs{\chi_{\ell_n}(s) - \chi_{\ell_n}(s')}} \norm{\sqrt{C_0 + \mathcal{R}_j^o \frac{q_{\ell_{n}}(s)}{\chi_{\ell_n}(s)}} }_{L^\infty} \\
&\quad + \lambda_n^{-1/2} \sqrt{\chi_{\ell_n}(s')}\norm{\mathcal{R}_j^o \frac{q_{\ell_{n}}(s)}{\chi_{\ell_n}(s)} -\mathcal{R}_j^o \frac{q_{\ell_{n}}(s')}{\chi_{\ell_n}(s')}}_{L^\infty}^{1/2}.
\end{align*}
Notice that $\norm{\sqrt{C_0 + \mathcal{R}_j^o \frac{q_{\ell_{n}}(s)}{\chi_{\ell_n}(s)}} }_{L^\infty} \leq \sqrt{C_0 + 1}$ by definition of~$\chi_{\ell_n}$. Moreover, recall~\eqref{R2} and $\abs{s-s'}\leq 1$. Now, Young's inequality and properties of convolution ensure
\begin{align*}
&\abs{\chi_{\ell_n}(s) - \chi_{\ell_n}(s')} = \abs{\int \norm{q_{n-1}(\nu)}_X (\psi_{\ell_n}(s-\nu) - \psi_{\ell_n}(s'-\nu)) \dd \nu } \\
&\quad \leq \sup_{ \nu \in [s-\ell_n,s+\ell_n]\cup [s'-\ell_n,s'+\ell_n]} \norm{q_{n-1}(\nu)}_X  \int  \abs{\psi_{\ell_n}(s-\nu) - \psi_{\ell_n}(s'-\nu)} \dd \nu \\
&\quad \lesssim \sup_{\nu \in [t-1,t+3]} \norm{q_{n-1}(\nu)}_X \abs{s-s'}^{2\alpha} \ell_n^{-2\alpha}.
\end{align*}
Additionally,
\begin{align*}
&\norm{\mathcal{R}_j^o \frac{q_{\ell_{n}}(s)}{\chi_{\ell_n}(s)} -\mathcal{R}_j^o \frac{q_{\ell_{n}}(s')}{\chi_{\ell_n}(s')}}_{L^\infty} =  \frac{\norm{ \chi_{\ell_n}(s')  \mathcal{R}_j^o q_{\ell_{n}}(s)-\chi_{\ell_n}(s)\mathcal{R}_j^o q_{\ell_{n}}(s')}_{L^\infty}}{\chi_{\ell_n}(s) \chi_{\ell_n}(s')} \\
&\hspace{2em} \leq \frac{\abs{\chi_{\ell_n}(s') -\chi_{\ell_n}(s)}}{\chi_{\ell_n}(s')} \frac{\norm{\mathcal{R}_j^o q_{\ell_{n}}(s)}_{L^\infty}}{\chi_{\ell_n}(s) }  + \frac{\norm{ \mathcal{R}_j^o q_{\ell_{n}}(s)-\mathcal{R}_j^o q_{\ell_{n}}(s')}_{L^\infty}}{\chi_{\ell_n}(s')}\\
&\hspace{2em} \leq \frac{\abs{\chi_{\ell_n}(s') -\chi_{\ell_n}(s)}}{\chi_{\ell_n}(s')} + \frac{\norm{ q_{\ell_{n}}(s)- q_{\ell_{n}}(s')}_{X}}{\chi_{\ell_n}(s')}.
\end{align*}
Similarly to the estimate for increments of~$\chi_{\ell_n}$,
\begin{align*}
\norm{ q_{\ell_{n}}(s)- q_{\ell_{n}}(s')}_{X} &\leq \norm{\int q_{n-1}(\nu) (\psi_{\ell_n}(s-\nu) - \psi_{\ell_n}(s'-\nu)) \dd \nu }_X \\
&\lesssim  \sup_{\nu \in [t-1,t+3]} \norm{q_{n-1}(\nu)}_X \abs{s-s'}^{2\alpha} \ell_n^{-2\alpha}.
\end{align*}
Collecting all estimates shows 
\begin{align*}
\norm{a_{j,n}(s) - a_{j,n}(s')}_{L^\infty} \lesssim \left( (C_0 + 1) \lambda_n^{-1} \ell_n^{-2\alpha} \abs{s-s'}^{2\alpha} \sup_{\nu \in [t-1,t+3]} \norm{q_{n-1}(\nu)}_X \right)^{1/2}
\end{align*}
and
\begin{align}\label{eq:TimeSpace_Besov}
\seminorm{g_n}_{L^{2m}_\omega C^{\alpha}_{\mathrm{loc}} B^{1/2}_{\infty,1}} \lesssim \ell_n^{-\alpha} \sqrt{(C_0 + 1)  \norm{q_{n-1}}_{L^m_\omega C_{\mathrm{loc}} X} }.
\end{align}
Similarly,
\begin{align} \label{eq:TimeSpace_Hoelder}
\seminorm{g_n}_{L^{2m}_\omega C^{\alpha}_{\mathrm{loc}} C^\beta_x} \lesssim \lambda_{n}^{\beta - 1/2} \ell_n^{-\alpha} \sqrt{(C_0 + 1)  \norm{q_{n-1}}_{L^m_\omega C_{\mathrm{loc}} X}}.
\end{align}

\subsubsection{$C^1_t$ scale}
Take the time derivative in~\eqref{def:Block02}
\begin{align*}
	\partial_t a_{j,n} &= \frac{2}{\sqrt{5 \lambda_{n}}}\left(\frac{\partial_t\chi_{\ell_n}}{\sqrt{\chi_{\ell_n}}} P_{\leq \mu_{n}} \sqrt{C_0 + \mathcal{R}_j^o\frac{q_{\ell_{n}}}{\chi_{\ell_n}}} \right.\\
	&\left. + \sqrt{\chi_{\ell_n}}P_{\leq \mu_{n}} \left(C_0 + \mathcal{R}_j^o \frac{q_{\ell_{n}}}{\chi_{\ell_n}}\right)^{-\frac{1}{2}} \mathcal{R}_j^o\left(\frac{\partial_t q_{\ell_{n}}}{\chi_{\ell_n}} - \frac{q_{\ell_{n}}\partial_t\chi_{\ell_n}}{\chi_{\ell_n}^2}\right)\right).
\end{align*}
Notice, due to Jensen's inequality,
\begin{align*}
\abs{\mathcal{R}_j^0 q_{\ell_n}} \leq \norm{\mathcal{R}_j^0 q_{\ell_n}}_{L^\infty} \leq  \norm{q_{\ell_n}}_{X} \leq \norm{q_{n-1}}_X *_t \psi_{\ell_n} \leq \chi_{\ell_n}.
\end{align*}
Thus, $\mathcal{R}_j^0 q_{\ell_n} \chi_{\ell_n}^{-1}  \in [-1,1]$ and 
\begin{align*}
\norm{\sqrt{C_0 + \mathcal{R}_j^o\frac{q_{\ell_{n}}}{\chi_{\ell_n}}}}_{L^\infty} \leq \sqrt{C_0 + 1}, \hspace{2em} \norm{\left( C_0 + \mathcal{R}_j^o\frac{q_{\ell_{n}}}{\chi_{\ell_n}} \right)^{-1/2}}_{L^\infty} \leq \frac{1}{\sqrt{C_0 -1}}.
\end{align*}
This implies
\begin{align*}
	\|\partial_t a_{j,n}\|_{L^{\infty}} &\lesssim \lambda_{n}^{-\frac{1}{2}} \left(\frac{|\partial_t\chi_{\ell_n}|}{\sqrt{\chi_{\ell_n}}} \left( \sqrt{C_0 + 1} + \frac{1}{\sqrt{C_0 - 1}} \right) + \frac{1}{\sqrt{C_0 - 1}} \frac{\norm{\partial_t q_{\ell_n}}_X}{\sqrt{\chi_{\ell_n}}}  \right).
\end{align*}
Additionally, for $t \in [s,s+1]$
\begin{align*}
\abs{\partial_t \chi_{\ell_n}(t)} \vee \norm{\partial_t q_{\ell_n}(t)}_X &\leq \norm{q_{n-1}}_X *_t \abs{\partial_t \psi_{\ell_n}} \\
&\leq \sup_{t \in [s - 1, s+2]} \norm{q_{n-1}(t)}_X \norm{\partial_t \psi_{\ell_n}}_{L^1_t} \\
&\lesssim \sup_{t \in [s - 1, s+2]} \norm{q_{n-1}(t)}_X \ell_n^{-1}.
\end{align*}
All together, using $\chi_{\ell_n} \geq r_{n-1}$,
\begin{align} \label{eq:TimeDerivative}
\norm{\partial_t g_n}_{L^{m}_\omega C_{\mathrm{loc}} L^\infty} \lesssim \lambda_{n}^{-1/2} r_{n-1}^{-1/2} \ell_n^{-1} \norm{q_{n-1}}_{L^{m}_\omega C_{\mathrm{loc}} X} \left( \sqrt{C_0 + 1} + \frac{1}{\sqrt{C_0 - 1}} \right). 
\end{align}

\subsection{The residual in terms of control parameters}
The following proposition provides a quantified bound on the magnitude of the new residual in terms of the control parameters provided that the old residuals are controlled by the concentration parameter~$r_n$.

\begin{proposition}\label{prop:ResidualPropagation}
Let us assume that for $n \leq N$ the following conditions are satisfied: 
\begin{enumerate}
    \item (condition on control parameters)~\eqref{R1},~\eqref{R2} and
    \begin{align}\tag{R3} \label{R3}
\lambda_n \in \mathbb{N}, \hspace{2em} 12 \lambda_n \leq 4 \mu_{n+1}, \hspace{2em} 10 \leq \mu_n, \hspace{2em} 48 \lambda_n \leq \lambda_{n+1};
\end{align}
\item (control on past residuals)
\end{enumerate}
\begin{align}\label{ass:Induction} 
\norm{q_{n}}_{L^m_\omega C_{\mathrm{loc}} X} \leq r_n.
\end{align}

Then there exists a constant $C_{\mathrm{it}} \geq 1$ (independent of $N$) such that
\begin{align} \label{eq:ResidualPropagation}
\begin{aligned}
\norm{q_{N+1}}_{L^m_\omega C_{\mathrm{loc}} X} &\leq C_{\mathrm{it}}\big( \mathfrak{E}_{N+1}^{\mathrm{miss}} + \mathfrak{E}_{N+1}^{\mathrm{com}} + \mathfrak{E}_{N+1}^{\mathrm{time}} + \mathfrak{E}_{N+1}^{\mathrm{dis}} + \mathfrak{E}_{N+1}^{\mathrm{trans}} + \mathfrak{E}_{N+1}^{\mathrm{sto}}\big),
\end{aligned}
\end{align}
where 
\begin{subequations} \label{Error:Global}
    \begin{align} \label{Error:Miss}
        \mathfrak{E}_{N+1}^{\mathrm{miss}} &=  \left\{ \log \mu_{N+1} \left( \frac{\lambda_N}{\mu_{N+1}}  + \frac{\mu_{N+1}}{\lambda_{N+1}} \right)^2 + \frac{\lambda_N}{\lambda_{N+1}} \right\}r_{N}, \\ \label{Error:Com}
        \mathfrak{E}_{N+1}^{\mathrm{com}} &= \log(\lambda_{N+1}) \lambda_N \ell_{N+1}^{\alpha}\mathfrak{S}_{N,\alpha} \mathfrak{S}_{N,0} \\ \nonumber
&\quad+  \log(\lambda_{N+1}) \lambda_N^{1/2} \left( \ell_{N+1}^{\alpha}\mathfrak{S}_{N,\alpha} + \lambda_N^{-\kappa} \mathfrak{S}_{N,0} \right)  C_z \\ \nonumber
&\quad + \log(\lambda_{N+1}) (\ell_{N+1}^{\alpha}+ \lambda_N^{-\kappa} ) C_z^2, \\ \label{Error:Time}
\mathfrak{E}_{N+1}^{\mathrm{time}} &= \log(\lambda_{N+1}) \lambda_{N+1}^{-3/2} r_N^{1/2} \ell_{N+1}^{-1}, \\ \label{Error:Dissi}
\mathfrak{E}_{N+1}^{\mathrm{dis}} &= \log(\lambda_{N+1}) \abs{\nu} \lambda_{N+1}^{\gamma-3/2}   r_{N}^{1/4}, \\ \label{Error:Trans}
\mathfrak{E}_{N+1}^{\mathrm{trans}}  &= \log(\lambda_{N+1})\lambda_{N}^{1/2} \lambda_{N+1}^{-1/2}  r_N^{1/2} \mathfrak{S}_{N,0}, \\ \label{Error:Sto}
\mathfrak{E}_{N+1}^{\mathrm{sto}} &= \log(\lambda_{N+1}) \lambda_{N+1}^{-1/2}  r_N^{1/2} C_z.
    \end{align}
\end{subequations}
Here $C_z$ is given by~\eqref{def:ConstantPhi} and 
\begin{align}\label{def:SumDec}
\mathfrak{S}_{N,\alpha} := \sum_{n=1}^{N} 4^{N-n} \ell_n^{-\alpha} r_{n-1}^{1/2}.
\end{align}
\end{proposition}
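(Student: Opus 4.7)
\emph{Strategy.} The plan is to express $\nabla q_{N+1}$ as an explicit sum of six source terms, one per error $\mathfrak{E}_{N+1}^{\bullet}$, recover $q_{N+1}$ via an anti-divergence operator $\mathcal{B}$ which is bounded from mean-zero frequency-localized inputs at scale $\lesssim\lambda_{N+1}$ into $X$ with operator-norm growing as $\log\lambda_{N+1}$, and then estimate each source in $L^m_\omega C_{\mathrm{loc}}X$ using the building-block bounds from the preceding subsection together with the induction hypothesis~\eqref{ass:Induction}. Concretely, subtracting the temporal mollification of~\eqref{eq:Residual} at level $N$ from~\eqref{eq:Residual} at level $N+1$, using $g_{\leq N+1}-g_{\ell_{N+1}}=g_{N+1}$ and expanding the bilinear $\nabla^\perp(\cdot)\Lambda(\cdot)$, one obtains modulo gradients
\[
\nabla q_{N+1}\,\mathring{\approx}\,\bigl(\nabla^\perp g_{N+1}\Lambda g_{N+1}+\nabla q_{\ell_{N+1}}\bigr)-\partial_t\mathcal{R}g_{N+1}-\nu\Lambda^{\gamma-1}\nabla g_{N+1}+\mathrm{Tr}+\mathrm{St}+\mathrm{Com},
\]
where $\mathrm{Tr}=\nabla^\perp g_{N+1}\Lambda g_{\ell_{N+1}}+\nabla^\perp g_{\ell_{N+1}}\Lambda g_{N+1}$, $\mathrm{St}=\nabla^\perp g_{N+1}\Lambda z_{N+1}+\nabla^\perp z_{N+1}\Lambda g_{N+1}$, and $\mathrm{Com}$ collects the mollification commutator (together with its noise variants) and the truncation gap $z_{N+1}-z_N$. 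These six groups produce $\mathfrak{E}^{\mathrm{miss}}, \mathfrak{E}^{\mathrm{time}}, \mathfrak{E}^{\mathrm{dis}}, \mathfrak{E}^{\mathrm{trans}}, \mathfrak{E}^{\mathrm{sto}}, \mathfrak{E}^{\mathrm{com}}$, respectively.

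\emph{Bounds.} For the self-interaction $\nabla^\perp g_{N+1}\Lambda g_{N+1}+\nabla q_{\ell_{N+1}}$ I use $\cos^2(\xi)=\tfrac12(1+\cos(2\xi))$ together with the defining identity of the amplitudes~\eqref{def:Block02}: the low-frequency part of $\nabla^\perp g_{N+1}\Lambda g_{N+1}$ exactly cancels $-\nabla q_{\ell_{N+1}}$ up to a quadratic truncation error at scale $\mu_{N+1}$ (contributing $\lambda_N/\mu_{N+1}$ through high-high to low Littlewood--Paley analysis) and a high-oscillating remainder at frequency $10\lambda_{N+1}l_j$ (gaining $\mu_{N+1}/\lambda_{N+1}$ after $\mathcal{B}$), producing~\eqref{Error:Miss}. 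The time-derivative and dissipation terms are handled directly by~\eqref{eq:TimeDerivative} and~\eqref{eq:EstimateBuildingBlock}, respectively, together with the frequency localization of $g_{N+1}$ near $\lambda_{N+1}$ (gaining $\lambda_{N+1}^{-1}$ from $\mathcal{B}$ in the first case and $\lambda_{N+1}^{\gamma-3/2}$ from the dissipative prefactor in the second). Both $\mathrm{Tr}$ and $\mathrm{St}$ are estimated via~\eqref{eq:EstimateBuildingBlock02} with an $\mathcal{B}$-gain of $\lambda_{N+1}^{-1/2}$; the transport term uses the summed spatial regularity $\lambda_N^{1/2}\mathfrak{S}_{N,0}$ of $g_{\ell_{N+1}}$ while the stochastic term uses $C_z$. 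Finally, $\mathrm{Com}$ is controlled via standard mollification estimates together with~\eqref{eq:TimeSpace_Besov}--\eqref{eq:TimeSpace_Hoelder}, yielding the factor $\ell_{N+1}^\alpha\mathfrak{S}_{N,\alpha}$, and via $\norm{z_{N+1}-z_N}_{C^1_x}\lesssim\lambda_N^{-\kappa}C_z$ obtained from the $C^{1+\kappa}_x$-regularity of $z$.

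\emph{Main obstacle.} The delicate step is quantifying the self-interaction cancellation: one must track precisely how the intermediate-scale truncation $P_{\leq\mu_{N+1}}$ inside~\eqref{def:Block02} interacts with the quadratic nonlinearity, so that the low-frequency mismatch and the high-oscillating remainder are indeed balanced by $\lambda_N/\mu_{N+1}+\mu_{N+1}/\lambda_{N+1}$. This balance dictates the choice of the intermediate scale $\mu_{N+1}$ and is the source of the geometric growth conditions~\eqref{R3}. The logarithmic prefactor $\log\lambda_{N+1}$ ubiquitous in~\eqref{Error:Global} arises from converting $L^\infty$-bounds on frequency-localized outputs into the $X$-norm (which combines $L^\infty$ with the two Riesz-type transforms $\mathcal{R}_j^o$) through dyadic summation, using $C^0\hookrightarrow B^0_{\infty,\infty}$ exactly as in the preceding building-block estimates.
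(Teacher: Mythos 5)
Your proposal reproduces the paper's proof: the same decomposition of $\nabla q_{N+1}$, obtained by mollifying the level-$N$ equation at scale $\ell_{N+1}$, into the mismatch part $\nabla q_M = \nabla q_{\ell_{N+1}} + \nabla^\perp g_{N+1}\Lambda g_{N+1}$, the commutator $\mathrm{R_{com}}$ (split into truncation gap, mollification gap and temporal commutator), and the time, dissipation, transport and stochastic contributions, each then estimated in $L^m_\omega C_{\mathrm{loc}}X$ using the building-block bounds~\eqref{eq:EstimateBuildingBlock}--\eqref{eq:TimeDerivative} together with the telescoped bounds~\eqref{eq:SumBoth} and the induction hypothesis~\eqref{ass:Induction}. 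One minor correction to your closing remark: the ubiquitous $\log\lambda_{N+1}$ factor does not arise from a dyadic summation via $C^0\hookrightarrow B^0_{\infty,\infty}$ (that embedding is used earlier to pass from $B^{1/2}_{\infty,1}$ to $L^\infty$); it is supplied by the quantitative multiplier estimate of Lemma~\ref{lem:LinfQuantified}, which bounds the degree-zero homogeneous multipliers (the Riesz-type transforms $\mathcal{R}^o_j$ and $\Delta^{-1}\nabla\cdot$ applied to frequency-truncated inputs at scale $\mu$) in $L^\infty\to L^\infty$ operator norm by $C\log(\mu)$, and it is then combined with Poincaré's inequality to extract the anti-divergence gain $\lambda_{N+1}^{-1}$.
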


Proposition~\ref{prop:ResidualPropagation} can be used recursively provided that $r_{N+1}$ dominates the right hand side of~\eqref{eq:ResidualPropagation} and the control parameter obey~\eqref{R1},~\eqref{R2} and~\eqref{R3} also at later times. The former is trivially the case if the right hand side defines $r_{N+1}$. However, only if $r_N$ vanishes asymptotically one can hope for a limit $g = \lim_N g_{\leq N}$ that satisfies~\eqref{eq:SolutionConcept}. Finding a suitable family of controls that achieves this is a difficult task. One needs to carefully balance the blow up of $(\lambda_n, \mu_n)$ and the convergence of $(\ell_n, r_n)$. We present a possible choice in Section~\ref{sec:ExampleControls}.

\subsection{Proof of Proposition~\ref{prop:ResidualPropagation}}
The proof proceeds as the one of~\cite[Proposition~5.1]{hofmanovaconvex6}. See also~\cite[Proposition~3.1]{hofmanovaconvex4} for more details. However, contrary to~\cite[Proposition~5.1]{hofmanovaconvex6} we have not yet fixed an explicit choice for the control parameters. 

\subsubsection{Control of $g_{\leq N+1}$ in terms of building blocks}
By definition~\eqref{def:gLeQn} we have full control on the next step $g_{\leq N+1}$ in terms of the previous step and the explicit building block $g_{N+1}$. This allows for the recursive estimate (recall~\eqref{R2})
\begin{align*}
\seminorm{g_{\leq N+1}}_{L^{2m}_\omega C_{\mathrm{loc}}^\alpha B^{1/2}_{\infty,1} } &\leq \seminorm{g_{\ell_{N+1}} }_{L^{2m}_\omega C_{\mathrm{loc}}^\alpha B^{1/2}_{\infty,1} } + \seminorm{ g_{N+1}}_{L^{2m}_\omega C_{\mathrm{loc}}^\alpha B^{1/2}_{\infty,1} }\\
&\leq 4 \seminorm{g_{\leq N} }_{L^{2m}_\omega C_{\mathrm{loc}}^\alpha B^{1/2}_{\infty,1} } + \seminorm{ g_{N+1}}_{L^{2m}_\omega C_{\mathrm{loc}}^\alpha B^{1/2}_{\infty,1} } \\
&\leq \sum_{n=1}^{N+1} 4^{N+1-n} \seminorm{ g_{n}}_{L^{2m}_\omega C_{\mathrm{loc}}^\alpha B^{1/2}_{\infty,1} }.
\end{align*}
Using~\eqref{eq:TimeSpace_Besov}, respectively~\eqref{eq:EstimateBuildingBlock}, and~\eqref{ass:Induction} 
\begin{subequations} \label{eq:SumBoth}
\begin{align} \label{eq:SumAlpha}
\seminorm{g_{\leq N+1}}_{L^{2m}_\omega C_{\mathrm{loc}}^\alpha B^{1/2}_{\infty,1} } &\lesssim \sqrt{(C_0 + 1) }\mathfrak{S}_{N+1,\alpha} , \\ \label{eq:SumZero}
\norm{g_{\leq N+1}}_{L^{2m}_\omega C_{\mathrm{loc}} B^{1/2}_{\infty,1} } &\lesssim \sqrt{(C_0 + 1) }\mathfrak{S}_{N+1,0},
\end{align}
\end{subequations}
where $\mathfrak{S}_{N,\alpha}$ is defined by~\eqref{def:SumDec}.

\subsubsection{Decomposition of new residual}
A mollification of~\eqref{eq:Residual} on the scale~$\ell_{N+1}$ at level $N$ yields
\begin{align} \label{eq:Mollification}
\nabla q_{\ell_{N+1}}  \mathring{\approx}  -\partial_t \mathcal{R} g_{\ell_{N+1}} + \nabla^{\perp} (g_{\ell_{N+1}} + z_{N+1}) \Lambda (g_{\ell_{N+1}} + z_{N+1}) -\nu \Lambda^{\gamma -1} \nabla g_{\ell_{N+1}} -\mathrm{R_{com}},
\end{align}
where
\begin{align*}
\mathrm{R_{com}} &= \nabla^{\perp} (g_{\ell_{N+1}} + z_{N+1}) \Lambda (g_{\ell_{N+1}} + z_{N+1}) \\
&\quad - \nabla^{\perp} (g_{\leq N} + z_{N}) \Lambda (g_{\leq N} + z_{N})\big) *_t \psi_{\ell_{N+1}}.
\end{align*}
Using the explicit update rule~\eqref{def:gLeQn} in~\eqref{eq:Residual}
\begin{align*}
\nabla q_{N+1} &\mathring{\approx} -\partial_t \mathcal{R} (g_{\ell_{N+1}} + g_{N+1} ) + \nabla^{\perp} (g_{\ell_{N+1}} + g_{N+1} + z_{N+1}) \Lambda (g_{\ell_{N+1}} + g_{N+1}  + z_{N+1})\\
&\hspace{4em}- \nu \Lambda^{\gamma -1} \nabla (g_{\ell_{N+1}} + g_{N+1} ) \\
&= \underbrace{-\partial_t \mathcal{R} g_{\ell_{N+1}} + \nabla^{\perp} (g_{\ell_{N+1}} + z_{N+1}) \Lambda (g_{\ell_{N+1}} + z_{N+1})- \nu \Lambda^{\gamma -1} \nabla g_{\ell_{N+1}} }_{\mathring{\approx} \nabla q_{\ell_{N+1}} + \mathrm{R_{com}} \text{ by~\eqref{eq:Mollification}} } + \nabla^{\perp} g_{N+1} \Lambda g_{N+1} \\
&\quad -\partial_t \mathcal{R} g_{N+1}  +  \nabla^{\perp} (g_{\ell_{N+1}} + g_{N+1} + z_{N+1}) \Lambda (g_{\ell_{N+1}} + g_{N+1}  + z_{N+1})- \nu \Lambda^{\gamma -1} \nabla g_{N+1} 
\\
&\quad -  \nabla^{\perp} (g_{\ell_{N+1}} + z_{N+1}) \Lambda (g_{\ell_{N+1}} + z_{N+1}) -   \nabla^{\perp} g_{N+1} \Lambda g_{N+1}  \\
&\mathring{\approx} \underbrace{\nabla q_{\ell_{N+1}} +  \nabla^{\perp} g_{N+1} \Lambda g_{N+1} }_{\mathring{\approx} \nabla q_M} +  \mathrm{R_{com}} -\partial_t \mathcal{R} g_{N+1}  - \nu \Lambda^{\gamma -1} \nabla g_{N+1}  \\
&\quad + \underbrace{\nabla^{\perp} g_{N+1} \Lambda g_{\ell_{N+1}}+  \nabla^{\perp} g_{\ell_{N+1}} \Lambda g_{N+1} }_{\mathring{\approx} \nabla q_T} +  \underbrace{\nabla^{\perp} g_{N+1} \Lambda z_{N+1} +  \nabla^{\perp} z_{N+1}   \Lambda g_{N+1} }_{\mathring{\approx} \nabla q_S}.
\end{align*}
Notice that $\nabla q_{N+1}$ is only defined up to a smooth scalar function $\nabla^\perp p$. Therefore, we subtract the mean-value of each summand on the right hand side without changing the notation. On mean-free functions we can invert the gradient $\Delta^{-1} \nabla \cdot = \nabla^{-1} $.

\subsubsection{Mismatch error}
Due to~\eqref{R3} and~\eqref{R1}, we can follow the line of argumentation used in~\cite[Proposition~3.1]{MR4340931} to derive
\begin{align*}
\norm{q_M}_{X} \lesssim \left\{ \log \mu_{N+1} \left( \frac{\lambda_N}{\mu_{N+1}}  + \frac{\mu_{N+1}}{\lambda_{N+1}} \right)^2 + \frac{\lambda_N}{\lambda_{N+1}} \right\}(C_0 + 1) \chi_{\ell_{N+1}} .
\end{align*}
This together with~\eqref{eq:EstimateChi} gives

\begin{align*}
\norm{q_M}_{L^m C_{\mathrm{loc}} X} &\lesssim \left\{ \log \mu_{N+1} \left( \frac{\lambda_N}{\mu_{N+1}}  + \frac{\mu_{N+1}}{\lambda_{N+1}}\right)^2 + \frac{\lambda_N}{\lambda_{N+1}} \right\}(C_0 + 1) r_{N}.
\end{align*}

\subsubsection{Commutator error} We decompose the commutator error
\begin{align*}
\mathrm{R_{com}} &= \left( \nabla^{\perp} (z_{N}-z_{N+1}) \Lambda (g_{\leq N} + z_{N+1})+ \nabla^\perp(g_{\leq N} + z_{N}) \Lambda (z_{N}-z_{N+1}) \right) *_t \psi_{\ell_{N+1}} \\
&+ \nabla^{\perp} (g_{\ell_{N+1}}- g_{\leq N}) \Lambda (g_{\leq N} + z_{N+1}) + \nabla^{\perp} (g_{\ell_{N+1}} + z_{N+1}) \Lambda (g_{\ell_{N+1}} -g_{\leq N}) \\
&+ \nabla^{\perp} (g_{\leq N} + z_{N+1}) \Lambda (g_{\leq N} + z_{N+1})  \\
&\hspace{3em} - \big( \nabla^{\perp} (g_{\leq N} + z_{N+1}) \Lambda (g_{\leq N} + z_{N+1})\big) *_t \psi_{\ell_{N+1}} \\
&=: \mathrm{R}_{\mathrm{com}}^1 + \mathrm{R}_{\mathrm{com}}^2 + \mathrm{R}_{\mathrm{com}}^3.
\end{align*}
Notice that $ \mathrm{R}_{\mathrm{com}}^i$, $i \in \{1,2,3\}$, is frequency localized to~$\{k: \abs{k} \leq 2\lambda_{N+1}\}$. Thus, by Lemma~\ref{lem:LinfQuantified}, Poincaré's and H\"older's inequalities
\begin{align*}
&\norm{\Delta^{-1} \nabla \cdot \mathrm{R}_{\mathrm{com}}^1}_{L^m_\omega C_{\mathrm{loc}} X} \lesssim \log(\lambda_{N+1}) \norm{ \nabla^{\perp} (z_{N}-z_{N+1}) \Lambda( g_{\leq N} + z_{N+1}) }_{L^m_\omega C_{\mathrm{loc}} L^\infty} \\
&\hspace{10em} + \log(\lambda_{N+1})   \norm{\nabla^\perp( g_{\leq N}+ z_{N}) \Lambda (z_{N}-z_{N+1}) ]  }_{L^m_\omega C_{\mathrm{loc}} L^\infty} \\
&\hspace{3em}\lesssim  \log(\lambda_{N+1}) \left( \lambda_N^{1/2} \norm{g_{\leq N}}_{L^{2m}_\omega C_{\mathrm{loc}} B^{1/2}_{\infty,1}} + \norm{z}_{L^{2m}_\omega C_{\mathrm{loc}} C^{1+}_x} \right) \lambda_N^{-\kappa} \norm{z}_{L^{2m}_\omega C_{\mathrm{loc}} C^{1+\kappa}_x}.
\end{align*}
Let us stress at this point that it is here, where we require the condition $\kappa>1/2$ to hold in order to compensate for the diverging factor $\lambda_N^{1/2}$ above. This explains the additional regularity requirement expressed in $\kappa$ compared with \cite{hofmanovaconvex6}, where a more detailed analysis of the noise term is performed (however, this more detailed analysis exploits the Gaussianity of the underlying noise $W$ and thus does not extend to the setting of more general noises we are concerned with here).
Similarly,
\begin{align*}
\norm{\Delta^{-1} \nabla \cdot \mathrm{R}_{\mathrm{com}}^2}_{L^m_\omega C_{\mathrm{loc}} X}&\lesssim \log(\lambda_{N+1}) \lambda_N^{1/2} \ell_{N+1}^{\alpha} \seminorm{g_{\leq N}}_{L^{2m}_\omega C_{\mathrm{loc}}^\alpha B^{1/2}_{\infty,1}} \\
&\quad \times \left( \lambda_N^{1/2} \norm{g_{\leq N}}_{L^{2m}_\omega C_{\mathrm{loc}} B^{1/2}_{\infty,1}} + \norm{z}_{L^{2m}_\omega C_{\mathrm{loc}} C^{1+}_x} \right)
\end{align*}
and
\begin{align*}
&\norm{\Delta^{-1} \nabla \cdot \mathrm{R}_{\mathrm{com}}^3}_{L^m_\omega C_{\mathrm{loc}} X} \\
&\hspace{2em} \lesssim \log(\lambda_{N+1}) \ell_{N+1}^{\alpha} \seminorm{\nabla^{\perp} (g_{\leq N} + z_{N+1}) \Lambda (g_{\leq N} + z_{N+1}) }_{L^m_\omega C_{\mathrm{loc}}^\alpha L^\infty} \\
&\hspace{2em} \leq \log(\lambda_{N+1}) \ell_{N+1}^{\alpha} 
\norm{\nabla^{\perp} (g_{\leq N} + z_{N+1})}_{L^{2m}_\omega C_{\mathrm{loc}} L^\infty}  \seminorm{\Lambda (g_{\leq N} + z_{N+1}) }_{L^{2m}_\omega C_{\mathrm{loc}}^\alpha L^\infty} \\
&\hspace{2em}  \quad + \log(\lambda_{N+1}) \ell_{N+1}^{\alpha} 
\norm{\Lambda (g_{\leq N} + z_{N+1})}_{L^{2m}_\omega C_{\mathrm{loc}} L^\infty}  \seminorm{ \nabla^{\perp} (g_{\leq N} + z_{N+1})}_{L^{2m}_\omega C_{\mathrm{loc}}^\alpha L^\infty} \\
&\hspace{2em}  \leq \log(\lambda_{N+1}) \ell_{N+1}^{\alpha} \left( \lambda_N^{1/2}
\norm{ g_{\leq N}}_{L^{2m}_\omega C_{\mathrm{loc}} B^{1/2}_{\infty,1}} + \norm{z}_{L^{2m}_\omega C_{\mathrm{loc}} C^{1+}_x} \right)  \\
&\hspace{5em} \times \left( \lambda_N^{1/2} \seminorm{g_{\leq N}}_{L^{2m}_\omega C_{\mathrm{loc}}^\alpha B^{1/2}_{\infty,1}} + \seminorm{z }_{L^{2m}_\omega C_{\mathrm{loc}}^\alpha C^{1+}_x} \right).
\end{align*}

All together, using~\eqref{eq:SumBoth},
\begin{align*}
\norm{\Delta^{-1} \nabla \cdot \mathrm{R}_{\mathrm{com}}}_{L^m_\omega C_{\mathrm{loc}} X} &\lesssim \log(\lambda_{N+1}) \lambda_N \ell_{N+1}^{\alpha}(C_0 + 1)\mathfrak{S}_{N,\alpha} \mathfrak{S}_{N,0} \\
&\quad+  \log(\lambda_{N+1}) \lambda_N^{1/2} \sqrt{(C_0 + 1) }\left( \ell_{N+1}^{\alpha}\mathfrak{S}_{N,\alpha} + \lambda_N^{-\kappa} \mathfrak{S}_{N,0} \right)  C_z \\
&\quad + \log(\lambda_{N+1}) (\ell_{N+1}^{\alpha}+ \lambda_N^{-\kappa} ) C_z^2.
\end{align*}

\subsubsection{Time error}
Since $g_{N+1}$ is frequency localized to an annulus of size~$\lambda_{N+1}$
\begin{align*}
\norm{\Delta^{-1} \nabla \cdot \partial_t \mathcal{R} g_{N+1}}_{L^m_\omega C_{\mathrm{loc}} X} &\lesssim  \log(\lambda_{N+1}) \lambda_{N+1}^{-1} \norm{\partial_t g_{N+1}}_{L^m_\omega C_{\mathrm{loc}} L^\infty}.
\end{align*}
Using~\eqref{eq:TimeDerivative} and~\eqref{ass:Induction}
\begin{align*}
\norm{\Delta^{-1} \nabla \cdot \partial_t \mathcal{R} g_{N+1}}_{L^m_\omega C_{\mathrm{loc}} X} \lesssim  \log(\lambda_{N+1}) \lambda_{N+1}^{-3/2} r_N^{1/2} \ell_{N+1}^{-1}\left( \sqrt{C_0 + 1} + \frac{1}{\sqrt{C_0 - 1}} \right).
\end{align*}

\subsubsection{Dissipation error}
As $g_{N+1}$ is frequency localized to an annulus of size~$\lambda_{N+1}$
\begin{align*}
\norm{\Delta^{-1} \nabla \cdot \nu \Lambda^{\gamma -1} \nabla g_{N+1} }_{L^m C_{\mathrm{loc}} X} \lesssim \log(\lambda_{N+1}) \abs{\nu} \lambda_{N+1}^{\gamma-3/2}  \norm{g_{N+1}}_{L^m_\omega C_{\mathrm{loc}} B^{1/2}_{\infty,1}}.
\end{align*}
Using H\"older's inequality,~\eqref{eq:EstimateBuildingBlock} and~\eqref{ass:Induction}
\begin{align*}
\norm{\Delta^{-1} \nabla \cdot \nu \Lambda^{\gamma -1} \nabla g_{N+1} }_{L^m C_{\mathrm{loc}} X} \lesssim \log(\lambda_{N+1}) \abs{\nu}\lambda_{N+1}^{\gamma-3/2} \left( (C_0 + 1) r_{N} \right)^{1/4}.
\end{align*}

\subsubsection{Transport error}
Notice that $\nabla^{\perp} g_{N+1} \Lambda g_{\ell_{N+1}}$ and $ \nabla^{\perp} g_{\ell_{N+1}} \Lambda g_{N+1}$ are frequency localized to an annulus of size~$\lambda_{N+1}$. H\"older's inequality,~\eqref{eq:EstimateBuildingBlock} and~\eqref{eq:SumZero} imply

\begin{align*}
&\norm{\Delta^{-1} \nabla \cdot[\nabla^{\perp} g_{N+1} \Lambda g_{\ell_{N+1}}+  \nabla^{\perp} g_{\ell_{N+1}} \Lambda g_{N+1}]}_{L^m_\omega C_{\mathrm{loc}} X} \\
&\quad \lesssim \log(\lambda_{N+1}) \lambda_{N+1}^{-1} \norm{\nabla^{\perp} g_{N+1} \Lambda g_{\ell_{N+1}}+  \nabla^{\perp} g_{\ell_{N+1}} \Lambda g_{N+1}}_{L^m_\omega C_{\mathrm{loc}} L^\infty} \\
&\quad \leq \log(\lambda_{N+1})\lambda_{N}^{1/2} \lambda_{N+1}^{-1/2} \norm{g_{N+1}}_{L^{2m}_\omega C_{\mathrm{loc}} B^{1/2}_{\infty,1}} \norm{g_{\ell_{N+1}}}_{L^{2m}_\omega C_{\mathrm{loc}} B^{1/2}_{\infty,1}} \\
&\quad \lesssim \log(\lambda_{N+1})\sqrt{C_0 + 1}  \lambda_{N}^{1/2} \lambda_{N+1}^{-1/2}  r_N^{1/2} \norm{g_{\leq N}}_{L^{2m}_\omega C_{\mathrm{loc}} B^{1/2}_{\infty,1}} \\
&\quad \lesssim \log(\lambda_{N+1})(C_0 + 1)  \lambda_{N}^{1/2} \lambda_{N+1}^{-1/2}  r_N^{1/2} \mathfrak{S}_{N,0}.
\end{align*}

\subsubsection{Stochastic error} 
Since $\nabla^{\perp} g_{N+1} \Lambda z_{N+1}$ and $\nabla^{\perp} z_{N+1}   \Lambda g_{N+1}$ are frequency localized to an annulus of size~$\lambda_{N+1}$, H\"older's inequality and~\eqref{eq:EstimateBuildingBlock}
\begin{align*}
&\norm{\Delta^{-1} \nabla \cdot [\nabla^{\perp} g_{N+1} \Lambda z_{N+1} +  \nabla^{\perp} z_{N+1}   \Lambda g_{N+1}]}_{L^m_\omega C_{\mathrm{loc}} X} \\
&\quad \lesssim \log(\lambda_{N+1}) \lambda_{N+1}^{-1} \norm{\nabla^{\perp} g_{N+1} \Lambda z_{N+1} +  \nabla^{\perp} z_{N+1}   \Lambda g_{N+1} }_{L^{m}_\omega C_{\mathrm{loc}} L^\infty} \\
&\quad \lesssim \log(\lambda_{N+1}) \lambda_{N+1}^{-1/2}  \norm{g_{N+1}}_{L^{2m}_\omega C_{\mathrm{loc}} B^{1/2}_{\infty,1}} \norm{z}_{L^{2m}_\omega C_{\mathrm{loc}} C^{1+}_x} \\
&\quad \lesssim \log(\lambda_{N+1}) \sqrt{C_0 + 1}  \lambda_{N+1}^{-1/2}  r_N^{1/2} C_z.
\end{align*}

Collecting all estimates and choosing $C_\mathrm{it}$ as the maximum of all implicit constants and the terms depending on $C_0$ verifies the claim. From now on, we will no longer indicate the dependence of $C_\mathrm{it}$ on $C_0$.

\subsection{An explicit choice of controls} \label{sec:ExampleControls}
Let $a, b \in \mathbb{N}$, $\beta \in (0,1)$ and define 
\begin{subequations}\label{def:Parameter}
\begin{alignat}{3} 
  &\forall n \geq 0: \hspace{2em} \lambda_n  &&=  a^{(b^n)} ,\hspace{2em} &&r_n = C_{\mathrm{start}}(C_z^2 + 1) \lambda_0^{\beta} \lambda_n^{-\beta}, \\
  &\forall n \geq 1: \hspace{2em} \ell_n &&= \lambda_n^{-1}, \hspace{2em}  &&\mu_n = \sqrt{\lambda_{n-1}\lambda_n},
\end{alignat}
\end{subequations}
where $C_z$ is given by~\eqref{def:ConstantPhi} and $C_{\mathrm{start}} \geq 1$ is a free parameter. It can be used to ensure that the initial residual is dominated by $r_0$.

The next lemma allows to iterate Proposition~\ref{prop:ResidualPropagation} for specific choices of $(a,b,\beta)$.
\begin{lemma} \label{lem:ControlChoice}
Let $m \geq 1$, $\alpha > 0$, $\kappa > 1/2$, $\gamma < 3/2$ and $z \in L^{2m}_\omega\big( C_{\mathrm{loc}} C^{1+\kappa}_x \cap C_{\mathrm{loc}}^\alpha C^{1+}_x \big)$. 

Then for all $C >1$ there exist $a,b \in \mathbb{N}$ and $\beta \in (0,1)$ such that the control choice~\eqref{def:Parameter} satisfies for all $n \geq 1$ the conditions~\eqref{R1},~\eqref{R2},~\eqref{R3} and
\begin{align} \label{eq:ChoiceEstimate}
    C \big( \mathfrak{E}_{n}^{\mathrm{miss}} + \mathfrak{E}_{n}^{\mathrm{com}} + \mathfrak{E}_{n}^{\mathrm{time}} + \mathfrak{E}_{n}^{\mathrm{dis}} + \mathfrak{E}_{n}^{\mathrm{trans}} + \mathfrak{E}_{n}^{\mathrm{sto}} \big) \leq r_{n},
\end{align}
where~$\mathfrak{E}_{n}^{\bullet}$ is defined by~\eqref{Error:Global}.
\end{lemma}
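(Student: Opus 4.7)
The plan is to substitute the explicit parameter choice~\eqref{def:Parameter} into the six error terms of~\eqref{Error:Global} and reduce the inequality~\eqref{eq:ChoiceEstimate} to a finite list of exponent inequalities in $\alpha,\beta,\gamma,\kappa,b$ which can be resolved in order: first $\beta$, then $b$, then $a$. Conditions~\eqref{R1} ($\mu_n = \sqrt{\lambda_{n-1}\lambda_n} \leq \lambda_n$) and~\eqref{R2} ($\ell_n = \lambda_n^{-1} \leq 1$) are immediate from the Ansatz; the nontrivial part of~\eqref{R3} amounts to lower bounds on $a$ and $b$.

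As a preliminary step I would estimate the geometric sum $\mathfrak{S}_{N,\alpha}$ of~\eqref{def:SumDec}. The ratio of two consecutive summands equals $\tfrac{1}{4}(\lambda_{n+1}/\lambda_n)^{\alpha}(\lambda_{n-1}/\lambda_n)^{\beta/2} = \tfrac{1}{4}\,a^{(b-1)b^{n-1}(b\alpha-\beta/2)}$. Since $\alpha > 0$, this ratio exceeds $2$ for every $n \geq 1$ as soon as $a,b$ are large enough, so that summing the resulting geometric tail gives $\mathfrak{S}_{N,\alpha} \leq 2\lambda_N^{\alpha}r_{N-1}^{1/2}$.

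Inserting this bound and the parameter choice~\eqref{def:Parameter} into each error term and isolating the leading exponent of $\mathfrak{E}_n^{\bullet}/r_n$ (always a power of $a$ of the form $a^{b^{n-k}\,P(b)}$ for a polynomial $P$ in $b$ with coefficients depending on $\alpha,\beta,\gamma,\kappa$), I expect to obtain the following decay requirements: $\beta < 1$ from $\mathfrak{E}^{\mathrm{miss}}$; $\beta < 3/2-\gamma$ from $\mathfrak{E}^{\mathrm{dis}}$ (using $\gamma < 3/2$); $\beta < 1/2$ from $\mathfrak{E}^{\mathrm{time}},\mathfrak{E}^{\mathrm{trans}},\mathfrak{E}^{\mathrm{sto}}$; and $\beta < \alpha$ together with $b\beta < \kappa - 1/2$ from the three subterms of $\mathfrak{E}^{\mathrm{com}}$. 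The main obstacle is the last constraint: the subterm $\log(\lambda_n)\lambda_{n-1}^{1/2-\kappa}\mathfrak{S}_{n-1,0}C_z$ of $\mathfrak{E}^{\mathrm{com}}$, after division by $r_n$, produces the factor $\lambda_n^{\beta}\lambda_{n-1}^{1/2-\kappa}$ whose exponent in base $a$ is $b^{n-1}\bigl(b\beta-(\kappa-1/2)\bigr)$; decay is possible only if $b\beta < \kappa-1/2$, which is precisely where the hypothesis $\kappa > 1/2$ is used and which couples the choices of $\beta$ and $b$.

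Putting these pieces together, I would first pick $b \in \mathbb{N}$ with $b \geq 2$ (which together with $a$ large handles the structural parts of~\eqref{R3} and the geometric-sum domination), then choose $\beta \in \bigl(0,\ \min\{3/2 - \gamma,\ 1/2,\ \alpha,\ (\kappa-1/2)/(2b)\}\bigr)$, an interval which is non-empty thanks to the hypotheses $\gamma < 3/2$, $\alpha > 0$, $\kappa > 1/2$. Under this choice every ratio $\mathfrak{E}^{\bullet}_n/r_n$ decays at least like $a^{-cb^n}$ for a fixed $c>0$, so its supremum over $n \geq 1$ is attained (up to a multiplicative constant) at $n=1$. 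A final choice of $a \in \mathbb{N}$ sufficiently large absorbs the constant $C$ as well as $C_z$, $|\nu|$, logarithmic factors and the remaining lower bounds in~\eqref{R3} ($a^{b-1}\geq 48$, $a^{(1+b)/2}\geq 10$), thereby securing~\eqref{eq:ChoiceEstimate} for all $n\geq 1$.
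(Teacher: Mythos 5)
Your overall strategy matches the paper's: substitute the explicit parameter family~\eqref{def:Parameter} into the six errors, reduce~\eqref{eq:ChoiceEstimate} to exponent inequalities in $a^{b^{n-1}P(b,\beta)}$, and then resolve them by choosing $b$, then $\beta$, then $a$. Your handling of $\mathfrak{S}_{N,\alpha}$ by a ratio test is a valid variant of the paper's \eqref{est:Sums}, and your conditions for $\mathfrak{E}^{\mathrm{miss}}$, $\mathfrak{E}^{\mathrm{dis}}$, $\mathfrak{E}^{\mathrm{time}}$, $\mathfrak{E}^{\mathrm{sto}}$ and the second commutator subterm (leading to $b\beta<\kappa-1/2$) are correct, if slightly stricter than what is actually required.

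There is, however, a genuine gap in the commutator term and, consequently, in your choice of $b$. When you insert the parameters into the leading commutator subterm
\[
\log(\lambda_{n})\,\lambda_{n-1}\,\ell_{n}^{\alpha}\,\mathfrak{S}_{n-1,\alpha}\,\mathfrak{S}_{n-1,0},
\]
the resulting exponent after dividing by $r_n$ is $b^{n-1}\bigl(1-\alpha(b-1)+\beta b\bigr)$, not $b^{n-1}\bigl(\beta-\alpha\bigr)$ or any quantity whose sign is governed by the simplification $\beta<\alpha$. The correct decay condition is $1-\alpha(b-1)+\beta b<0$, i.e.\ $\beta<\bigl(\alpha(b-1)-1\bigr)/b$, and this interval for $\beta$ is non-empty only if $b>1+1/\alpha$. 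This is the condition \eqref{ConditionB} in the paper and is what ties $b$ to the time-H\"older exponent $\alpha$. Your plan picks $b\geq 2$ based only on the structural conditions~\eqref{R3} and geometric-sum domination, and sets $\beta<\min\{3/2-\gamma,\ 1/2,\ \alpha,\ (\kappa-1/2)/(2b)\}$. For $\alpha<1$, $b=2$ gives $1-\alpha+2\beta>0$ for every $\beta>0$, so $\mathfrak{E}_n^{\mathrm{com}}/r_n$ actually grows super-exponentially and \eqref{eq:ChoiceEstimate} fails. The fix is precisely the paper's: first enforce $b>1+1/\alpha$, then pick $\beta\in\bigl(0,\ (\alpha(b-1)-1)/b\bigr)$ intersected with the other upper bounds. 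A secondary (smaller) inaccuracy is your claim that the transport error yields $\beta<1/2$; the actual bound is $\beta<(b-1)/(2b-1)<1/2$, so $\beta<1/2$ alone is insufficient, though in your final interval this will typically be subsumed by the stricter commutator bound once $b$ is chosen correctly.
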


\begin{proof}
We aim to trace the restrictions on $a,b$ and $\beta$ such that~\eqref{R1},~\eqref{R2},~\eqref{R3} and~\eqref{eq:ChoiceEstimate} are realized individually. 

By assumption $C_z$ is finite. Notice that $\lambda_n$ and $\mu_n$ are increasing and, thus, $r_n$ and $\ell_n$ are decreasing. Moreover~\eqref{R1} and~\eqref{R2} are satisfied for all $a,b > 1$. We find that~\eqref{R3} imposes the restrictions
\begin{subequations} \label{ConditionA}
\begin{alignat}{3}
12 \lambda_n  &\leq 4 \mu_{n+1} \hspace{1em} &&\Leftrightarrow  \hspace{1em} 9 &&\leq a^b , \\
10 &\leq \mu_n \hspace{1em} &&\Leftrightarrow \hspace{1em} 100 &&\leq a^{1+b} , \\
48 \lambda_n &\leq \lambda_{n+1} \hspace{1em} &&\Leftrightarrow  \hspace{1em} 48 &&\leq  a^b .
\end{alignat}
\end{subequations}

Observe that $r_{m} = r_n \lambda_m^{-\beta} \lambda_n^{\beta} $ for each $m,n \in \mathbb{N}$. Next, we will find sufficient conditions such that each term on the left hand side of~\eqref{eq:ChoiceEstimate} is bounded by $r_n$. 

\textit{ad~\eqref{Error:Miss}.} As $(1-b)(1-\beta) < 0$ the factor in front of $r_n$,
\begin{align*}
   C \mathfrak{E}_{n}^{\mathrm{miss}}  &= C\left( b^{n-1} (1+b) \log(a) +1 \right) a^{b^{n-1}(1-b)(1-\beta)} r_n,
\end{align*}
can be made arbitrarily small.

Before we proceed let us estimate the sums~\eqref{def:SumDec}. Neglecting the decaying factor and estimating the geometric series show
\begin{align} \label{est:Sums}
    \mathfrak{S}_{n-1,\alpha} &= r_{n}^{1/2} a^{b^{n} \beta/2} \sum_{k=1}^{n-1} 4^{n-1-k} a^{b^{k-1}(\alpha b - \beta/2)} \leq r_{n}^{1/2} a^{b^{n-1}( b\beta/2 + \alpha) } 4^{n}.
\end{align}

\textit{ad~\eqref{Error:Com}.} Using~\eqref{est:Sums},
\begin{align*}
     C\mathfrak{E}_{n}^{\mathrm{com}} &\leq Cb^n \log(a) 4^{2n}  a^{b^{n-1}(1-\alpha (b-1) + \beta b)} r_n \\
     &+ Cb^n \log(a) 4^n \left(a^{b^{n-1}(1/2 - \alpha b + \beta b)}   + a^{b^{n-1}(1/2 - \kappa + \beta b)}  \right) r_n \\
     &+  Cb^n \log(a)\left( a^{b^{n}( \beta- \alpha ) } + a^{b^{n-1}( b\beta- \kappa ) } \right) r_n.
\end{align*}
We need to restrict to
\begin{alignat}{3} \label{ConditionEta1}
1-\alpha (b-1) + \beta b &< 0 \hspace{1em} &&\Leftrightarrow  \hspace{1em} \beta  &&<\frac{ \alpha (b-1) - 1}{b} , \\ \label{ConditionB}
\frac{ \alpha (b-1) - 1}{b} &> 0 \hspace{1em} &&\Leftrightarrow  \hspace{1em} b &&> 1 + \frac{1}{\alpha}, \\ \label{ConditionEta2}
1/2 - \kappa + \beta b &< 0 \hspace{1em} &&\Leftrightarrow \hspace{1em} \beta &&< \frac{1}{b} \left( \kappa - \frac{1}{2} \right).
\end{alignat}

\textit{ad~\eqref{Error:Time}.} Notice that $r_{n}^{1/2} \leq a^{b^n \beta/2} r_n$. Therefore,
\begin{align*}
    C\mathfrak{E}_{n}^{\mathrm{time}} &= Cb^{n} \log(a) a^{b^{n-1}( - b/2  + b\beta/2 - \beta /2 )}  r_{n}^{1/2} \\
    &\leq  Cb^{n} \log(a) a^{b^{n-1}( - b/2  + b\beta - \beta /2 )}  r_{n}.
\end{align*}
We need to impose
\begin{alignat}{3} \label{ConditionEta3}
- b/2  + b\beta - \beta /2  &< 0 \hspace{1em} &&\Leftrightarrow  \hspace{1em} \beta  &&< \frac{b}{2b - 1}.
\end{alignat}

\textit{ad~\eqref{Error:Dissi}.} Notice that $r_{n-1}^{1/4} \leq a^{b^{n-1} ( b \beta - \beta/4)} r_n$. This implies 
\begin{align*}
   C \mathfrak{E}_{n}^{\mathrm{dis}} &\leq C\abs{\nu} b^n \log(a) a^{b^{n-1}(b( \gamma - 3/2 + \beta) - \beta/4)}    r_{n}.
\end{align*}
The condition reads
\begin{alignat}{3} \label{ConditionEta4}
b( \gamma - 3/2 + \beta) - \beta/4  &< 0 \hspace{1em} &&\Leftrightarrow  \hspace{1em} \beta  &&< \frac{4b}{4b-1} \left( \frac{3}{2} - \gamma \right).
\end{alignat}

\textit{ad~\eqref{Error:Trans}.} Using~\eqref{est:Sums}
\begin{align*}
   C \mathfrak{E}_{n}^{\mathrm{trans}}     &\leq C b^n \log(a) 4^n a^{b^{n-1}( 1/2 - b/2 + b \beta - \beta /2)}  r_{n}.
\end{align*}

We impose
\begin{alignat}{3} \label{ConditionEta5}
1/2 - b/2 + b \beta - \beta /2  &< 0 \hspace{1em} &&\Leftrightarrow  \hspace{1em} \beta  &&< \frac{b-1}{2b -1}.
\end{alignat}

\textit{ad~\eqref{Error:Sto}.} Notice that $r_{n-1}^{1/2} C_z \leq a^{b^{n-1}( b \beta - \beta/2)} r_n$. Therefore,
\begin{align*}
   C  \mathfrak{E}_{n}^{\mathrm{sto}} &\leq  Cb^n \log(a) a^{b^{n-1}( -b/2 + b\beta - \beta/2 )}  r_{n}.
\end{align*}
This calls for the condition
\begin{alignat}{3} \label{ConditionEta6}
-b/2 + b\beta - \beta/2   &< 0 \hspace{1em} &&\Leftrightarrow  \hspace{1em} \beta  &&  < \frac{b}{2b-1}.
\end{alignat}

Lastly, we need to verify that all conditions can be satisfied simultaneously. First the value of $b$ is determined by~\eqref{ConditionB}. This allows to find $\beta > 0$ such that
~\eqref{ConditionEta1}, ~\eqref{ConditionEta2} - \eqref{ConditionEta6} hold. Now, $b$ and $\beta$ are fixed. Let $\varsigma > 0$ quantify the slowest convergence, i.e.,
\begin{align*}
    -\varsigma &= \max \big\{ (1-b)(1-\beta),\, 1-\alpha (b-1) + \beta b,\, 1/2 - \kappa + \beta b, \\
    &\hspace{4em} - b/2  + b\beta - \beta /2, \,b( \gamma - 3/2 + \beta) - \beta/4,\, 1/2 - b/2 + b \beta - \beta /2  \big\}.
\end{align*}

As $\mathbb{N} \ni n \mapsto (4b)^{2n} a^{-b^{n-1}\varsigma}$ is decreasing for $a \geq a_0(b,\varsigma)$, we estimate
\begin{align*}
     &C \big( \mathfrak{E}_{n}^{\mathrm{miss}} + \mathfrak{E}_{n}^{\mathrm{com}} + \mathfrak{E}_{n}^{\mathrm{time}} + \mathfrak{E}_{n}^{\mathrm{dis}} + \mathfrak{E}_{n}^{\mathrm{trans}} + \mathfrak{E}_{n}^{\mathrm{sto}} \big) \\
     &\hspace{3em} \leq C \log(a) \big(10 + \abs{\nu} \big) (4b)^{2n} a^{-b^{n-1}\varsigma} r_n \\
     &\hspace{3em} \leq C \log(a) \big(10 + \abs{\nu} \big) (4b)^{2} a^{-\varsigma} r_n.
\end{align*}
The value of $a$ is chosen such that~\eqref{ConditionA}, $ a\geq a_0(b,\varsigma)$ and
\begin{align}
 C \log(a) \big(10 + \abs{\nu} \big) (4b)^{2} a^{-\varsigma} \leq 1
\end{align}
hold. 
\end{proof}

\begin{remark}
    We want to stress that Lemma~\ref{lem:ControlChoice} does not depend on the value~$C_{\mathrm{start}}$ nor the explicit value $C_z$. In particular, it is independent of $m \geq 1$ as long as $C_z < \infty$. 

    Notice that the exponent $b$ is purely determined by the $\alpha$-H\"older regularity in time of $z$, cf.~\eqref{ConditionB}. In other words, a time irregular stochastic term ($\alpha$ small) forces us to introduce higher frequencies at a faster rate, i.e., $n \mapsto \lambda_n \sim a^{\alpha^{-n}}$ increases rapidly.
\end{remark}

We are ready to proof convergence of the iteration~\eqref{def:Iteration}.

\begin{theorem}
\label{convergenceTheorem}
Let $m \geq 1$, $\alpha > 0$, $\kappa > 1/2$, $\gamma < 3/2$ and $z \in L^{2m}_\omega\big( C_{\mathrm{loc}}C^{1+\kappa}_x \cap C_{\mathrm{loc}}^\alpha C^{1+}_x \big)$. 

Then there exist $a,b \in \mathbb{N}$, $\beta \in(0,1)$ and $C_{\mathrm{start}} \geq 1$ such that for all $n' \geq n \geq 0$
\begin{align} \label{eq:EstimateQ}
    \norm{q_{n}}_{L^m_\omega C_{\mathrm{loc}}X} &\leq r_n, \\ \label{eq:EstimateG}
    \norm{g_{\leq n'} - g_{\leq n}}_{L^{2m}_\omega C_{\mathrm{loc}} B^{1/2}_{\infty,1} } &\lesssim \sqrt{r_n}. 
\end{align}
In particular, $g = \lim_{n \to \infty} g_{\leq n} \in L^{2m}_\omega C_{\mathrm{loc}} B^{1/2}_{\infty,1}$ exists, solves~\eqref{intro:Main} and satisfies 
\begin{align} \label{eq:NormEstimateLimit}
    \norm{g}_{L^{2m}_\omega C_{\mathrm{loc}} B^{1/2}_{\infty,1} } \lesssim \sqrt{C_{\mathrm{start}}} \left( \norm{z}_{L^{2m}_\omega C_{\mathrm{loc}} C^{1+\kappa}_x} + \seminorm{z }_{L^{2m}_\omega C^\alpha_{\mathrm{loc}} C^{1+}_x} + 1 \right). 
\end{align}
\end{theorem}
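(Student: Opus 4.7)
The plan is to iterate the machinery assembled in Proposition~\ref{prop:ResidualPropagation} and Lemma~\ref{lem:ControlChoice} and then pass to the limit. First, apply Lemma~\ref{lem:ControlChoice} with some constant $C$ strictly larger than $C_{\mathrm{it}}$ from Proposition~\ref{prop:ResidualPropagation} to obtain $a,b\in\mathbb{N}$ and $\beta\in(0,1)$ for which~\eqref{R1},~\eqref{R2},~\eqref{R3} and~\eqref{eq:ChoiceEstimate} all hold. With these parameters fixed, I would verify the base case $n=0$ of~\eqref{eq:EstimateQ}: using \eqref{def:InitResidual}, stability of $P_{\leq \lambda_0}$ on $L^\infty$, and boundedness of $\mathcal{R}_j^o$ on $L^\infty$ via the logarithmic truncation of Lemma~\ref{lem:LinfQuantified}, one bounds $\|q_0\|_{L^m_\omega C_{\mathrm{loc}} X} \lesssim \|z\|_{L^{2m}_\omega C_{\mathrm{loc}} C^{1+}_x}^2 \lesssim C_z^2$, so that choosing $C_{\mathrm{start}}\geq 1$ sufficiently large (depending only on universal constants) yields $\|q_0\|_{L^m_\omega C_{\mathrm{loc}} X}\leq r_0 = C_{\mathrm{start}}(C_z^2+1)$. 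The inductive step is then immediate: assuming~\eqref{eq:EstimateQ} through level $N$, hypothesis~\eqref{ass:Induction} is met, so Proposition~\ref{prop:ResidualPropagation} applies and \eqref{eq:ChoiceEstimate} promotes the resulting bound to $\|q_{N+1}\|_{L^m_\omega C_{\mathrm{loc}} X}\leq r_{N+1}$.

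For the Cauchy estimate~\eqref{eq:EstimateG}, I would combine~\eqref{eq:EstimateBuildingBlock} with~\eqref{eq:EstimateQ} to get $\|g_{k+1}\|_{L^{2m}_\omega C_{\mathrm{loc}} B^{1/2}_{\infty,1}}\lesssim \sqrt{r_k}$, then telescope
\begin{align*}
g_{\leq n'}-g_{\leq n}=\sum_{k=n+1}^{n'}\bigl(g_{\leq k-1}*_t\psi_{\ell_k}-g_{\leq k-1}\bigr) + \sum_{k=n+1}^{n'}g_k.
\end{align*}
The second sum is controlled directly by $\sum \sqrt{r_{k-1}}$; since $r_k = C_{\mathrm{start}}(C_z^2+1)\lambda_0^\beta\lambda_k^{-\beta}$ with $\lambda_k=a^{b^k}$ decays doubly exponentially, this sum is dominated by $\sqrt{r_n}$. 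For the mollification error, use $\|h*_t\psi_{\ell_k}-h\|_{C_tE}\lesssim \ell_k^\alpha\seminorm{h}_{C_t^\alpha E}$ together with~\eqref{eq:SumAlpha}; the resulting contribution $\ell_k^\alpha\mathfrak{S}_{k-1,\alpha}$ is precisely the factor appearing in the commutator error bound \eqref{Error:Com}, which Lemma~\ref{lem:ControlChoice} forced to be summable by $r_k$, hence by $\sqrt{r_n}$ after extracting the square root and summing. This establishes~\eqref{eq:EstimateG} and in particular that $(g_{\leq n})$ is Cauchy in $L^{2m}_\omega C_{\mathrm{loc}} B^{1/2}_{\infty,1}$, yielding the limit $g$.

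Finally, to identify $g$ as a weak solution, test~\eqref{eq:Residual} against $\xi\in C^\infty_c(\mathbb{R}\times\mathbb{T}^2)$. Since $\|q_n\|_{L^m_\omega C_{\mathrm{loc}} X}\leq r_n\to 0$, the residual contribution vanishes in the limit. Strong convergence of $g_{\leq n}\to g$ in $C_{\mathrm{loc}}B^{1/2}_{\infty,1}$ (extracting an almost-sure subsequence) together with $z_n = P_{\leq\lambda_n}z\to z$ in $C_{\mathrm{loc}}C^{1+}_x$ ensures that the bilinear expression $\nabla^\perp(g_{\leq n}+z_n)\Lambda(g_{\leq n}+z_n)$, interpreted via Lemma~\ref{lem:Commutator}, converges to $\nabla^\perp(g+z)\Lambda(g+z)$ when tested against $\nabla\xi$; the linear terms $\partial_t\mathcal{R}g_{\leq n}$ and $\nu\Lambda^{\gamma-1}\nabla g_{\leq n}$ pass to the limit by continuity of these operators. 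The norm bound~\eqref{eq:NormEstimateLimit} then follows from~\eqref{eq:EstimateG} applied with $n=0$ and $g_{\leq 0}=0$, since $\sqrt{r_0}=\sqrt{C_{\mathrm{start}}(C_z^2+1)}\lesssim \sqrt{C_{\mathrm{start}}}(C_z+1)$.

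The hardest step is the Cauchy argument of the second paragraph: the building-block estimate $\|g_k\|\lesssim \sqrt{r_{k-1}}$ is cheap, but the mollification remainders $g_{\leq k-1}*_t\psi_{\ell_k}-g_{\leq k-1}$ involve the Hölder seminorms $\seminorm{g_{\leq k-1}}_{C_{\mathrm{loc}}^\alpha B^{1/2}_{\infty,1}}$ which carry the $4^{k-n}$ factors appearing in $\mathfrak{S}_{k-1,\alpha}$. Controlling them requires using precisely the parameter restrictions encoded in conditions \eqref{ConditionEta1}--\eqref{ConditionEta2} of Lemma~\ref{lem:ControlChoice}, where the doubly exponential decay of $r_k$ must beat these polynomial-in-$k$ factors; the fact that the same geometric decay rates are baked into both the residual closure and the Cauchy closure is the real content of the construction.
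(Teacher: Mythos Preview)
Your argument follows the paper's approach closely: apply Lemma~\ref{lem:ControlChoice}, verify the base case for $q_0$, induct via Proposition~\ref{prop:ResidualPropagation}, telescope for the Cauchy estimate, and pass to the limit using Lemma~\ref{lem:Commutator}. Two minor points where the paper differs: first, it bounds the mollification contribution $\ell_{k+1}^\alpha \mathfrak{S}_{k,\alpha} \leq \sqrt{r_{k+1}}$ directly from~\eqref{est:Sums} and~\eqref{ConditionEta1} rather than routing through~\eqref{Error:Com} (your phrase ``hence by $\sqrt{r_n}$ after extracting the square root'' is not a valid inference as written, since the commutator bound controls $\ell_{k}^\alpha\mathfrak{S}_{k-1,\alpha}$ only after multiplication by $\lambda_{k-1}\mathfrak{S}_{k-1,0}$, though you correctly identify~\eqref{ConditionEta1} as the real mechanism in your final paragraph); second, it passes to the limit by testing against arbitrary $\xi_\omega\in L^\infty_\omega$ and taking expectation, which avoids the subsequence extraction.
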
  

\begin{proof}
We apply Lemma~\ref{lem:ControlChoice} for $C = C_{\mathrm{it}}$ from Proposition~\ref{prop:ResidualPropagation} and obtain a parameter configuration $(a,b,\beta)$ such that~\eqref{R1},~\eqref{R2},~\eqref{R3} and~\eqref{eq:ChoiceEstimate} hold for all $n \geq 1$. We can choose $(a,b,\beta)$ such that ~\eqref{ConditionEta1} holds. 

Now, we proceed via induction. First, we argue that it is possible to start the iteration. By~\eqref{def:InitResidual}, the frequency locality of $z_0$ and Lemma~\ref{lem:LinfQuantified}, Poincaré's and H\"older's inequalities
    \begin{align*}
        \norm{q_0}_{L^m_\omega C_{\mathrm{loc}}X} &= \norm{ \Delta^{-1} \nabla \cdot [\nabla^\perp z_0 \Lambda z_0] }_{L^m_\omega C_{\mathrm{loc}}X} \\
        &\lesssim \log(\lambda_0) \norm{ \Delta^{-1} \nabla \cdot [\nabla^\perp z_0 \Lambda z_0] }_{L^m_\omega C_{\mathrm{loc}}L^\infty_x} \\
       &\lesssim \log(\lambda_0) \norm{\nabla^\perp z_0 \Lambda z_0] }_{L^m_\omega C_{\mathrm{loc}}L^\infty_x}  \\
       &\lesssim \log(\lambda_0) \norm{ z_0}_{L^{2m}_\omega C_{\mathrm{loc}}C^{1+}_x}^2 \\
       &\lesssim \log(\lambda_0) (C_z^2 + 1).
    \end{align*}
Therefore, there exists $C_\mathrm{start}$ such that $\norm{q_0}_{L^m_\omega C_{\mathrm{loc}}X} \leq r_0$.

Since Lemma~\ref{lem:ControlChoice} guarantees that the assumptions of Proposition~\ref{prop:ResidualPropagation} are satisfied, the induction step follows and we have shown~\eqref{eq:EstimateQ}. 

Next, we verify~\eqref{eq:EstimateG}. Notice that, due to~\eqref{def:gLeQn} for arbitrary $k \geq 0$,
\begin{align*}
    \norm{g_{\leq k+1} - g_{\leq k}}_{L^{2m}_\omega C_{\mathrm{loc}} B^{1/2}_{\infty,1} } \lesssim \ell_{k+1}^\alpha \seminorm{g_{\leq k}}_{L^{2m}_\omega C^\alpha_{\mathrm{loc}} B^{1/2}_{\infty,1} } +  \norm{g_{k+1}}_{L^{2m}_\omega C_{\mathrm{loc}} B^{1/2}_{\infty,1} }.
\end{align*}
Therefore, using~\eqref{eq:EstimateBuildingBlock} and~\eqref{eq:SumAlpha},
\begin{align*}
    \norm{g_{\leq k+1} - g_{\leq k}}_{L^{2m}_\omega C_{\mathrm{loc}} B^{1/2}_{\infty,1} } \lesssim \ell_{k+1}^\alpha  \mathfrak{S}_{k,\alpha} + \sqrt{r_{k}}.
\end{align*}
Recall~\eqref{est:Sums} and~\eqref{ConditionEta1}. This allows to bound
\begin{align*}
    \ell_{k+1}^\alpha  \mathfrak{S}_{k,\alpha} \leq \sqrt{r_{k+1}} a^{b^{k}( b\beta/2 - \alpha(b-1) ) } 4^{k+1} \leq \sqrt{r_{k+1}} \leq \sqrt{r_{k}}.
\end{align*}
Notice that $r_{m} = r_n \lambda_m^{-\beta} \lambda_n^{\beta} $ for all $m,n \in \mathbb{N}$. Therefore,
\begin{align*}
     \norm{g_{\leq n'} - g_{\leq n}}_{L^{2m}_\omega C_{\mathrm{loc}} B^{1/2}_{\infty,1} } &\leq  \sum_{k=n}^{n'-1}  \norm{g_{\leq k+1} - g_{\leq k}}_{L^{2m}_\omega C_{\mathrm{loc}} B^{1/2}_{\infty,1} } \\
     &\lesssim \sqrt{r_n}  \sum_{k=n}^{n'-1} a^{-b^n \beta (b^{k-n}-1)} \lesssim \sqrt{r_n},
\end{align*}
where we used the estimate
\begin{align*}
     \sum_{k=n}^{n'-1} a^{ -\beta b^n(b^{k-n}-1)} \leq  \sum_{k=0}^{n'-n-1} a^{-\beta (b-1)k} \leq \frac{1}{1-a^{-\beta (b-1)}}.
\end{align*}

Lastly, we need to check that $g$ solves~\eqref{intro:Main} and satisfies~\eqref{eq:NormEstimateLimit}. First notice that, since $z_n = P_{\lambda_n} z$ and $C^{1+\kappa} \hookrightarrow \dot{H}^{3/2}$,
\begin{align*}
    \norm{z_n - z}_{\dot{H}^{1/2}}^2 \leq \sum_{ \abs{k} \geq \lambda_n}  \abs{k} \abs{\widehat{z}(k)}^2 \leq \lambda_n^{-2} \norm{z}_{\dot{H}^{3/2}}^2 \lesssim \lambda_n^{-2} \norm{z}_{C^{1+\kappa}_x}^2.
\end{align*}
This implies $z_n \rightarrow z \in L^{2m}_\omega C_{\mathrm{loc}} \dot{H}^{1/2}$. Moreover, as $B^{1/2}_{\infty,1} \hookrightarrow \dot{H}^{1/2}$ the strong convergence of $g_{\leq n} \in L^{2m}_\omega C_{\mathrm{loc}} B^{1/2}_{\infty,1} $ implies $g_{\leq n} + z_n \rightarrow g + z \in L^{2m}_\omega C_{\mathrm{loc}} \dot{H}^{1/2}$.

Let $\xi \in C^\infty_c(\mathbb{R}\times \mathbb{T}^2)$. By~\eqref{eq:Residual} it holds
\begin{align*}
 \langle -\nabla q_{n} -\partial_t \mathcal{R} g_{\leq n} + \nabla^{\perp} (g_{\leq n} + z_n) \Lambda (g_{\leq n} + z_n)- \nu \Lambda^{\gamma -1} \nabla g_{\leq n}, \nabla \xi \rangle  = 0.
\end{align*}
Multiply by $\xi_\omega \in L^\infty_\omega$, take expectation and split up the sum. The linear terms converge due to weak convergence
\begin{align*}
   \mathbb{E}\left[ \langle -\nabla q_{n} -\partial_t \mathcal{R} g_{\leq n}- \nu \Lambda^{\gamma -1} \nabla g_{\leq n}, \nabla \xi \rangle \xi_\omega \right]  \rightarrow   \mathbb{E}\left[ \langle -\partial_t \mathcal{R} g- \nu \Lambda^{\gamma -1} \nabla g, \nabla \xi \rangle \xi_\omega \right].
\end{align*}
Using Lemma~\ref{lem:Commutator} we find for the non-linear term
\begin{align*}
     &\mathbb{E}\left[ \langle  \nabla^{\perp} (g_{\leq n} + z_n) \Lambda (g_{\leq n} + z_n) - \nabla^{\perp} (g + z) \Lambda (g + z), \nabla \xi \rangle \xi_\omega \right]  \\
     &\hspace{2em} \leq \mathbb{E}\left[   \norm{(g_{\leq n} + z_n) -(g + z)}_{C_{\mathrm{loc}} \dot{H}^{1/2}}^2 \norm{ \xi}_{L^1_t \dot{H}^4}  \abs{\xi_\omega} \right] \\
     &\hspace{2em}\leq \norm{(g_{\leq n} + z_n) -(g + z)}_{L^{2m}_\omega C_{\mathrm{loc}} \dot{H}^{1/2}}^2 \norm{ \xi}_{L^1_t \dot{H}^4}  \norm{\xi_\omega}_{L^{\infty}_\omega}.
\end{align*}
Thus, as $\xi_\omega$ was arbitrary, we have verified that $g$ satisfies~\eqref{eq:SolutionConcept}. 

The estimate~\eqref{eq:NormEstimateLimit} follows from~\eqref{eq:EstimateG} by noticing that $g_{\leq 0} = 0$.

\end{proof}

In Theorem~\ref{convergenceTheorem} we traced the concentration speed of $g_{\leq n}$ in $L^{2m}_\omega C_{\mathrm{loc}} B^{1/2}_{\infty,1}$ as it is sufficient for the identification of the non-linearity. However, the explicit structure of the solution allows to read of improved regularity based on the parameter choice. 

\begin{corollary}[Regularity] \label{cor:Reg}
Let the framework of Theorem~\ref{convergenceTheorem} be satisfied. Moreover, let $(a,b,\beta,C_{\mathrm{start}})$ be given by Theorem~\ref{convergenceTheorem} and $g$ denote the corresponding solution. 
Let $\vartheta_t, \vartheta_x \in [0,1)$ satisfy $\vartheta_t + \vartheta_x < 1/2 + \beta/(2b)$ and either
\begin{enumerate}
    \item (high spatial regularity) 
    \begin{align} \label{cond:HighSpace}
   \vartheta_x \geq \frac{1}{2} - \alpha +  \frac{\beta}{2b} \quad \text{and} \quad  \vartheta_t     < \alpha - \frac{1}{b}\left( \vartheta_x - \left( \frac{1}{2} - \alpha +\frac{\beta}{2b}\right)  \right),
\end{align}
\item (high temporal regularity) or 
    \begin{align} \label{cond:HighTime}
   \vartheta_x < \frac{1}{2} - \alpha +  \frac{\beta}{2b}\quad \text{and} \quad  \vartheta_t     < \alpha.
\end{align}
\end{enumerate}
Then $g \in L^{2m}_\omega C^{\vartheta_t}_{\mathrm{loc}} C^{\vartheta_x}$ with
    \begin{align} \label{eq:RegularityCor}
        \seminorm{g}_{L^{2m}_\omega C_{\mathrm{loc}}^{\vartheta_t} C^{\vartheta_x}} \lesssim \sqrt{C_{\mathrm{start}}} \left( \norm{z}_{L^{2m}_\omega C_{\mathrm{loc}} C^{1+\kappa}_x} + \seminorm{z }_{L^{2m}_\omega C^\alpha_{\mathrm{loc}} C^{1+}_x} + 1 \right).
    \end{align}
\end{corollary}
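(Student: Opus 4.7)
The plan is to adapt the telescoping construction from Theorem~\ref{convergenceTheorem} to the stronger target norm $L^{2m}_\omega C^{\vartheta_t}_{\mathrm{loc}} C^{\vartheta_x}$, matching the summability windows appearing in the two cases \eqref{cond:HighSpace} and \eqref{cond:HighTime}. The starting point is to bound each building block $g_n$ in the stronger norm. Specialising \eqref{eq:EstimateBuildingBlock02} and \eqref{eq:TimeSpace_Hoelder} to the choice $\ell_n=\lambda_n^{-1}$ and invoking the residual bound $\norm{q_{n-1}}_{L^m_\omega C_{\mathrm{loc}} X}\le r_{n-1}$ from Theorem~\ref{convergenceTheorem}, one obtains
\begin{align*}
\norm{g_n}_{L^{2m}_\omega C_{\mathrm{loc}} C^{\vartheta_x}_x}\lesssim \lambda_n^{\vartheta_x-1/2}\, r_{n-1}^{1/2}, \qquad \seminorm{g_n}_{L^{2m}_\omega C^\alpha_{\mathrm{loc}} C^{\vartheta_x}_x}\lesssim \lambda_n^{\vartheta_x-1/2+\alpha}\, r_{n-1}^{1/2}.
\end{align*}
A log-convex interpolation in time (valid since $\vartheta_t<\alpha$ in both regimes of the corollary) then yields
\begin{align*}
\seminorm{g_n}_{L^{2m}_\omega C^{\vartheta_t}_{\mathrm{loc}} C^{\vartheta_x}_x}\lesssim \lambda_n^{\vartheta_x-1/2+\vartheta_t}\, r_{n-1}^{1/2}.
\end{align*}

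Next I would estimate the telescoping increments $g_{\le n+1}-g_{\le n}=(g_{\ell_{n+1}}-g_{\le n})+g_{n+1}$. The second summand is controlled by the previous display, while the mollification error obeys the standard bound
\begin{align*}
\norm{g_{\ell_{n+1}}-g_{\le n}}_{L^{2m}_\omega C^{\vartheta_t}_{\mathrm{loc}} C^{\vartheta_x}_x}\lesssim \ell_{n+1}^{\alpha-\vartheta_t}\, \seminorm{g_{\le n}}_{L^{2m}_\omega C^\alpha_{\mathrm{loc}} C^{\vartheta_x}_x},
\end{align*}
and the $C^\alpha$-seminorm on the right inherits the recursive bound $\seminorm{g_{\le n}}_{L^{2m}_\omega C^\alpha_{\mathrm{loc}} C^{\vartheta_x}_x}\lesssim \sum_{k\le n}4^{n-k}\lambda_k^{\vartheta_x-1/2+\alpha}r_{k-1}^{1/2}$, in complete analogy with \eqref{eq:SumBoth}.

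Plugging in $\lambda_n=a^{b^n}$ and $r_n\sim C_{\mathrm{start}}(C_z^2+1)\lambda_n^{-\beta}$, summability of $\sum_n \seminorm{g_{n+1}}_{C^{\vartheta_t}_{\mathrm{loc}}C^{\vartheta_x}}$ reduces to $b(\vartheta_x-1/2+\vartheta_t)<\beta/2$, i.e.\ to the main hypothesis $\vartheta_t+\vartheta_x<1/2+\beta/(2b)$. For the mollification contribution, the sum in its upper bound is dominated by its last term precisely when $\vartheta_x\ge 1/2-\alpha+\beta/(2b)$; summability then becomes $b\vartheta_t+\vartheta_x<\alpha(b-1)+1/2+\beta/(2b)$, which on rearrangement is exactly \eqref{cond:HighSpace}. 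In the complementary regime $\vartheta_x<1/2-\alpha+\beta/(2b)$ the same sum is either uniformly bounded or dominated by its initial terms (with geometric factors harmlessly absorbed by sufficiently large $a$), so that the mollification contribution is controlled by $\lambda_{n+1}^{\vartheta_t-\alpha}$, summable whenever $\vartheta_t<\alpha$, which is \eqref{cond:HighTime}. Summing the resulting geometric series $\sum_n r_n^{1/2}$ and tracking the $C_{\mathrm{start}}$ dependence exactly as in the derivation of \eqref{eq:NormEstimateLimit} then delivers \eqref{eq:RegularityCor}.

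The main obstacle is the exponent bookkeeping: although each ingredient follows from estimates already established in the paper, one has to verify that the two case distinctions stated in the corollary correspond precisely to the two possible dominance patterns in the sum defining $\seminorm{g_{\le n}}_{C^\alpha_{\mathrm{loc}} C^{\vartheta_x}_x}$, and that no hidden regularity loss is introduced by the time interpolation as $\vartheta_t$ approaches $\alpha$.
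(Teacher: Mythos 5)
Your proposal is correct and follows essentially the same route as the paper's proof: telescoping over $g_{\leq n+1}-g_{\leq n}=(g_{\ell_{n+1}}-g_{\leq n})+g_{n+1}$, bounding the building block in $C^{\vartheta_t}C^{\vartheta_x}$ by $\lambda_{n+1}^{\vartheta_x-1/2+\vartheta_t}r_n^{1/2}$, the mollification error by $\ell_{n+1}^{\alpha-\vartheta_t}\sum_{k\leq n}4^{n-k}\lambda_k^{\vartheta_x-1/2+\alpha}r_{k-1}^{1/2}$, and splitting into the two regimes according to the sign of $b(\vartheta_x+\alpha-1/2)-\beta/2$. The only cosmetic difference is that you obtain the $\lambda_{n+1}^{\vartheta_x-1/2+\vartheta_t}$ bound on $\seminorm{g_{n+1}}_{C^{\vartheta_t}C^{\vartheta_x}}$ via log-convex Hölder interpolation in time between \eqref{eq:EstimateBuildingBlock02} and \eqref{eq:TimeSpace_Hoelder}, whereas the paper writes it down directly (the interpolation is valid in both regimes because \eqref{cond:HighSpace} forces $\vartheta_t<\alpha$ just as \eqref{cond:HighTime} does, as you note); the resulting exponent bookkeeping matches the paper's conditions $b(\vartheta_t+\vartheta_x-1/2)-\beta/2<0$, $b\vartheta_t+\vartheta_x<\alpha(b-1)+1/2+\beta/(2b)$, and $\vartheta_t<\alpha$ exactly.
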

\begin{proof}
Let $(a,b,\beta, C_{\mathrm{start}})$ be a parameter configuration given by Theorem~\ref{convergenceTheorem} and $g$ its corresponding convex integration solution. In particular, it holds $ \norm{q_{n}}_{L^m_\omega C_{\mathrm{loc}}X} \leq r_n$ for all $n$. Then, using~\eqref{def:gLeQn} and~\eqref{eq:TimeSpace_Hoelder}
\begin{align*}
    \seminorm{g_{\leq N+1} - g_{\leq N} }_{L^{2m}_\omega C_{\mathrm{loc}}^{\vartheta_t} C^{\vartheta_x}} &\lesssim \ell_{N+1}^{\alpha - \vartheta_t} \seminorm{g_{\leq N}}_{L^{2m}_\omega C_{\mathrm{loc}}^{\alpha}C^{\vartheta_x}} + \seminorm{g_{N+1}}_{L^{2m}_\omega C_{\mathrm{loc}}^{\vartheta_t} C^{\vartheta_x}} \\
    & \lesssim \ell_{N+1}^{\alpha - \vartheta_t} \sum_{n=1}^N 4^{N-n}\seminorm{ g_n}_{L^{2m}_\omega C_{\mathrm{loc}}^{\alpha}C^{\vartheta_x}}  + \seminorm{g_{N+1}}_{L^{2m}_\omega C_{\mathrm{loc}}^{\vartheta_t} C^{\vartheta_x}} \\
    &\lesssim  \ell_{N+1}^{\alpha - \vartheta_t} \sum_{n=1}^N 4^{N-n} \lambda_{n}^{\vartheta_x - 1/2} \ell_n^{-\alpha} r_{n-1}^{1/2}  + \lambda_{N+1}^{\vartheta_x - 1/2} \ell_{N+1}^{-\vartheta_t} r_N^{1/2}.
\end{align*}
Since $\vartheta_t + \vartheta_x < 1/2 + \beta/(2b)$ the second term is controlled
\begin{align*}
    \lambda_{N+1}^{\vartheta_x - 1/2} \ell_{N+1}^{-\vartheta_t} r_N^{1/2} = a^{b^{N}(b(\vartheta_t + \vartheta_x - 1/2) - \beta/2)} r_{0}^{1/2}.
\end{align*}
We distinguish two cases for the estimate of the first term.

First let us consider $\vartheta_x \geq 1/2 - \alpha +  \beta/(2b)\Leftrightarrow b(\vartheta_x + \alpha - 1/2)  - \beta/2\geq 0$. Then estimating the super exponential by the last summand and bounding the geometric series 
\begin{align*}
     &\ell_{N+1}^{\alpha - \vartheta_t} \sum_{n=1}^N 4^{N-n} \lambda_{n}^{\vartheta_x - 1/2} \ell_n^{-\alpha} r_{n-1}^{1/2}  \\
       &\hspace{2em}= r_{0}^{1/2} a^{\beta/2} a^{b^{N+1}(\vartheta_t - \alpha)} \sum_{n=1}^N 4^{N-n} a^{b^{n-1}(b(\vartheta_x + \alpha - 1/2) - \beta/2)}  \\
       &\hspace{2em}\leq r_{0}^{1/2}a^{\beta/2} a^{b^{N-1}(b^2(\vartheta_t - \alpha)  + b(\vartheta_x + \alpha - 1/2) - \beta/2 )} 4^N.
\end{align*}
In order for the exponent to have a sign we need
\begin{align*}
\hspace{2em} & b^2(\vartheta_t - \alpha)  + b(\vartheta_x + \alpha - 1/2) - \beta/2  < 0 \\
\Leftrightarrow \hspace{2em} &   \vartheta_t     < \alpha + \frac{\beta - 2b(\vartheta_x + \alpha - 1/2)}{2b^2}.
\end{align*}
Thus, if $\vartheta_t$ and $\vartheta_x$ satisfy~\eqref{cond:HighSpace} we can estimate
\begin{align*}
    \seminorm{g_{\leq N+1} - g_{\leq N} }_{L^{2m}_\omega C_{\mathrm{loc}}^{\vartheta_t} C^{\vartheta_x}} \lesssim r_{0}^{1/2} a^{-N\varsigma_1},
\end{align*}
where $-\varsigma_1 = \max \{b(\vartheta_t + \vartheta_x - 1/2) - \beta/2,\,b^2(\vartheta_t - \alpha)  + b(\vartheta_x + \alpha - 1/2) - \beta/2\}$.

Now, let us consider the second case, i.e., $\vartheta_x < 1/2 - \alpha +  \beta/(2b)$. In this situation we can just neglect the super exponential in the sum and derive
\begin{align*}
     &\ell_{N+1}^{\alpha - \vartheta_t} \sum_{n=1}^N 4^{N-n} \lambda_{n}^{\vartheta_x - 1/2} \ell_n^{-\alpha} r_{n-1}^{1/2}  \\
       &\hspace{2em}= r_{0}^{1/2} a^{\beta/2} a^{b^{N+1}(\vartheta_t - \alpha) } \sum_{n=1}^N 4^{N-n} a^{b^{n-1}(b(\vartheta_x + \alpha - 1/2) - \beta/2)}  \\
       &\hspace{2em}\leq  r_{0}^{1/2} a^{\beta/2} a^{b^{N+1}(\vartheta_t - \alpha) } 4^N.
\end{align*}
Using~\eqref{cond:HighTime} it follows
\begin{align*}
    \seminorm{g_{\leq N+1} - g_{\leq N} }_{L^{2m}_\omega C_{\mathrm{loc}}^{\vartheta_t} C^{\vartheta_x}} \lesssim r_{0}^{1/2}a^{-N\varsigma_2},
\end{align*}
where $-\varsigma_2 = \max \{b(\vartheta_t + \vartheta_x - 1/2) - \beta/2,\, \vartheta_t - \alpha \}$.

The assertion~\eqref{eq:RegularityCor} follows, using the summability of $a^{-n\varsigma}$ (applied for $\varsigma \in \{\varsigma_1,\varsigma_2\}$) and $g_{\leq 0} = 0$,
\begin{align*}
     \seminorm{g}_{L^{2m}_\omega C_{\mathrm{loc}}^{\vartheta_t} C^{\vartheta_x}} \leq \sum_{n=0}^\infty  \seminorm{g_{\leq n+1} - g_{\leq n} }_{L^{2m}_\omega C_{\mathrm{loc}}^{\vartheta_t} C^{\vartheta_x}} \lesssim r_0^{1/2} \sum_{n=0}^\infty  a^{-n\varsigma} \lesssim r_0^{1/2}.
\end{align*}
\end{proof}

\begin{remark}
If we choose $\vartheta_t = 0$, we can pick $\vartheta_x \approx 1/2 + \beta/(2b) > 1/2$. On the other hand, if we choose $\vartheta_x = 1/2$, we can take $\vartheta_t \approx \min\{\alpha, \beta/(2b)\} > 0$. Therefore, $g$ enjoys improved regularity in the sense that
\begin{align}
   g \in L^{2m}_\omega \left( C_{\mathrm{loc}} C^{1/2 + \beta/(2b)-}_x \cap  C_{\mathrm{loc}}^{\min\{\alpha, \beta/(2b)\}-} B^{1/2}_{\infty,1} \right).
\end{align} 
\end{remark}

\section{Examples of noises $W$}

\label{examples section}
As seen in Theorem \ref{convergenceTheorem} and more precisely in \eqref{eq:NormEstimateLimit}, all our convex integration scheme requires is the finiteness of the term 
\begin{equation}
    \norm{z}_{L^{2m}_\omega C_{\mathrm{loc}} C^{1+\kappa}_x} + \seminorm{z }_{L^{2m}_\omega C^\alpha_{\mathrm{loc}} C^{1+}_x} 
    \label{normz}
\end{equation}
for some $\kappa>1/2$ and $\gamma<3/2$ and $\alpha>0$. Recalling that 
\[
z_t=\int_0^t e^{-\nu (t-s)\Lambda^\gamma}\Lambda^{\delta-1} {\rm d}W_s
\]
we provide some examples of noises $W$ and according constraints on the parameter $\delta$ which assure that \eqref{normz} is finite. For the sake of conciseness, we set the viscosity $\nu=1$ in this section.
\subsection{Cylindrical Wiener noise}\label{subsec:whiteNoise}
The canonical example is to take $W$ to be a cylindrical Wiener process. Let us stress that in our setting, the parameter $\delta$ will always be negative. The process $z$ should be accordingly thought of as the stochastic convolution of the colored noise $\tilde{W}:=\Lambda^{\delta-1}W$. We cite the following result from \cite{hofmanovaconvex6} on the regularity of the stochastic convolution in this setting.
\begin{lemma}[Regularity of stochastic convolution]
   Let $W$ be a cylindrical Wiener noise on $L^2(\mathbb{T}^2). $ Let $\gamma \in (0,3/2)$, $\kappa>1/2$ and $\delta< (-1-\kappa+\gamma/2)\wedge -1 $. Then for all $m\geq 1$ and $\alpha<1/2$ there exists $C> 1$  such that
    \begin{align}
        \seminorm{z}_{L^{2m}_\omega C^\alpha_t C^{1+}_x} + \norm{z}_{L^{2m}_\omega C_t C^{1+\kappa}_x} \leq C<\infty.
    \end{align}
    \label{regularity of stoch conv, space time white noise }
\end{lemma}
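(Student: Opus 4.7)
The plan is to exploit the Gaussian structure explicitly via a Fourier-side computation, then combine three standard tools: hypercontractivity to promote second moments to arbitrary moments, Bernstein's inequality to pass from $L^p$ to $L^\infty$ at each dyadic scale, and Kolmogorov's continuity theorem for the time regularity. Writing $W_t = \sum_{k\in\mathbb{Z}^2\setminus\{0\}} e_k \beta_k(t)$ with $\{\beta_k\}$ independent standard Brownians (with the appropriate reality pairing), the stochastic convolution decomposes as
\begin{align*}
\widehat{z_t}(k) = |k|^{\delta-1}\int_0^t e^{-(t-s)|k|^\gamma}\, d\hat{W}_s(k),
\end{align*}
which is a centered Gaussian with $\mathbb{E}|\widehat{z_t}(k)|^2 = |k|^{2\delta-2}\tfrac{1-e^{-2t|k|^\gamma}}{2|k|^\gamma}\lesssim |k|^{2\delta-2-\gamma}$. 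Different Fourier modes are (essentially) independent, and by torus translation invariance $z_t$ is spatially stationary.

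For the spatial bound, I would estimate a Littlewood--Paley block by summing over $|k|\sim 2^j$:
\begin{align*}
\mathbb{E}|\Delta_j z_t(x)|^2 \lesssim \sum_{|k|\sim 2^j} |k|^{2\delta-2-\gamma} \lesssim 2^{j(2\delta-\gamma)}.
\end{align*}
Gaussian hypercontractivity upgrades this to $\mathbb{E}|\Delta_j z_t(x)|^{2m}\lesssim 2^{jm(2\delta-\gamma)}$, and spatial stationarity together with Bernstein's inequality $\|\Delta_j f\|_{L^\infty}\lesssim 2^{2j/p}\|\Delta_j f\|_{L^p}$ (taking $p=2m$) gives
\begin{align*}
\bigl(\mathbb{E}\|\Delta_j z_t\|_{L^\infty}^{2m}\bigr)^{1/(2m)} \lesssim 2^{j/m}\cdot 2^{j(\delta-\gamma/2)}.
\end{align*}
Using the embedding $B^{1+\kappa}_{\infty,1}\hookrightarrow C^{1+\kappa}$, summing $2^{j(1+\kappa)}$ against the display above converges provided $\delta < -1-\kappa+\gamma/2-1/m$, which is met for $m$ large since $\delta<-1-\kappa+\gamma/2$ strictly; smaller $m$ is then handled by Jensen.

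For the time-H\"older part, I would compute the increment variance using the decomposition $z_t-z_s = \int_0^s (e^{-(t-r)\Lambda^\gamma}-e^{-(s-r)\Lambda^\gamma})\Lambda^{\delta-1}dW_r + \int_s^t e^{-(t-r)\Lambda^\gamma}\Lambda^{\delta-1}dW_r$ and the elementary bounds $|1-e^{-x}|\lesssim x^{\alpha_0}\wedge 1$, yielding
\begin{align*}
\mathbb{E}|\widehat{z_t-z_s}(k)|^2 \lesssim |k|^{2\delta-2-\gamma}\bigl((t-s)|k|^\gamma\bigr)^{2\alpha_0}\wedge |k|^{2\delta-2-\gamma},
\end{align*}
for any $\alpha_0\in[0,1/2]$. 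Repeating the hypercontractivity/Bernstein argument as above gives, for some small $\eta>0$,
\begin{align*}
\bigl(\mathbb{E}\|z_t-z_s\|_{C^{1+\eta}_x}^{2m}\bigr)^{1/(2m)} \lesssim (t-s)^{\alpha_0},
\end{align*}
provided the exponent $1+\eta+1/m+\delta-\gamma/2+\alpha_0\gamma$ arising in the dyadic sum is strictly negative. Picking $\alpha<\alpha_0<1/2$ and applying Kolmogorov's continuity theorem (with $2m$ so large that $2m\alpha_0>2m\alpha+1$) then delivers the desired $\seminorm{z}_{L^{2m}_\omega C^\alpha_t C^{1+\eta}_x}<\infty$.

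The main (though mild) obstacle is the bookkeeping that ensures both conditions $\delta<-1-\kappa+\gamma/2$ and $\delta<-1$ together permit every $\alpha<1/2$. In the regime $\gamma<2\kappa$ the first constraint is binding and the spatial $1+\kappa$ budget dominates, whereas in the regime $\gamma\geq 2\kappa$ it is $\delta<-1$ that caps the time exponent at $1/2$; it is precisely the meet $\wedge\,(-1)$ in the hypothesis that ensures the condition $\alpha_0\gamma < \gamma/2-\delta-1$ admits $\alpha_0$ arbitrarily close to $1/2$ in both regimes. Once this is checked, the two pieces combine to give the stated bound.
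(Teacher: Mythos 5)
The paper does not actually prove this lemma; it defers entirely to \cite{hofmanovaconvex6} (Proposition 4.1). Your proposal therefore supplies a self-contained argument where the paper supplies none, and the machinery you invoke -- Fourier-side variance computation, Gaussian hypercontractivity, Bernstein's inequality per Littlewood--Paley block, Kolmogorov continuity in time -- is precisely the standard toolkit such a result is built from, so it is almost certainly the same route as in the cited reference. I have checked the exponent bookkeeping: the dyadic block variance $\mathbb{E}|\Delta_j z_t(x)|^2\lesssim 2^{j(2\delta-\gamma)}$ counts the $\sim 2^{2j}$ lattice points correctly, hypercontractivity then gives the $L^{2m}$ bound with a constant depending on $m$, Bernstein with $p=2m$ contributes the factor $2^{j/m}$, and the Besov sum converges iff $1+\kappa+1/m+\delta-\gamma/2<0$, which $\delta<-1-\kappa+\gamma/2$ (strictly) accommodates for large $m$. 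For the time increments, the interpolation $|1-e^{-x}|\le x^{\theta}\wedge 1$ with $\theta\le 1$ on both pieces of the decomposition yields the $((t-s)|k|^\gamma)^{2\alpha_0}$ factor with $\alpha_0\le 1/2$, and you correctly observe that $\delta<-1$ is exactly what makes $(\gamma/2-\delta-1)/\gamma>1/2$, so that every $\alpha<1/2$ is within reach of Kolmogorov once $m$ is taken large; the two cases $\gamma\lessgtr 2\kappa$ are reconciled by the meet $\wedge(-1)$ in the hypothesis, as you note.

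One small loose end: you only write out the time-increment bound in $C^{1+\eta}_x$ for small $\eta$, which gives $\seminorm{z}_{L^{2m}_\omega C^\alpha_t C^{1+}_x}<\infty$, but the lemma also asserts $\norm{z}_{L^{2m}_\omega C_t C^{1+\kappa}_x}<\infty$, i.e.\ the supremum over $t$ inside the expectation. This is not delivered by the fixed-$t$ spatial bound alone; you should run the same increment estimate once more with the target space $C^{1+\kappa}_x$ and a small (rather than near-$1/2$) $\alpha_0>0$ -- the strict inequality $\delta<-1-\kappa+\gamma/2$ leaves room for some positive $\alpha_0$ and large $m$ to make $1+\kappa+1/m+\delta-\gamma/2+\alpha_0\gamma<0$ -- and then Kolmogorov in that space produces the $\sup_t$. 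This is implicit in your argument but worth spelling out, since the two norms in the statement require two slightly different parameter regimes for $(\alpha_0,\eta)$.
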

\begin{proof}
   Refer to \cite[Proposition 4.1]{hofmanovaconvex6}.
\end{proof}
Note that in this case, $z$ is a Gaussian random variable. As a direct consequence, from \eqref{eq:NormEstimateLimit} we conclude that the moments of our convex integration solution $g$ are sub-Gaussian. Indeed, by \eqref{eq:NormEstimateLimit}  we obtain
    \[
    \norm{g}_{L^{2m}_\omega C_{\rm loc} B^{1/2}_{\infty, 1}}\lesssim  \seminorm{z}_{L^{2m}_\omega C^\alpha_t C^{1+}_x} + \norm{z}_{L^{2m}_\omega C_t C^{1+\kappa}_x}  \lesssim C (2m)^{1/2}
    \]
where in the last step, we exploited that also the H\"older modulus of continuity of a Gaussian process admits sub-Gaussian moments (see for example \cite{hoelderexponential}). In particular, this quantified moment bound in $m$ implies that $g$ admits square expontential moments, i.e. there exists an $\epsilon>0$ such that 
\[
\mathbb{E}\left[\exp{\epsilon \norm{g}_{C_{\rm loc}B^{1/2}_{\infty, 1}}^2}\right]<\infty, 
\]
which is a result not available in the setting of \cite{hofmanovaconvex6}. 
\subsection{Noises constructed via Young integrals}
In this section, we show how to construct the stochastic convolution $z$ for a given noise $W$ via sewing techniques.  The latter constrains us to work on a compact time interval $[0, T]$, fixed throughout this subsection. We are basically able to cover the class of processes which admit some temporal H\"older regularity, provided the spatial coloring is strong enough, i.e. $\delta$ sufficiently negative. Let  $(\beta^k)_k$ be a sequence of independent copies of a stochastic process $\beta$ such that  
\begin{equation}
     \mathbb{E}\left[\left(\sup_{s\neq t\in [0, T]} \frac{|\beta_t-\beta_s|}{|t-s|^H}\right)^{2m}\right]<\infty.
     \label{hoelder reg noise}
\end{equation}
For $(e_k)_k$ an orthonormal basis of $L^2(\mathbb{T}^2)$, let us consider a noise of the form
\[
W_t=\sum_{|k|=0}^\infty e_k \beta^k_t.
\]
We then have the following
\begin{lemma}
\label{stoch conv via young}
    Let $W$ be as above. Suppose $\gamma<3/2$ and $\kappa>1/2$ and $H\in (0, 1)$. Suppose moreover $\delta<-2-(\gamma\vee \kappa+\gamma(1-H))$.  Then 
    \[
     \seminorm{z}_{L^2_\omega C^H_{[0, T]} C^{1+}_x} + \norm{z}_{L^2_\omega C_{[0, T]} C^{1+\kappa}_x}\lesssim  \mathbb{E}\left[ [\beta]^{2m}_{C^H}\right]^{\frac{1}{2m}}
    \]
\end{lemma}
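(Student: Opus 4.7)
My plan is to factor everything through the Fourier basis. Identifying $(e_k)_k$ with the Fourier basis of $L^2(\mathbb{T}^2)$ (so that $\Lambda^{\delta-1}$ and $e^{-r\Lambda^\gamma}$ act diagonally), the stochastic convolution decomposes formally as
\[
z_t = \sum_k |k|^{\delta-1}\,z^k_t\, e_k, \qquad z^k_t := \int_0^t e^{-(t-s)|k|^\gamma}\, d\beta^k_s,
\]
reducing the problem to uniform-in-$k$ scalar estimates. Since $s \mapsto e^{-(t-s)|k|^\gamma}$ is smooth on $[0,t]$ while $\beta^k \in C^H$ with $H>0$, the Sewing/Young framework defines $z^k_t$ pathwise; equivalently, integration by parts (assuming $\beta^k_0 = 0$), combined with the identity $|k|^\gamma\int_0^t e^{-(t-s)|k|^\gamma}ds = 1 - e^{-t|k|^\gamma}$, yields the representation
\[
z^k_t = e^{-t|k|^\gamma}\beta^k_t + |k|^\gamma \int_0^t e^{-(t-s)|k|^\gamma}(\beta^k_t - \beta^k_s)\,ds.
\]

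The central step is a semigroup-regularity gain: bounding $|\beta^k_t - \beta^k_s| \leq [\beta^k]_{C^H}(t-s)^H$, substituting $u = (t-s)|k|^\gamma$, and using $\sup_{u\ge 0}u^H e^{-u} \le C$,
\[
|z^k_t| \leq e^{-t|k|^\gamma} t^H [\beta^k]_{C^H} + |k|^{-\gamma H}[\beta^k]_{C^H}\int_0^{t|k|^\gamma} u^H e^{-u}\,du \lesssim |k|^{-\gamma H}[\beta^k]_{C^H}.
\]
For temporal increments, I would split $z^k_t - z^k_{t'}$ into $\int_0^{t'}(e^{-(t-s)|k|^\gamma}-e^{-(t'-s)|k|^\gamma})d\beta^k_s + \int_{t'}^t e^{-(t-s)|k|^\gamma}d\beta^k_s$ and run an analogous integration-by-parts argument on each piece, arriving at $|z^k_t - z^k_{t'}| \lesssim (t-t')^H[\beta^k]_{C^H}$ uniformly in $k$.

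I would then assemble via Sobolev embedding. Choosing $s$ slightly larger than $2+\kappa$ so that $H^s(\mathbb{T}^2) \hookrightarrow C^{1+\kappa}(\mathbb{T}^2)$, Parseval together with the i.i.d. uniformity $\mathbb{E}[\beta^k]_{C^H}^2 = \mathbb{E}[\beta]_{C^H}^2$ gives
\[
\mathbb{E}\|z_t\|^2_{C^{1+\kappa}} \lesssim \mathbb{E}\|z_t\|^2_{H^s} = \sum_k |k|^{2(s+\delta-1)}\mathbb{E}|z^k_t|^2 \lesssim \Big(\sum_k|k|^{2(s+\delta-1-\gamma H)}\Big)\mathbb{E}[\beta]_{C^H}^2,
\]
which converges on $\mathbb{Z}^2 \setminus \{0\}$ provided $\delta < \gamma H - s$, in turn implied by the assumed $\delta < -2 - (\gamma\vee\kappa + \gamma(1-H))$. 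The H\"older-in-time seminorm $\seminorm{z}_{C^H_{[0,T]}C^{1+\eta}_x}$ is handled in exactly the same way starting from $|z^k_t - z^k_{t'}| \lesssim (t-t')^H[\beta^k]_{C^H}$ and some $s > 2+\eta$ with $\eta>0$ small.

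Finally, I would upgrade the pointwise-in-$t$ $\mathbb{E}\|z_t\|^2$ estimates to the stronger $L^2_\omega C_{[0,T]}$ and $L^2_\omega C^H_{[0,T]}$ norms on the left-hand side by Kolmogorov's continuity theorem (or Garsia–Rodemich–Rumsey): raising the above calculation to the $2m$-th power produces $\mathbb{E}\|z_t - z_{t'}\|^{2m} \lesssim (t-t')^{2mH}\mathbb{E}[\beta]_{C^H}^{2m}$, and for $2mH > 1$ this extracts the claimed H\"older/sup bounds, explaining the $(2m)$-th moment on the right-hand side. The hard part is securing the semigroup-regularity gain $|z^k_t| \lesssim |k|^{-\gamma H}[\beta^k]_{C^H}$: a direct Young bound only yields $|z^k_t| \lesssim |k|^{\gamma(1-H)}[\beta^k]_{C^H}$, substantially tightening the admissible range of $\delta$. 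The integration-by-parts rewriting together with the rescaling $u=(t-s)|k|^\gamma$ is precisely what transfers the temporal H\"older regularity of $\beta$ into spatial decay through the dissipative semigroup, rendering all spectral sums finite under the hypothesis on $\delta$.
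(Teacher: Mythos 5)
Your proof is correct, but takes a genuinely different — and in fact sharper — route than the paper's. The paper also works in Fourier and uses sewing, but it invokes the generic Sewing Lemma a priori bound on the germ $A^t_{s,r}=e^{-\lambda_k^\gamma(t-s)}(\beta_r-\beta_s)$, which yields the \emph{growing} factor $(\lambda_k^\gamma)^{1-H+\epsilon}$ in the one-dimensional estimate
\[
\Big|\int_s^t e^{-\lambda_k^\gamma(t-r)}\,d\beta^k_r\Big|\lesssim |t-s|^H[\beta^k]_{C^H}+|t-s|^{1+\epsilon}[\beta^k]_{C^H}(\lambda_k^\gamma)^{1-H+\epsilon},
\]
because the sewing bound cannot see the dissipative structure. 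Your integration-by-parts representation $z^k_t = e^{-t|k|^\gamma}\beta^k_t + |k|^\gamma\int_0^t e^{-(t-s)|k|^\gamma}(\beta^k_t-\beta^k_s)\,ds$ instead extracts a genuine \emph{decay} $|z^k_t|\lesssim |k|^{-\gamma H}[\beta^k]_{C^H}$ from the semigroup via the rescaling $u=(t-s)|k|^\gamma$. Consequently your spectral sum converges under the condition $\delta < \gamma H - 2 - \kappa$ for the $C^{1+\kappa}_x$ piece and $\delta<-2-\eta$ for the $C^H_t C^{1+\eta}_x$ piece, both of which are \emph{weaker} than the paper's hypothesis $\delta < -2 - (\gamma\vee\kappa + \gamma(1-H))$. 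So you prove the lemma under the stated hypothesis (since it implies yours), and in fact establish a stronger version of it. The trade-off is that the paper's argument is a black-box application of an off-the-shelf a priori bound, while yours requires recognizing and exploiting the smoothing of the semigroup.

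One small imprecision in the last step: the appeal to Kolmogorov's continuity theorem is both unnecessary and, as stated, would only give $C^{H-1/(2m)-\varepsilon}_{[0,T]}$ rather than $C^H_{[0,T]}$. You do not need it. Your increment bound $|z^k_t - z^k_{t'}|\lesssim (t-t')^H[\beta^k]_{C^H}$ and your sup bound $|z^k_t|\lesssim|k|^{-\gamma H}[\beta^k]_{C^H}$ are \emph{pathwise}, so you can bound $\seminorm{z(\omega)}_{C^H_{[0,T]}H^s}$ and $\norm{z(\omega)}_{C_{[0,T]}H^s}$ directly by $\bigl(\sum_k |k|^{2(s+\delta-1)}[\beta^k(\omega)]_{C^H}^2\bigr)^{1/2}$ (with $|k|^{-\gamma H}$ decay inserted where available), then take the $L^2_\omega$ norm, use independence/Minkowski to push the expectation through the sum, and finish with Jensen to compare $\mathbb{E}[[\beta]_{C^H}^2]^{1/2}$ to $\mathbb{E}[[\beta]_{C^H}^{2m}]^{1/(2m)}$. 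This is also essentially what the paper does (estimate $\|z_t - z_s\|_{H^\sigma}$ pathwise, then take expectations), so no continuity theorem is needed on either route.
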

\begin{example}
    A canonical example for $\beta$ in the above would be a $H+\epsilon$-fractional Brownian for $H\in (0, 1)$ and $\epsilon>0$ arbitrarily small. Indeed, in this case \eqref{hoelder reg noise} is finite and moreover also enjoys sub-Gaussian moment explosion. Similar to the above case of cylindrical Wiener noise,  this implies that our constructed convex integration solution $g$ enjoys square exponential moments in the sense that for some~$\epsilon'>0$, we have
\[
\mathbb{E}\left[\exp{\epsilon' \norm{g}_{C_{[0, T]}B^{1/2}_{\infty, 1}}^2}\right]<\infty.
\]
We stress that the construction is feasible for all $H\in (0, 1)$, as the integrand of the stochastic convolution is smooth in time. 
\end{example}
\begin{remark}
    Note that in the case $H=1/2$, which recovers the case of cylindrical Wiener noise, the condition in the above lemma is worse than that given in Lemma \ref{regularity of stoch conv, space time white noise }. Besides the fact that we loose optimality due to Sobolev embeddings, this would still be natural to expect, as the sewing arguments are entirely pathwise and thus do not capture any stochastic cancellations. 
\end{remark}

\begin{proof}[Proof of Lemma \ref{stoch conv via young}]
    Let $(e_k)_k$ be the sequence of eigenvectors of $\Lambda$ and $(\lambda_k)_k$ be the corresponding sequence of eigenvalues. Note that $\lambda_k\simeq |k|$. We begin by showing that the expression
    \begin{equation}
         \int_0^t e^{-\lambda_k^\gamma(t-r)}{\rm d}\beta_r
         \label{sewing start}
    \end{equation} 
    is well defined as the sewing of the germ $A^t_{s,r}=e^{-\lambda_k^\gamma(t-s)}(\beta_r-\beta_s)$. For the reader unfamiliar with sewing techniques and the corresponding terminologies, we refer to section \ref{sewing section} in the appendix. Let us point out that $A^t:\Delta_2([0, t))\to \mathbb{R}$ is indeed a well defined object for any $t\in [0, T]$, since - contrary to Volterra settings - no singularities in $t\in [0, T]$ are present.  Using the Schauder estimate Lemma \ref{schauder}, we note that $0<\epsilon<H$
    \[
    |(\delta A^t)_{s, u, r}|=|(\beta_r-\beta_u)e^{-\lambda_k^\gamma t}(e^{\lambda_k^\gamma s}-e^{\lambda_k^\gamma u})|\lesssim [\beta]_{C^H}|r-u|^H(\lambda_k^\gamma)^{1-H+\epsilon}|u-s|^{1-H+\epsilon}.
    \]
    By the Sewing Lemma \ref{sewing}, we therefore obtain the existence of an associated integral $(\mathcal{I}A^t)$, which we use to define \eqref{sewing start}
    \[
    (\mathcal{I}A^t)_t=:\int_0^t e^{-\lambda_k^\gamma(t-r)}{\rm d}\beta_r.
    \]
Using the a priori bound \eqref{sewing a priori} that comes with the Sewing Lemma \ref{sewing}, we have 
    \[
       |(\mathcal{I}A^t)_t-   (\mathcal{I}A^t)_s|\lesssim |t-s|^H[\beta]_{C^H}+|t-s|^{1+\epsilon}[\beta]_{C^H}(\lambda_k^\gamma)^{1-H+\epsilon}.
    \]
    With the definition of the one dimensional stochastic convolution at hand, we define
    \[
    z_t:=\sum_{|k|=1}^\infty \lambda_k^{\delta-1}\int_0^t e^{-\lambda_k^\gamma(t-s)}{\rm d}\beta^k_s e_k
    \]
  We note that
    \begin{equation*}
        \begin{split}
                \norm{z_t-z_s}_{H^\sigma}^2&=\sum_{|k|=1}^\infty \lambda_k^{2(\delta-1)} |k|^{2\sigma} \left(\int_0^t e^{-\lambda_k^\gamma(t-r)}{\rm d}\beta^k_r- \int_0^s e^{-\lambda_k^\gamma(s-r)}{\rm d}\beta^k_r\right)^2\\
                &\lesssim \sum_{|k|=1}^\infty \lambda_k^{2(\delta-1)} |k|^{2\sigma} \left(\int_s^t e^{-\lambda_k^\gamma(t-r)}{\rm d}\beta^k_r\right)^2\\
                &\quad+\sum_{|k|=1}^\infty \lambda_k^{2(\delta-1)} |k|^{2\sigma} (e^{-\lambda_k^\gamma(t-s)}-1)^2\left(\int_0^s e^{-\lambda_k^\gamma(s-r)}{\rm d}\beta^k_r\right)^2
        \end{split}
    \end{equation*}
    We can now use the above estimate to deduce that 
    \[
    \left|\int_s^t e^{-\lambda_k^\gamma(t-r)}{\rm d}\beta^k_r\right|=|(\mathcal{I}A^t)_t-   (\mathcal{I}A^t)_s|\lesssim |t-s|^H[\beta^k]_{C^H}+|t-s|^{1+\epsilon}[\beta^k]_{C^H}(\lambda_k^\gamma)^{1-H+\epsilon}
    \]
    and 
    \[
\left|\int_0^s e^{-\lambda_k^\gamma(s-r)}{\rm d}\beta^k_r\right|=|(\mathcal{I}A^s)_s-   (\mathcal{I}A^s)_0|\lesssim T^H[\beta^k]_{C^H}+T^{1+\epsilon}[\beta^k]_{C^H}(\lambda_k^\gamma)^{1-H+\epsilon}.
    \]
    We therefore conclude that 
    \begin{equation*}
        \begin{split}
            &\mathbb{E}\left[\sum_{|k|=1}^\infty \lambda_k^{2(\delta-1)} |k|^{2\sigma} \left(\int_s^t e^{-\lambda_k^\gamma(t-r)}{\rm d}\beta^k_r\right)^2\right]\\
            &\lesssim   \mathbb{E}\left[[\beta]_{C^H}^2\right]|t-s|^{2H}\sum_{|k|=1}^\infty \lambda_k^{2(\delta-1)} |k|^{2\sigma}(\lambda_k^\gamma)^{2(1-H+\epsilon)},
        \end{split}
    \end{equation*}
    where the above sum converges provided 
    \begin{equation}
        \label{condition 1}
            2(\delta-1)+2\sigma+2\gamma(1-H+\epsilon)<-2.
    \end{equation}
    Concerning the second sum, we obtain again thanks to Schauder estimates $|e^{-\lambda_k^\gamma(t-s)}-1|\lesssim \lambda_k^{\gamma H}|t-s|^H$ (Lemma \ref{schauder}) and thus 
     \begin{equation*}
        \begin{split}
 & \mathbb{E}\left[\sum_{|k|=1}^\infty \lambda_k^{2(\delta-1)} |k|^{2\sigma} (e^{-\lambda_k^\gamma(t-s)}-1)^2\left(\int_0^s e^{-\lambda_k^\gamma(s-r)}{\rm d}\beta^k_r\right)^2\right]\\
 &\lesssim \mathbb{E}\left[[\beta]_{C^H}^2\right] \sum_{|k|=1}^\infty \lambda_k^{2(\delta-1)} |k|^{2\sigma} (\lambda_k^\gamma)^{2H+2(1-H+\epsilon)}|t-s|^{2H}
      \end{split}
    \end{equation*}
    where the above sum converges provided
    \begin{equation}
        \label{condition 2}
        2(\delta-1)+2\sigma+2\gamma(1+\epsilon)<-2.
    \end{equation}

Noting that \eqref{condition 2} implies \eqref{condition 1}, we can conclude that $z\in L^2_\omega C^H_tH^\sigma_x$ provided \eqref{condition 2} holds. Using the Sobolev embedding $H^\sigma\hookrightarrow C^{1+\epsilon}$ for $\sigma>2$ and $\epsilon>0$ sufficiently small, we have $z\in L^2_\omega C^H_tC^{1+\epsilon}_x$. Concerning the other regularity claim on $z$, note that it suffices to ask for arbitrarily small H\"older regularity, meaning that in this case we may use the Schauder estimate $|e^{-\lambda_k^\gamma(t-s)}-1|\lesssim \lambda_k^{\gamma \epsilon}|t-s|^\epsilon$, which lets \eqref{condition 2} collapse to \eqref{condition 1}. We therefore obtain that $z\in L^2_\omega C^\epsilon_t H^\sigma_x$ for some $\epsilon>0$, provided \eqref{condition 1} holds. Using again the Sobolev embedding $H^{\sigma}\hookrightarrow C^{1+\kappa+\epsilon}$ for $\sigma>2+\kappa$, we obtain the claim. 
\end{proof}

\subsection{Another class of general stochastic processes}
We give another example of a class of stochastic processes $W$ that assures the existence of a stochastic convolution $z$ and that can assure the finiteness of \eqref{normz}. Contrary to the previous subsection, we will provide a construction in spirit of that of the It\^o integral (i.e. in particular not a pathwise one), yet going beyond the martingale and Gaussian setting.\\
\\
Let $\beta:\Omega \times [0, \infty)\to \mathbb{R}$  be a stochastic process which starts in zero, is continuous and centered and has independent increments. We moreover assume that 
\begin{equation}
    \label{moment assumptions}
    \mbox{Var}(\beta_t-\beta_s)=(t-s), \qquad \mathbb{E}[|\beta_t-\beta_s|^{4}]\leq C|t-s|^{1+\upsilon}
\end{equation}
for some $\upsilon\in (0,1]$. Note that the above are obviously satisfied for $\beta$ being a Brownian motion with $\upsilon=1$, however $\beta$ need not be a Brownian motion or even a Gaussian process (if it was a Gaussian process, the above conditions would imply that $\beta$ is a Brownian motion). 
We consider again
\[
W_t=\sum_{k}e_k \beta^k_t
\]
where $(e_k)_k$ is an orthonormal basis of $L^2(\mathbb{T}^2)$ and $(\beta^k)_k$ are independent copies of $\beta$, which we describe below. In the case of deterministic integrands which we are concerned with, the above conditions on $\beta$ suffice to establish an analogue of the Paley-Wiener integral by means of an isometry. Time regularity of the so obtained integral can then be recovered by means of  the Kolmogorov continuity theorem (which is why we require a fourth moment assumption). 
\begin{lemma}
    Let $f:[0, t]\to \mathbb{R}$ be a deterministic elementary function meaning for some $N\in \mathbb{N}$ and $0\leq t_1< \dots t_N\leq t$ and $c_k\in \mathbb{R}$, we have
    \[
    f(t)=\sum_{k=0}^{N-1} c_k \mathbf{1}_{[t_k, t_{k+1}]}.
    \]
    Suppose $\beta$ as above, i.e. a continuous centered process starting in zero satisfying \eqref{moment assumptions}. Then we have 
    \[
    \mathbb{E}\left(\int_0^t f_s {\rm d}\beta_s \right)^2= \int_0^t f_s^2{\rm d}s
    \]
\end{lemma}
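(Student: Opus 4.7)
The plan is to define the integral against $\beta$ for elementary functions in the natural step-function way and then perform a direct second-moment computation, exploiting the independence of increments together with the centering and variance assumptions in~\eqref{moment assumptions}.

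First, for the elementary function $f = \sum_{k=0}^{N-1} c_k \mathbf{1}_{[t_k, t_{k+1}]}$, I would simply set
\begin{align*}
\int_0^t f_s \, \dd \beta_s := \sum_{k=0}^{N-1} c_k (\beta_{t_{k+1}} - \beta_{t_k}).
\end{align*}
One checks standard consistency of this definition under refinements of the partition (for disjoint intervals this is immediate; for overlapping partitions one uses linearity and the telescoping property of increments). This step is routine and is where one would typically also verify that the definition is independent of the chosen representation of $f$.

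Next, I would expand the square and compute
\begin{align*}
\mathbb{E}\left[ \left( \int_0^t f_s \, \dd\beta_s \right)^2 \right] = \sum_{k,l=0}^{N-1} c_k c_l \, \mathbb{E}\big[(\beta_{t_{k+1}} - \beta_{t_k})(\beta_{t_{l+1}} - \beta_{t_l})\big].
\end{align*}
For the off-diagonal terms $k \neq l$, the intervals $[t_k, t_{k+1}]$ and $[t_l, t_{l+1}]$ are disjoint, so the two increments are independent by assumption on $\beta$. Combined with the fact that $\beta$ is centered (and hence each increment has mean zero, using again independence of increments and $\beta_0 = 0$), the expectation factorizes into a product of zeros. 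For the diagonal terms $k=l$, the variance assumption in~\eqref{moment assumptions} gives $\mathbb{E}[(\beta_{t_{k+1}} - \beta_{t_k})^2] = t_{k+1} - t_k$. Hence
\begin{align*}
\mathbb{E}\left[ \left( \int_0^t f_s \, \dd\beta_s \right)^2 \right] = \sum_{k=0}^{N-1} c_k^2 \, (t_{k+1} - t_k) = \int_0^t f_s^2 \, \dd s,
\end{align*}
which is the claim.

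There is no real obstacle here, as the proof is essentially the standard It\^o-isometry computation for step functions, requiring only the centering and independence of increments (to kill cross terms) together with the variance normalization (for the diagonal). The fourth-moment bound in~\eqref{moment assumptions} plays no role at this stage; it will only enter later when extending the integral to more general integrands and invoking Kolmogorov continuity to obtain time regularity of the resulting stochastic convolution $z$.
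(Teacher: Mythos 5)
Your proof is correct and takes essentially the same route as the paper, which simply states that the result follows from a direct computation as in the case of the It\^o isometry, exploiting that the coefficients are deterministic. You have merely spelled out that computation in full, which matches the intended argument.
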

\begin{proof}
    The proof follows from a direct computation as in the case of the It\^o isometry, exploiting that the coefficients $(c_k)_{k\leq N}$ are deterministic. 
\end{proof}
As the elementary functions are dense in $L^2([0, t])$, we can use the above isometry to define the Paley-Wiener integral for $f\in L^2([0, t])$. Indeed, if $f\in L^2([0, t])$, we find a sequence of elementary function $(f^n)_n$ such that $\norm{f^n-f}_{L^2}\to 0$. Thus 
\[
\mathbb{E}\left(\int_0^t f^n_s-f^m_s {\rm d}\beta_s \right)^2= \int_0^t (f^n_s-f^m_s)^2{\rm d}s
\]
i.e. the $\left(\int_0^t f^n_s{\rm d}\beta_s \right)$ is Cauchy in $L^2(\Omega)$, meaning we obtain a limit object for which the above isometry carries over. In particular, we have 
\[
 \mathbb{E}\left(\int_0^t e^{-\lambda(t-s)} {\rm d}\beta_s \right)^2= \int_0^t e^{-2\lambda(t-s)}{\rm d}s=\frac{1}{2\lambda}(1-e^{-\lambda t}).
\]
Moreover, we can use the estimate on the fourth moment in \eqref{moment assumptions} to deduce the existence of a H\"older continuous modification of the integral process via Kolmogorov's continuity theorem.

\begin{lemma}
    Let $f:[0, T]\to \mathbb{R}$ be bounded. We have 
    \[
    \mathbb{E}\left(\int_s^tf_r{\rm d}\beta_r \right)^4\leq 6\left(\int_s^t f^2_r{\rm d}r\right)^2+\norm{f}_\infty^{4-2(1+\upsilon)}\left(\int_s^t f_r^2{\rm d}r\right)^{1+\upsilon}.
    \]
    Consequently, the process $X_t=\int_0^tf_s{\rm d}\beta_s$ admits a $\frac{\upsilon}{4}$-H\"older continuous modification. 
    \label{hoelder stoch int}
\end{lemma}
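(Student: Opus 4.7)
The plan is to establish the fourth moment bound first for elementary integrands by a direct combinatorial expansion exploiting independence of increments, then extend it to arbitrary bounded $f$ by the same $L^2$-approximation argument used to construct the Paley--Wiener integral, and finally deduce the Hölder regularity from Kolmogorov's continuity theorem applied to the resulting increment bound.

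First I would fix $f = \sum_{k=0}^{N-1} c_k \mathbf{1}_{[t_k,t_{k+1}]}$ on a partition refining $\{s,t\}$, set $\Delta\beta_k := \beta_{t_{k+1}} - \beta_{t_k}$ and $\Delta t_k := t_{k+1} - t_k$, and expand
\[
\mathbb{E}\Bigl(\sum_k c_k \Delta\beta_k\Bigr)^4 = \sum_{i,j,k,l} c_i c_j c_k c_l \,\mathbb{E}[\Delta\beta_i \Delta\beta_j \Delta\beta_k \Delta\beta_l].
\]
Independence of increments together with centering forces the expectation to vanish whenever some index appears an odd number of times in the multiset $\{i,j,k,l\}$, so only two patterns survive: the diagonal $i=j=k=l$, and the two-pair configurations $\{i,j,k,l\}=\{a,a,b,b\}$ with $a\neq b$. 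Counting the $\binom{4}{2}=6$ orderings per unordered pair and invoking $\mathbb{E}[\Delta\beta_a^2]=\Delta t_a$ from~\eqref{moment assumptions}, the two-pair contribution is bounded by a constant multiple of $(\int_s^t f^2 \dd r)^2$. For the diagonal $\sum_a c_a^4 \,\mathbb{E}[\Delta\beta_a^4]$, I apply the fourth-moment hypothesis followed by the two elementary inequalities
\[
\sum_a c_a^4 (\Delta t_a)^{1+\upsilon} \leq \|f\|_\infty^{4-2(1+\upsilon)} \sum_a (c_a^2 \Delta t_a)^{1+\upsilon} \leq \|f\|_\infty^{4-2(1+\upsilon)} \Bigl(\int_s^t f^2 \dd r\Bigr)^{1+\upsilon},
\]
where the first uses $\upsilon\in(0,1]$ so that $4-2(1+\upsilon)\geq 0$, and the second uses $\sum_a x_a^p \leq (\sum_a x_a)^p$ for $p\geq 1$ and $x_a\geq 0$ (a consequence of $x_a \leq \sum_b x_b$).

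To reach arbitrary bounded $f$ I would approximate in $L^2([s,t])$ by elementary $f^n$ with $\|f^n\|_\infty \leq \|f\|_\infty$ (obtained e.g.\ by dyadic averaging combined with truncation); the Paley--Wiener isometry then yields $\int_s^t f^n\,\dd\beta \to \int_s^t f\,\dd\beta$ in $L^2(\Omega)$, so after extracting an a.s.\ subsequence Fatou's lemma handles the left-hand side while continuity of $f\mapsto \int f^2$ handles the right. For the Hölder assertion I then apply the resulting bound to the increments of $X_t = \int_0^t f_s \, \dd\beta_s$: using the trivial estimate $\int_s^t f^2 \, \dd r \leq \|f\|_\infty^2 |t-s|$, the fourth moment bound collapses to $\mathbb{E}|X_t-X_s|^4 \lesssim \|f\|_\infty^4 |t-s|^{1+\upsilon}$, and Kolmogorov's continuity theorem then produces a modification that is $\gamma$-Hölder continuous for every $\gamma < \upsilon/4$, which is exactly the asserted regularity.

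The hard part will be the combinatorial bookkeeping in the fourth moment expansion: since $\beta$ need not be Gaussian, Wick's theorem is unavailable and one has to verify by hand --- using only independence and centering of increments --- that every summand with an unpaired factor vanishes, and then correctly account for the multiplicities of the surviving two-pair and diagonal patterns. Everything else, notably the approximation step and the final application of Kolmogorov's theorem, is routine once this combinatorial identity is in place.
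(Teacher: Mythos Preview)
Your proposal is correct and follows essentially the same route as the paper: expand the fourth moment for elementary integrands, use independence and centering to reduce to the two-pair and diagonal patterns, bound the diagonal via $c_a^4(\Delta t_a)^{1+\upsilon}\le \|f\|_\infty^{2-2\upsilon}(c_a^2\Delta t_a)^{1+\upsilon}$ together with $\sum x_a^{1+\upsilon}\le(\sum x_a)^{1+\upsilon}$, extend by density, and conclude via Kolmogorov. Your write-up is in fact slightly more careful than the paper's, which only explicitly names the $(1,1,1,1)$ and $(3,1)$ vanishing cases and passes over the $L^4$ density step in one line.
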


\begin{proof}
   By density, we may argue again through approximation via elementary functions: let 
    \[
    f^N_t=\sum_{k=1}^N c_k \mathbf{1}_{[t_k, t_{k+1}]}(t).
    \]
We have 
\begin{equation*}
    \begin{split}
            &\mathbb{E}\left(\int_s^tf^N_r{\rm d}\beta_r \right)^4 \\
            &=\sum_{k, j, n, m=1}^Nc_kc_jc_nc_m \mathbb{E}(\beta_{t_{k+1}}-\beta_{t_k})(\beta_{t_{j+1}}-\beta_{t_j})(\beta_{t_{n+1}}-\beta_{t_n})(\beta_{t_{m+1}}-\beta_{t_m}).
    \end{split}
\end{equation*}
 Observe that due to the independent increments and centeredness, the above expectation vanishes whenever all four running indices $k, j, n, m$ are distinct and whenever three of them coincide but one is distinct. We are therefore left with 
 \begin{equation*}
    \begin{split}
            \mathbb{E}\left(\int_s^tf^N_r{\rm d}\beta_r \right)^4 &=6\sum_{k\neq j}^Nc_k^2c_j^2 \mathbb{E}(\beta_{t_{k+1}}-\beta_{t_k})^2(\beta_{t_{j+1}}-\beta_{t_j})^2+\sum_{k=1}^Nc_k^4\mathbb{E}(\beta_{t_{k+1}}-\beta_{t_k})^4\\
            &\leq 6\sum_{k=1}^Nc_k^2 (t_{k+1}-t_k)\sum_{j=1}^N c_j (t_{j+1}-t_j)+\sum_{k=1}^N c_k^4 (t_{k+1}-t_k)^{1+\upsilon}\\
            &\leq 6\left(\int_s^t(f_r^N)^2dr\right)^2+\norm{f^N}_{\infty}^{4-2(1+\upsilon)}\sum_{k=1}^N(c_k^2(t_{k+1}-t_k))^{1+\upsilon}\\
            &\leq  6\left(\int_s^t(f_r^N)^2dr\right)^2+\norm{f^N}_{\infty}^{4-2(1+\upsilon)}\left(\int_s^t(f_r^N)^2dr\right)^{1+\upsilon}.
    \end{split}
\end{equation*}
We then conclude by density and Kolmogorov's continuity theorem.
\end{proof}
From the above proof, it is clear that the statement extends to deterministic bounded functions depending also on the upper integration variable, that is for $f:\Delta_2\to \mathbb{R}$ bounded deterministic, we have 
\[
    \mathbb{E}\left(\int_s^tf(t,r){\rm d}\beta_r \right)^4\leq 6\left(\int_s^t f^2(t, r){\rm d}r\right)^2+\norm{f}_\infty^{4-2(1+\upsilon)}\left(\int_s^t f^2(t,r){\rm d}r\right)^{1+\upsilon}.
    \]
    Consequently, $|\int_s^tf(t,r){\rm d}\beta_r|\lesssim |t-s|^{\upsilon/4}$ $\mathbb{P}$-almost surely. This essentially gives us H\"older regularity of the stochastic integral. To conclude that the stochastic convolution is H\"older, we need to also obtain some H\"older regularity from the semigroup, for which we use again the Schauder estimate of Lemma \ref{schauder}. Indeed, for $z_t=\int_0^te^{-a(t-s)}{\rm d}\beta_s$ we have
\[
z_t-z_s=\int_s^t e^{-a(t-r)}{\rm d}\beta_r+\int_0^s (e^{-a(t-r)}-e^{-a(s-r)}){\rm d}\beta_s
\]
and therefore using Lemma \ref{hoelder stoch int} and the Schauder estimate \ref{schauder} (using it once with $\theta=(1+\upsilon)/4$ and once with $\theta=1/2$), we obtain
\begin{equation}
\label{fourth moment z}
    \begin{split}
        \mathbb{E}|z_t-z_s|^4&\lesssim (t-s)^2+(t-s)^{1+\upsilon}\\
        &+\left(\int_0^s (e^{-a(t-r)}-e^{-a(s-r)})^2{\rm d}r\right)^2+\left(\int_0^s (e^{-a(t-r)}-e^{-a(s-r)})^2{\rm d}r\right)^{1+\upsilon}\\
        &\lesssim (t-s)^2+(t-s)^{1+\upsilon}+a^{(1+\upsilon)}(t-s)^{(1+\upsilon)}.
    \end{split}
\end{equation}
Hence, we can apply the Kolmogorov continuity theorem to obtain the existence of a continuous modification. In the infinite dimensional setting, we need to again make sure that we stay summable along Fourier modes but the calculation is essentially similar. 
\begin{lemma}
\label{regularity stoch conv 4th moment}
    Let $\beta:\Omega \times [0, \infty)\to \mathbb{R}$ be a centered, continuous stochastic process starting in zero satisfying \eqref{moment assumptions}. Consider as noise 
\[
W_t=\sum_k e_k \beta^k_t
\]
where $(e_k)$ is an orthonormal basis of $L^2(\mathbb{T}^2)$ of eigenvectors of $\Lambda$ and $(\beta^k)_k$ are independent copies of $\beta$. Then the process
\begin{equation}
    z_t:=\int_0^t e^{\Lambda^\gamma(t-s)}\Lambda^{\delta-1} {\rm d}W_s=\sum_{|k|=1}^\infty \int_0^t e^{-\lambda_k^\gamma(t-s)}\lambda_k^{\delta-1} {\rm d}\beta^k_s e_k
    \label{stoch conv 33}
\end{equation}
is well defined and we have for any $\epsilon>0$
\[
z\in L^4_\omega C^{\upsilon/4-\epsilon}_{[0, T]} H^\sigma_x
\]
provided the parameters satisfy 
\[
  4(\delta-1)+4\sigma+(1+\upsilon)\gamma<-2.
\]
\end{lemma}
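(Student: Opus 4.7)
The proof proceeds in three stages: (i) constructing the stochastic convolution via a Fourier mode decomposition, (ii) lifting the single-mode fourth-moment estimate~\eqref{fourth moment z} to a fourth-moment bound in $H^\sigma$, and (iii) invoking Kolmogorov's continuity theorem.

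First, I would define each Fourier component
\begin{align*}
z^k_t := \lambda_k^{\delta-1}\int_0^t e^{-\lambda_k^\gamma(t-r)}\,\dd \beta^k_r
\end{align*}
via the Paley--Wiener-type integral constructed in Lemma~\ref{hoelder stoch int}, and set $z_t := \sum_{|k|\geq 1} z^k_t \, e_k$. Since the $(\beta^k)_k$ are independent, the modes $(z^k)_k$ are also independent; a direct application of the Paley--Wiener isometry, together with summability in $\mathbb{Z}^2$, ensures that $z_t \in L^2_\omega H^\sigma_x$ for each fixed $t$.

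The heart of the argument is step (ii). Plugging $a = \lambda_k^\gamma$ into~\eqref{fourth moment z} and incorporating the prefactor $\lambda_k^{\delta-1}$ yields the mode-wise bound
\begin{align*}
\mathbb{E}|z^k_t - z^k_s|^4 \lesssim \lambda_k^{4(\delta-1)}\bigl(1 + \lambda_k^{\gamma(1+\upsilon)}\bigr)|t-s|^{1+\upsilon}.
\end{align*}
Expanding $\|z_t - z_s\|_{H^\sigma}^2 = \sum_k \lambda_k^{2\sigma}|z^k_t - z^k_s|^2$ and using the independence of the modes to factor the off-diagonal expectations gives
\begin{align*}
\mathbb{E}\|z_t - z_s\|_{H^\sigma}^4 \leq \sum_k \lambda_k^{4\sigma}\mathbb{E}|z^k_t - z^k_s|^4 + \Bigl(\sum_k \lambda_k^{2\sigma}\mathbb{E}|z^k_t - z^k_s|^2\Bigr)^2.
\end{align*}
The diagonal contribution is controlled directly by the above mode-wise bound, while the second-moment factors in the off-diagonal term are estimated through an It\^o-type isometry, combined with a Schauder estimate (Lemma~\ref{schauder}) that absorbs the difference of the two semigroup factors $e^{-\lambda_k^\gamma(t-r)} - e^{-\lambda_k^\gamma(s-r)}$ into a H\"older-in-time factor of order $(1+\upsilon)/2$. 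Using $\lambda_k \simeq |k|$ together with the 2D convergence criterion $\sum_{|k|\geq 1} |k|^{-a} < \infty \Leftrightarrow a > 2$, both sums reduce to the stated summability condition $4(\delta-1) + 4\sigma + (1+\upsilon)\gamma < -2$, leaving $\mathbb{E}\|z_t - z_s\|_{H^\sigma}^4 \lesssim |t-s|^{1+\upsilon}$.

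Finally, Kolmogorov's continuity theorem with $p = 4$ upgrades this moment bound to the existence of a H\"older continuous modification of $z$ with temporal exponent strictly less than $\upsilon/4$, valued in $H^\sigma_x$, which gives the claimed regularity $\upsilon/4 - \epsilon$ for any $\epsilon > 0$. The main technical obstacle is step (ii): one must arrange the Schauder splitting carefully enough that both the diagonal and the off-diagonal pieces are governed by the same summability condition. A crude Jensen reduction from the fourth to the second moment would yield a strictly stronger requirement on the parameters, so the key is to exploit the full cancellation between the two semigroup factors separately when bounding the second moments.
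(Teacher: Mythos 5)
Your overall structure---mode-wise Paley--Wiener construction, expansion of the fourth moment of $\norm{z_t-z_s}_{H^\sigma}$ into diagonal (fourth-moment) and off-diagonal (second-moment) contributions via independence of the $\beta^k$, then Kolmogorov's continuity theorem---matches the paper exactly, and your treatment of the diagonal sum is the same. The deviation lies in the off-diagonal sum. Writing $Y_k := \int_0^t e^{-\lambda_k^\gamma(t-r)}\,\dd\beta^k_r - \int_0^s e^{-\lambda_k^\gamma(s-r)}\,\dd\beta^k_r$, the paper bounds $\mathbb{E}[Y_k^2]\leq(\mathbb{E}[Y_k^4])^{1/2}$ via Cauchy--Schwarz and then applies~\eqref{fourth moment z}; you propose instead to compute $\mathbb{E}[Y_k^2]$ directly via the isometry together with Schauder estimates, and you flag that the Cauchy--Schwarz reduction yields a strictly stronger constraint. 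That observation is correct and your route is genuinely sharper than the one the paper actually takes: after Cauchy--Schwarz the worst off-diagonal term scales like $\sum_{k\neq 0}|k|^{2(\delta-1)+2\sigma+\gamma(1+\upsilon)/2}$, whose convergence requires $4(\delta-1)+4\sigma+(1+\upsilon)\gamma<-4$, strictly stronger than the stated $<-2$.

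However, you assert without computation that the direct isometry brings both sums to the stated condition, and you should verify this. Balancing the Schauder splitting so that both pieces of $\mathbb{E}[Y_k^2]$ carry $|t-s|^{(1+\upsilon)/2}$ gives $\mathbb{E}[Y_k^2]\lesssim\lambda_k^{\gamma(\upsilon-1)/2}|t-s|^{(1+\upsilon)/2}$, so the off-diagonal sum converges iff $4(\delta-1)+4\sigma+(\upsilon-1)\gamma<-4$. This is implied by the diagonal condition $4(\delta-1)+4\sigma+(1+\upsilon)\gamma<-2$ precisely when $\gamma\geq1$. For $\gamma<1$ the stated regime does not automatically control the off-diagonal sum under this route, and neither yours nor the paper's closes the argument there without either a stronger parameter restriction or a loss in the temporal H\"older exponent. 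Working out step~(ii) in full would have surfaced this delicacy.
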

Using again Sobolev embeddings, the above implies:
\begin{corollary}
   Let $\gamma\in (0, 3/2)$ and $\kappa>1/2$. Suppose that $\upsilon$ as above and 
   \[
   \delta<-3/2-(1+\upsilon)\gamma/4-\kappa.
   \]
   Then the process $z$ in \eqref{stoch conv 33} satisfies $z\in L^4_{\omega} C^{\upsilon/4-\epsilon}_{[0, T]} C^{1+\kappa}_x$. 
\end{corollary}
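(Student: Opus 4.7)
The corollary reduces to a direct combination of Lemma~\ref{regularity stoch conv 4th moment} with a Sobolev embedding on $\mathbb{T}^2$. My plan is to apply the lemma with a well-chosen spatial regularity exponent $\sigma$, then invoke $H^\sigma(\mathbb{T}^2)\hookrightarrow C^{1+\kappa}(\mathbb{T}^2)$ to transfer from Sobolev to H\"older scale.

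Concretely, on the two-dimensional torus one has $H^\sigma\hookrightarrow C^{1+\kappa}$ whenever $\sigma>2+\kappa$ (since $H^s(\mathbb{T}^2)\hookrightarrow C^{s-1-}(\mathbb{T}^2)$, see e.g.\ Bahouri--Chemin--Danchin). So I fix a small $\eta>0$ and set $\sigma=2+\kappa+\eta$. The condition of Lemma~\ref{regularity stoch conv 4th moment}, namely
\[
4(\delta-1)+4\sigma+(1+\upsilon)\gamma<-2,
\]
becomes after substitution and rearrangement
\[
\delta<-\tfrac{3}{2}-\kappa-\eta-\tfrac{(1+\upsilon)\gamma}{4}.
\]
Under the standing hypothesis $\delta<-\tfrac{3}{2}-\tfrac{(1+\upsilon)\gamma}{4}-\kappa$ one can thus choose $\eta>0$ sufficiently small so that this strict inequality remains valid. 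Lemma~\ref{regularity stoch conv 4th moment} then yields $z\in L^4_\omega C^{\upsilon/4-\epsilon}_{[0,T]}H^\sigma_x$, and the Sobolev embedding upgrades the spatial regularity to $C^{1+\kappa}$, giving the claim.

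There is no substantial analytic obstacle here: the construction of the stochastic convolution via the Paley--Wiener type isometry, the fourth moment estimates exploiting independent increments and centering, and the application of Kolmogorov's continuity theorem have already been absorbed into Lemma~\ref{regularity stoch conv 4th moment}. The only thing one has to verify carefully is that the $\eta$-margin in the Sobolev embedding is compatible with the strict inequality in the hypothesis, which it clearly is.
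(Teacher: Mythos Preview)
Your proof is correct and follows exactly the approach indicated in the paper, which simply states that the corollary follows from Lemma~\ref{regularity stoch conv 4th moment} via Sobolev embeddings. You have merely filled in the details of choosing $\sigma=2+\kappa+\eta$ and verifying that the strict inequality on $\delta$ leaves room for the small margin $\eta$.
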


\begin{proof}[Proof of Lemma \ref{regularity stoch conv 4th moment}]
    Note that 
    \begin{equation*}
        \begin{split}
            \norm{z_t-z_s}_{H^\sigma}^2=\sum_{|k|=1}^\infty \lambda_k^{2(\delta-1)} |k|^{2\sigma} \left(\int_0^t e^{-\lambda_k^\gamma(t-r)}{\rm d}\beta^k_r- \int_0^s e^{-\lambda_k^\gamma(s-r)}{\rm d}\beta^k_r\right)^2
        \end{split}
    \end{equation*}
    and thus 
      \begin{equation*}
        \begin{split}
            &\mathbb{E}\norm{z_t-z_s}_{H^\sigma}^4\\
            &=\sum_{k\neq j}^\infty \lambda_k^{2(\delta-1)} |k|^{2\sigma} \mathbb{E}\left(\int_0^t e^{-\lambda_k^\gamma(t-r)}{\rm d}\beta^k_r- \int_0^s e^{-\lambda_k^\gamma(s-r)}{\rm d}\beta^k_r\right)^2\\
            &\qquad\times\lambda_j^{2(\delta-1)} |j|^{2\sigma} \mathbb{E}\left(\int_0^t e^{-\lambda_j^\gamma(t-r)}{\rm d}\beta^j_r- \int_0^s e^{-\lambda_j^\gamma(s-r)}{\rm d}\beta^j_r\right)^2\\
            &\quad+\sum_{|k|=1}^\infty \lambda_k^{4(\delta-1)} |k|^{4\sigma} \mathbb{E}\left(\int_0^t e^{-\lambda_k^\gamma(t-r)}{\rm d}\beta^k_r- \int_0^s e^{-\lambda_k^\gamma(s-r)}{\rm d}\beta^k_r\right)^4\\
            &\lesssim \left(\sum_{|k|=1}^\infty \lambda_k^{2(\delta-1)}|k|^{2\sigma}\mathbb{E}\left(\int_0^t e^{-\lambda_k^\gamma(t-r)}{\rm d}\beta^k_r- \int_0^s e^{-\lambda_k^\gamma(s-r)}{\rm d}\beta^k_r\right)^2\right)^2\\
            &\quad+\sum_{|k|=1}^\infty \lambda_k^{4(\delta-1)} |k|^{4\sigma} \mathbb{E}\left(\int_0^t e^{-\lambda_k^\gamma(t-r)}{\rm d}\beta^k_r- \int_0^s e^{-\lambda_k^\gamma(s-r)}{\rm d}\beta^k_r\right)^4
        \end{split}
    \end{equation*}
    Using \eqref{fourth moment z}, we have 
    \begin{equation*}
        \begin{split}
               &\sum_{|k|=1}^\infty \lambda_k^{4(\delta-1)} |k|^{4\sigma} \mathbb{E}\left(\int_0^t e^{-\lambda_k^\gamma(t-r)}{\rm d}\beta^k_r- \int_0^s e^{-\lambda_k^\gamma(s-r)}{\rm d}\beta^k_r\right)^4\\
    &\lesssim \sum_{|k|=1}^\infty \lambda_k^{4(\delta-1)} |k|^{4\sigma} \left(|t-s|^2+\lambda_k^{\gamma (1+\upsilon) }(t-s)^{1+\upsilon} \right).
        \end{split}
    \end{equation*}
The above sum converges if
\begin{equation}
    4(\delta-1)+4\sigma+(1+\upsilon)\gamma<-2
    \label{condition 3}
\end{equation}
in which case the above sum is bounded by 
\[
\left|\sum_{|k|=1}^\infty \lambda_k^{4(\delta-1)} |k|^{4\sigma} \mathbb{E}\left(\int_0^t e^{-\lambda_k^\gamma(t-r)}{\rm d}\beta^k_r- \int_0^s e^{-\lambda_k^\gamma(s-r)}{\rm d}\beta^k_r\right)^4\right|\lesssim |t-s|^{1+\upsilon}.
\]
Concerning the second sum, note that we can bound similarly using again \eqref{fourth moment z}
\begin{equation*}
\begin{split}
     &\sum_{|k|=1}^\infty \lambda_k^{2(\delta-1)}|k|^{2\sigma}\mathbb{E}\left(\int_0^t e^{-\lambda_k^\gamma(t-r)}{\rm d}\beta^k_r- \int_0^s e^{-\lambda_k^\gamma(s-r)}{\rm d}\beta^k_r\right)^2\\
    &\lesssim\sum_{|k|=1}^\infty \lambda_k^{2(\delta-1)}|k|^{2\sigma}\left(\mathbb{E}\left(\int_0^t e^{-\lambda_k^\gamma(t-r)}{\rm d}\beta^k_r- \int_0^s e^{-\lambda_k^\gamma(s-r)}{\rm d}\beta^k_r\right)^4\right)^{1/2}\\
    &\lesssim\sum_{|k|=1}^\infty \lambda_k^{2(\delta-1)}|k|^{2\sigma}\left(|t-s|^2+\lambda_k^{\gamma (1+\upsilon) }(t-s)^{1+\upsilon} \right)^{1/2}.
\end{split}
\end{equation*}
Under condition \eqref{condition 3}, the above sum converges in which case we obtain
\begin{equation*}
    \begin{split}
        \left(\sum_{|k|=1}^\infty \lambda_k^{2(\delta-1)}|k|^{2\sigma}\mathbb{E}\left(\int_0^t e^{-\lambda_k^\gamma(t-r)}{\rm d}\beta^k_r- \int_0^s e^{-\lambda_k^\gamma(s-r)}{\rm d}\beta^k_r\right)^2\right)^2\lesssim |t-s|^{1+\upsilon}.
    \end{split}
\end{equation*}
Overall, this shows
\[
\mathbb{E}\norm{z_t-z_s}_{H^\sigma}^4\lesssim |t-s|^{1+\upsilon}
\]
and thus, we may conclude by Kolmogorov's continuity theorem. 
\end{proof}

\section{Consequences}
\label{consequences section}
\noindent
In this section, we collect a selection of consequences of Section \ref{convex int section}. By nature of the presented estimates, the following results directly carry over from the analysis in \cite{hofmanovaconvex4} and \cite{hofmanovaconvex6}, hence we will content ourselves with merely giving the statements and for the detailed proofs refer the reader to the corresponding arguments in those papers. We would like to stress here that we do not require Gaussianity of the noise, hence the results of these sections hold for any noise $W$ such that the associated stochastic convolution $z$ is a well defined object satisfying \eqref{normz}. However, in order to define stationary (ergodic) solutions, Section \ref{sec:statSol} may only deal with noises allowing for an extension of the corresponding solution to the full time horizon.

\subsection{Non-uniqueness, infinitely many solutions and continuum of solutions}\label{sec:nonunique}
\begin{theorem}
    The solution constructed in Theorem \ref{convergenceTheorem} is not unique; in fact, there exist infinitely many such solutions and the set of solutions forms a continuum.
\end{theorem}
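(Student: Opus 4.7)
The plan is to exploit the free parameter $C_0 \geq 2$ entering the amplitude formula~\eqref{def:Block02}, which provides a one-parameter family of convex integration schemes. For each admissible $C_0$, Theorem~\ref{convergenceTheorem} produces a weak solution $g^{C_0}$ (with a possibly $C_0$-dependent $C_{\mathrm{start}}$), and the claim will follow once I show that the map $C_0 \mapsto g^{C_0}$ generates pairwise distinct solutions and that the resulting set of solutions is uncountable.

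The first step is to establish a quantitative \emph{lower} bound on the first building block in terms of $C_0$. Since $g_1(t,x) = \sum_{j=1}^2 a_{j,1}(t,x)\cos(5\lambda_1 l_j\cdot x)$ is a high-frequency oscillation, a Plancherel-type computation together with $a_{j,1}^2 \gtrsim \chi_{\ell_1}\lambda_1^{-1}(C_0 + \mathcal{R}_j^o q_{\ell_1}/\chi_{\ell_1})$ and $\mathcal{R}_j^o q_{\ell_1}/\chi_{\ell_1} \in [-1,1]$ should yield a bound of the form
\begin{align*}
\mathbb{E}\norm{g_1^{C_0}}_{L^2_x}^2 \gtrsim \frac{C_0-1}{\lambda_1}\, \mathbb{E}[\chi_{\ell_1}],
\end{align*}
so that $\norm{g_1^{C_0}}_{L^2_\omega L^2_x}$ genuinely grows like $\sqrt{C_0/\lambda_1}$. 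In parallel, the tail $\sum_{n\geq 2} g_n^{C_0}$ is controlled by~\eqref{eq:EstimateBuildingBlock} combined with the super-exponential decay $r_n \sim \lambda_n^{-\beta}$ from~\eqref{def:Parameter} and the frequency-localisation of $g_n$, giving
\begin{align*}
\Bignorm{\sum_{n\geq 2} g_n^{C_0}}_{L^{2m}_\omega C_{\mathrm{loc}} L^2_x} \lesssim \sqrt{C_0+1}\,\lambda_2^{-(1+\beta)/2}\sum_{n\geq 0} a^{-b^{n+1}\beta/2},
\end{align*}
which, for any fixed compact range of $C_0$, can be made arbitrarily small compared to $\sqrt{C_0/\lambda_1}$ by the choice of $a$ in~\eqref{def:Parameter}. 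This separates $g^{C_0}$ from $g^{C_0'}$ whenever $\abs{C_0 - C_0'}$ is sufficiently large.

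To upgrade this to an uncountable family, I would establish continuity of the map $C_0 \mapsto g^{C_0}$ in $L^{2m}_\omega C_{\mathrm{loc}} B^{1/2}_{\infty,1}$. Each $a_{j,n}$ depends smoothly on $C_0$ (since $\mathcal{R}_j^o q_{\ell_n}/\chi_{\ell_n}\in [-1,1]$ keeps the square root away from its singularity), so the iterates $g_{\leq n}^{C_0}$ vary continuously in $C_0$; combining this with the uniform-in-$n$ Cauchy estimate~\eqref{eq:EstimateG} yields continuity of the limit. Together with the strict monotone-in-$C_0$ lower bound from the first step, this produces uncountably many distinct solutions, and in fact a continuum.

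The main obstacle is the lower bound in the first step: because later corrections also carry a factor $\sqrt{C_0+1}$ (and the residuals $q_n$ themselves depend on $C_0$), one must carefully verify that the $C_0$-dependence of the tail is strictly subleading compared to the $C_0$-dependence of $g_1$. This requires revisiting the constants in Proposition~\ref{prop:ResidualPropagation} to track their precise $C_0$-behaviour and then selecting the parameters $(a,b,\beta)$ so that the scale separation $\lambda_2/\lambda_1 = a^{b(b-1)}$ dominates any power of $C_0$ appearing in the propagation estimates. Once this bookkeeping is done, the argument follows the scheme of~\cite[Section~5]{hofmanovaconvex4} and~\cite[Section~6]{hofmanovaconvex6} essentially verbatim.
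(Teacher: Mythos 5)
Your approach is the same one the paper has in mind: the paper does not spell out a proof but defers to Sections~3.2.2--3.2.4 of \cite{hofmanovaconvex4}, and earlier explicitly flags the free parameter $C_0 \geq 2$ in the amplitude formula as the mechanism for deriving non-uniqueness. Your outline---a $C_0$-dependent lower bound on the energy of the first perturbation $g_1$, domination of the tail's $C_0$-dependence via the scale separation in the parameter choice, and continuity of $C_0 \mapsto g^{C_0}$ to pass from pairwise distinctness to a continuum---is precisely how those cited arguments run.
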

These statements follow analogously to the analysis in Sections 3.2.2, 3.2.3, and 3.2.4, respectively, in \cite{hofmanovaconvex4}.

\subsection{Initial value problem}\label{sec:intincond}

Let $\theta_0$ be an $\mathcal{F}_0$-measurable initial condition independent of the noise with zero mean and such that for all $m \geq 1$ and some $\eta > \frac{1}{2}$ it holds
\begin{equation}\label{momentInitCond}
\mathbb{E}\left[ \|\theta_0\|_{C^{\eta}}^{2m}\right] < \infty.
\end{equation}
Similar to \cite{hofmanovaconvex6} we may incorporate this initial condition into the stochastic convolution giving
\[ z_t =  e^{-t\Lambda^{\gamma}} \Lambda^{-1} \theta_0 + \Lambda^{-1}\int_0^t e^{-(t-s) \nu \Lambda^{\gamma}}\Lambda^{\delta-1} {\rm d}W_s\]
where the first summand is in $C([0,\infty); C^{\eta+1}) \cap C_t^{1/2}C_x^{\eta+1-\gamma/2}$, cf. \cite[Lemma 2.8]{zhang2022}. In the convex integration construction of Section \ref{convex int section}, adjust by introducing the cut-off
\begin{equation*}
	\chi(t) =
\begin{cases}
0,& t \leq 2^{-n-1}\\
\in (0,1),& t \in \left(2^{-n-1},2^{-n}\right)\\
1,& t \geq 2^{-n}
\end{cases}
\end{equation*}
and redefining the perturbations as
\[ g_{n+1} := \chi g_{n+1}^{\rm old}.\]
This way, the scheme does not manipulate the initial condition and though we merely obtain a bounded error
\[ \left(\mathbb{E}\left[\sup_{t \in [0, 2^{-n}]} \|q_{n+1}(t)\|_X^r\right]\right)^{\frac{1}{r}} \lesssim C_{\rm start}(C_z^2 +1)\]
in that region, this is compensated by the decreasing size of the time interval. In contrast to Section \ref{convex int section}, the constant $C_{\rm start}$ will now depend on the initial condition; more precisely it depends on the moment bound \ref{momentInitCond}. Therefore in this setting, one first needs to fix $m$ to proceed with the analysis; nevertheless, the result holds for all $m \geq 1$. Thus in analogy to \cite{hofmanovaconvex6} we obtain
\begin{theorem}\label{thm:initCond}
For the above choice of initial condition $\theta_0$ there exist infinitely many $(\mathcal{F}_t)$-adapted non-Gaussian analytically weak solutions $\theta$ to \eqref{sqg intro} in the sense of Lemma \ref{lem:trafo} with initial value $\theta_0$, belonging to
\[L^p_{\rm loc}([0,\infty); B_{\infty, 1}^{-1/2}) \cap C([0,\infty), B_{\infty,1}^{-1/2 - }) \cap C^{1/2 -}([0,\infty), B_{\infty,1}^{-1 -})\]
$\mathbb{P}$-a.s. for all $p \in [1,\infty)$. Furthermore there exists $\sigma > \frac{1}{2}$ such that for all $m \geq 1$
\begin{align*}
    &\sup_{t \geq 4} \left(\mathbb{E}\left[\sup_{s \in [t,t+1]} \|\theta\|_{B_{\infty,1}^{\sigma -1}}^{2m} \right]\right)^{\frac{1}{2m}} + \|\theta\|_{L_{\omega}^{2m}C_{\rm loc}^{1/2-}B_{\infty,1}^{-1-}} \\
    &\hspace{5em}\lesssim \sqrt{C_{\mathrm{start}}}(\norm{z}_{L^{2m}_\omega C_{\mathrm{loc}} C^{1+\kappa}_x} + \seminorm{z }_{L^{2m}_\omega C^\alpha_{\mathrm{loc}} C^{1+}_x} + 1).
\end{align*}
In particular, for any $\epsilon >0$ there exists such $\theta$ such that
\begin{equation}\label{eq:epsInitCond}
    \|\theta\|_{L_{\omega}^{2m}C_{\rm loc}B_{\infty,1}^{-1/2-}} \leq \sqrt{C_{\mathrm{start}}}(\norm{z}_{L^{2m}_\omega C_{\mathrm{loc}} C^{1+\kappa}_x} + \seminorm{z }_{L^{2m}_\omega C^\alpha_{\mathrm{loc}} C^{1+}_x} + 1) \left(\|\theta_0\|_{L^{2m}_{\omega}B_{\infty,1}^{-1/2-}} + \epsilon\right).
\end{equation}
\end{theorem}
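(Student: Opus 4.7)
The plan is to re-run the convex integration scheme of Section~\ref{convex int section} with two modifications: (i) absorb $\theta_0$ into an enlarged object $\tilde{z}$ playing the role of the stochastic convolution, and (ii) localise the perturbations away from $t=0$ via the cutoff $\chi$ so that every iterate preserves the initial datum. Writing
\[
\tilde{z}_t := e^{-t\Lambda^\gamma}\Lambda^{-1}\theta_0 + \Lambda^{-1}\int_0^t e^{-(t-s)\nu\Lambda^\gamma}\Lambda^{\delta-1}\,\mathrm{d}W_s,
\]
the transformed unknown $g = \Lambda^{-1}\theta - \tilde z$ satisfies $\Lambda\tilde z(0) = \theta_0$, so requiring $g_{\leq n}(0) = 0$ for every $n$ is equivalent to enforcing $\theta(0)=\theta_0$. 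The cutoff $\chi$ realises this since $\chi\equiv 0$ on $[0,2^{-n-1}]$ and each new increment is $\chi\, g_{n+1}^{\rm old}$.

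The first step is to verify that $\tilde z$ meets the hypothesis~\eqref{regularity z}. The stochastic summand is handled by Section~\ref{examples section}. For the deterministic-in-$W$ summand, Schauder estimates (Lemma~\ref{schauder}) together with $\eta>1/2$ and $\gamma<3/2$ give $e^{-t\Lambda^\gamma}\Lambda^{-1}\theta_0\in C_{\rm loc}C_x^{1+\eta}\cap C^{1/2}_{\rm loc}C_x^{1+\eta-\gamma/2}$, both of which embed into the norms defining $C_z$. Independence of $\theta_0$ from $W$ together with~\eqref{momentInitCond} then yields a finite $\tilde C_z$ depending linearly on $\|\theta_0\|_{L^{2m}_\omega C^\eta}$ and on the corresponding norms of the stochastic convolution, and thus the hypotheses of Theorem~\ref{convergenceTheorem} are met for $\tilde z$.

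Next, I repeat the iteration of Section~\ref{convex int section} with $z_n := P_{\leq\lambda_n}\tilde z$ and perturbations $g_{n+1} := \chi\, g_{n+1}^{\rm old}$. On $[2^{-n},\infty)$ the cutoff is inactive, Proposition~\ref{prop:ResidualPropagation} applies unchanged and the residual bound~\eqref{ass:Induction} propagates. On $[0,2^{-n}]$ the cutoff produces an extra commutator term proportional to $\partial_t\chi\,\mathcal{R}g_{n+1}^{\rm old}$ and the residual is only uniformly bounded by $C_{\rm start}(\tilde C_z^2+1)$; since this bad window shrinks to $\{0\}$ as $n\to\infty$, the limiting $g$ is still well-defined on $(0,\infty)$ and satisfies the linear-in-$\tilde C_z$ bound of Theorem~\ref{convergenceTheorem} for $t$ bounded away from $0$, while $g(0)=0$ by construction. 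Pulling back via Lemma~\ref{lem:trafo} gives $\theta=\Lambda(g+\tilde z)$ with $\theta(0)=\theta_0$, and the regularity claim in $L^p_{\rm loc}B^{-1/2}_{\infty,1}\cap CB^{-1/2-}_{\infty,1}\cap C^{1/2-}B^{-1-}_{\infty,1}$ follows from Corollary~\ref{cor:Reg} applied to $g$ together with the improved regularity of the semigroup summand of $\tilde z$.

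Non-uniqueness and a continuum of solutions are obtained by varying the amplitude parameter $C_0\geq 2$ in~\eqref{def:Block02}, exactly as in Section~\ref{sec:nonunique}. The quantitative bound~\eqref{eq:epsInitCond} is derived by tracking the dependence of $C_{\rm start}$ on $\theta_0$: the initial residual is estimated by $\|\Lambda\tilde z(0)\|_{C^{1+}_x}^2\lesssim \|\theta_0\|_{C^\eta}^2$, hence one can arrange the free parameter so that the proportionality in~\eqref{eq:NormEstimateLimit} carries precisely the factor $(\|\theta_0\|_{L^{2m}_\omega B^{-1/2-}_{\infty,1}}+\epsilon)$. The main obstacle I expect is the bookkeeping on $[0,2^{-n}]$: the cutoff breaks time-translation invariance and produces extra $\partial_t\chi$ terms that are not absorbed by the residual decay of Proposition~\ref{prop:ResidualPropagation}; closing the scheme requires the super-exponential parameter choice of Lemma~\ref{lem:ControlChoice} to dominate the exponential rate $2^n$ at which $\partial_t\chi$ blows up, and requires verifying that the weak formulation~\eqref{eq:SolutionConcept} still holds in the limit after removing a measure-zero set at $t=0$.
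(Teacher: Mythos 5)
Your modified-scheme setup (absorbing $\theta_0$ into $\tilde z$, cutoff $\chi$ near $t=0$, redefining $g_{n+1}=\chi\,g_{n+1}^{\rm old}$) matches exactly what the paper itself lays out in the preamble of Section~\ref{sec:intincond}, so as far as the construction goes you are on track. However, your proposal leaves two real gaps that the paper's proof addresses explicitly.

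First, \emph{non-Gaussianity} is never established. You only argue non-uniqueness by varying $C_0$, citing Section~\ref{sec:nonunique}, but the theorem claims \emph{non-Gaussian} solutions. When $\theta_0$ and $W$ are both Gaussian, having infinitely many distinct solutions does not rule out all of them being Gaussian. The paper handles this by distinguishing cases: if $\theta_0$ is already non-Gaussian, choose $a$ large enough that $P_{\leq\lambda_0}\theta_0$ remains non-Gaussian; if $\theta_0$ is Gaussian, seed the iteration with $g_{\leq 0}=\hat\chi Z$ for a smooth, bounded, non-Gaussian, $\mathcal F_0$-measurable random variable $Z$ with finitely many Fourier modes, independent of $\theta_0$ and $W$, with $\hat\chi$ a temporal cutoff vanishing near $t=0$, and take $\lambda_0$ large enough that $P_{\leq\lambda_0}Z=Z$. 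Without some such device your solutions need not be non-Gaussian, and this is a missing ingredient rather than mere bookkeeping.

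Second, the improved regularity statement (existence of $\sigma>\tfrac12$ with the $B^{\sigma-1}_{\infty,1}$ bound for $t\ge4$) is not derived: you simply invoke Corollary~\ref{cor:Reg}, but that corollary was proved for the unmodified scheme. Since the cutoff changes the increments, one has to re-check that the $C^\sigma$ estimate on $g_{\leq n+1}-g_{\leq n}$ survives. The paper does this by proving, analogously to \cite[(6.2)]{hofmanovaconvex6},
\[
\|g_{\ell_{n+1}}-g_{\leq n}\|_{L^{2m}_\omega C_{\rm loc}C^\sigma_x}\lesssim\ell_{n+1}^{\alpha}\lambda_n^{\sigma-1/2}\|g_{\leq n}\|_{L^{2m}_\omega C^\alpha_{\rm loc}B^{1/2}_{\infty,1}}\leq\tfrac14 r_{n+1}^{1/2}
\]
for $\sigma<\tfrac b2(\alpha-\beta)$, combining it with the building-block estimate to conclude, for $t\ge 2^{-n+2}$ where the cutoff is inactive,
\[
\Bigl(\sup_{t\geq 2^{-n+2}}\mathbb{E}\Bigl[\sup_{s\in[t,t+1]}\|g_{\leq n+1}(s)-g_{\leq n}(s)\|^{2m}_{C^\sigma}\Bigr]\Bigr)^{1/(2m)}\leq\sqrt{C_{\rm start}(C_z^2+1)}\,\lambda_0^{\beta/2}\lambda_{n+1}^{-\beta/(4b)}+r_{n+1}^{1/2},
\]
subject also to $\sigma<\tfrac12+\tfrac{\beta}{4b}$. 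This is exactly the step that yields the $\sigma>\tfrac12$ gain for $t\ge4$ (and hence the $B^{\sigma-1}_{\infty,1}$ statement after applying $\Lambda$), and it requires spelling out rather than a blanket appeal to the unmodified Corollary~\ref{cor:Reg}.

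A smaller point: you correctly flag that $C_{\rm start}$ now depends on $\theta_0$, but the consequence the paper draws — that one must fix $m$ first and then observe the conclusion holds for every $m\geq1$ — deserves mention, as it affects how one states and reads the uniform-in-$m$ bounds.
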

\begin{proof}
	First observe that analogous to \cite[(6.2)]{hofmanovaconvex6} it holds 
	\begin{align*}
		\|g_{\ell_{n+1}} - g_{\leq n}\|_{L_{\omega}^{2m}C_{\rm loc}C^{\sigma}_x} &\lesssim \ell_{n+1}^{\alpha} \lambda_n^{\sigma -\frac{1}{2}}\|g_{\leq n}\|_{L_{\omega}^{2m}C_{\rm loc}^{\alpha}B_{\infty, 1}^{1/2}}  \\
  &\lesssim \sqrt{C_{\rm start}(1+ C_z^2)} \lambda_{n+1}^{-\alpha} \lambda_n^{\sigma} \leq \frac{1}{4} r_{n+1}^{\frac{1}{2}}
	\end{align*}
	for $\sigma < \frac{b}{2}(\alpha-\beta)$ and hence
	\begin{align*}
		&\sup_{t \geq 2^{-n+2}} \left(\mathbb{E}\left[\sup_{s \in [t,t+1]} \|g_{\leq n+1}(s) - g_{\leq n}(s)\|^{2m}_{C^{\sigma}}\right]\right)^{\frac{1}{2m}}\\
		&\leq \|g_{\ell_{n+1}} - g_{\leq n}\|_{L_{\omega}^{2m}C_{\rm loc}C^{\sigma}_x} + \sup_{t \geq 2^{-n+2}}\left(\mathbb{E}\left[\sup_{s \in [t,t+1]} \|g_{n+1}\|_{C^{\sigma}}^{2m}\right]\right)^{\frac{1}{2m}}\\
		&\lesssim r_{n+1}^{\frac{1}{2}} + \lambda_{n+1}^{\sigma - \frac{1}{2}}\sqrt{(C_0 +1) \left(\|q_n\|_{L_{\omega}^mC_{\rm loc}X} + r_n\right)}\\
		&\lesssim r_{n+1}^{\frac{1}{2}}+ \lambda_{n+1}^{\sigma - \frac{1}{2}} r_n^{\frac{1}{2}}
	\end{align*}
	where 
	\[ \lambda_{n+1}^{\sigma - \frac{1}{2}} r_n^{\frac{1}{2}} = \sqrt{C_{\rm start}(C_z^2+1)} \lambda_0^{\frac{\beta}{2}} \lambda_{n+1}^{\sigma - \frac{1}{2} - \frac{\beta}{2b}} \overset{!}{\leq} \sqrt{C_{\rm start}(C_z^2+1)} \lambda_0^{\frac{\beta}{2}} \lambda_{n+1}^{\frac{\beta}{4b}}\]
	for $\sigma < \frac{1}{2} + \frac{\beta}{4b}$. In total this gives
		\[ \left(\sup_{t \geq 2^{-n+2}} \mathbb{E}\left[\sup_{s \in [t,t+1]} \|g_{\leq n+1}(s) - g_{\leq n}(s)\|_{C^{\sigma}}^{2m}\right]\right)^{\frac{1}{2m}} \leq \sqrt{C_{\rm start}(C_z^2+1)}\lambda_0^{\frac{\beta}{2}} \lambda_{n+1}^{-\frac{\beta}{4b}} + r_{n+1}^{\frac{1}{2}}.\]
		Conclude as in \cite[Theorem 7.1]{hofmanovaconvex6}, and \cite[Theorem 3.2]{hofmanovaconvex4}. In particular, non-Gaussianity follows analogously: for non-Gaussian $\theta_0$ the parameter $a$ can be chosen large enough so as to ensure that $P_{\leq \lambda_0}\theta_0$ and $P_{\lambda_0} \hat{\varphi}$ are non-Gaussian and conclude via the construction. For Gaussian $\theta_0$, start the construction from $g_{\leq 0} = \hat{\chi}Z$ where $Z$ is a smooth bounded non-Gaussian $\mathcal{F}_0$-measurable random variable with finitely many non-zero Fourier modes, independent of $\theta_0$ and the noise, and $\hat{\chi}$ a smooth cut-off in time equal to 0 if $t \leq \frac{1}{2}$ and 1 if $t >1$. Choosing $\lambda_0$ large enough such that $P_{\leq \lambda_0}Z =Z$, we hence ensure that the first Fourier modes of the constructed solution correspond to that of a non-Gaussian and is thus non-Gaussian itself.
\end{proof}

\subsection{Prescribing terminal values}\label{sec:terminalValue}
As it turns out, a similar procedure as in Section \ref{sec:intincond} can be used to prescribe terminal values at arbitrary times $T \geq 4$:
\begin{theorem}\label{thm:presTerminal}
    Let $T \geq 4$ be arbitrary. For all $m\geq 1$, let $v \in L_{\omega}^mC^1([0,T],C^{\eta})$, $\eta > \frac{1}{2}$ be given such that $v$ zero mean $\mathbb{P}$-almost surely, $v(t) = v(T)$ for all $t \in [T-1,T]$ and $\partial_t v(0) =0$. Then there exists an analytically weak solution $\theta$ to \eqref{sqg intro} in the sense of Lemaa \ref{lem:trafo} such that $\theta(0)=v(0)$ and $\theta(T) =v(T)$ and
    \[\theta \in L^p(0,T; B_{\infty, 1}^{-1/2}) \cap C([0,T], B_{\infty,1}^{-1/2 - }) \cap C^{1/2 -}([0,T], B_{\infty,1}^{-1 -})\]
$\mathbb{P}$-a.s. for all $p \in [1,\infty)$. Furthermore for any $\epsilon >0$ there exists such $\theta$ such that for all $m\geq 1$
\[\|\theta\|_{L_{\omega}^{2m}C_TB_{\infty,1}^{-1/2-}} \leq \|v\|_{L_{\omega}^{2m}C_TB_{\infty,1}^{-1/2-}} + \epsilon.\]
\end{theorem}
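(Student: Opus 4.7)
The plan is to adapt the convex integration construction of Section~\ref{sec:intincond} by replacing the one-sided cut-off with a symmetric two-sided one, so that both endpoints $t=0$ and $t=T$ are shielded from the iterative perturbations. In this way the prescribed boundary values are inherited from an appropriately chosen zeroth iterate that already matches $v(0)$ and $v(T)$, while the residual is driven to zero on the interior of $[0,T]$.

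First I would incorporate $v$ into the scheme by seeding the iteration with a zeroth iterate of the form $g_{\leq 0} := \Lambda^{-1} v$. The structural assumptions on $v$---namely $v \in L^m_\omega C^1([0,T]; C^\eta)$ with $\eta > 1/2$, together with $\partial_t v(0) = 0$ and $v(t) \equiv v(T)$ on $[T-1, T]$---guarantee both that the induced initial residual $q_0$, built out of $-\partial_t \mathcal{R} \Lambda^{-1} v$, the dissipation term applied to $\Lambda^{-1} v$, and the nonlinearity applied to $\Lambda^{-1} v + z_0$, is bounded in $L^m_\omega C_{[0,T]} X$, and that no singularities appear at the endpoints. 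By choosing $C_{\mathrm{start}}$ sufficiently large depending on $\|v\|_{L^m_\omega C^1_T C^\eta_x}$ and $C_z$, one can arrange $\|q_0\|_{L^m_\omega C_{[0,T]} X} \leq r_0$, enabling Proposition~\ref{prop:ResidualPropagation} to be initiated.

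Next I would introduce, in place of the one-sided $\chi$ of Section~\ref{sec:intincond}, a two-sided cut-off
\begin{equation*}
\chi_n(t) = \begin{cases} 0, & t \in [0, 2^{-n-1}] \cup [T - 2^{-n-1}, T], \\ 1, & t \in [2^{-n}, T - 2^{-n}], \end{cases}
\end{equation*}
smoothly interpolated in between, and redefine the perturbations as $g_{n+1} := \chi_{n+1} g_{n+1}^{\mathrm{old}}$. This guarantees $g_{\leq n}(0) = \Lambda^{-1} v(0)$ and $g_{\leq n}(T) = \Lambda^{-1} v(T)$ at every stage, hence also in the limit $g$. Undoing the transformation $\theta = \Lambda(g + z)$ via Lemma~\ref{lem:trafo}---and noting that $z(0)=0$, while any nontrivial endpoint contribution from $z(T)$ can be absorbed into the seed as in Section~\ref{sec:intincond}---delivers $\theta(0) = v(0)$ and $\theta(T) = v(T)$.

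The main obstacle is to close Proposition~\ref{prop:ResidualPropagation} despite the fact that on the two boundary strips the residual is only bounded, not decreasing to zero. As in \cite{hofmanovaconvex6}, one only retains
\begin{equation*}
\left( \mathbb{E} \left[\sup_{t \in [0, 2^{-n}] \cup [T-2^{-n}, T]} \|q_{n+1}(t)\|_X^m \right] \right)^{1/m} \lesssim C_{\mathrm{start}}(C_z^2 + 1),
\end{equation*}
but since these strips shrink geometrically in $n$, the $L^p_{t}$-norm of the residual over $[0,T]$ still vanishes, which is enough for passing to the limit and identifying the weak formulation. Weak compactness then delivers a limit $g$ satisfying \eqref{intro:Main} on $(0, T)$, with boundary values inherited by construction. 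For the final $\epsilon$-bound, I would tune the parameters $(a, b, \beta)$ and exploit the gap between the $B^{1/2}_{\infty, 1}$-scale at which the corrections $g_n$ are controlled and the weaker $B^{-1/2-}_{\infty,1}$-scale appearing in the claimed bound: after applying $\Lambda$, the discrepancy $\theta - v$ can be made arbitrarily small in $L^{2m}_\omega C_T B^{-1/2-}_{\infty,1}$ by choosing $a$ large enough, yielding the stated norm inequality exactly as in the proof of Theorem~\ref{thm:initCond}.
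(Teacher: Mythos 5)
Your overall architecture — seeding the iteration with $v$, using a two-sided cut-off $\chi_n$ that vanishes on the shrinking strips $[0,2^{-n-1}]\cup[T-2^{-n-1},T]$, tolerating a non-decaying residual there, and recovering $L^p_t$-convergence from the geometric shrinkage — matches the strategy in the paper. However, the proposal has a genuine gap in the treatment of the seed, and this is not a cosmetic detail but the key new ingredient of the proof.

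You take $g_{\leq 0} := \Lambda^{-1}v$ with the full Fourier content of $v$. This breaks the convex integration scheme: the residual propagation (Proposition~\ref{prop:ResidualPropagation}) and the error estimates in its proof (for $\mathrm{R_{com}}$, the transport error, etc.) all rest on $g_{\leq n}$ and hence $q_n$ being frequency localized up to scale $\sim\lambda_n$, which is what permits the use of Lemma~\ref{lem:LinfQuantified} with logarithmic loss only. A seed with unbounded Fourier support destroys this structure at every level. Conversely, if you seed with a frequency truncation $P_{\lambda_0/3}\Lambda^{-1}v$ to preserve localization (as the paper does), then the cut-off $\chi_n$ alone only ever freezes the low-frequency truncation at the endpoints, so in the limit you would obtain $g(0)=P_{\lambda_0/3}\Lambda^{-1}v(0)\neq\Lambda^{-1}v(0)$, failing the boundary requirements. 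The paper resolves exactly this tension by adding at every step a supplementary correction
\[
g_{n+1}^{\mathrm{pres}} := (P_{\lambda_{n+1}/3}-P_{\lambda_n/3})\Lambda^{-1}v,
\]
so that $g_{\leq n+1} = g_{\ell_{n+1}} + \chi\, g_{n+1} + g_{n+1}^{\mathrm{pres}}$. This progressively injects the missing high-frequency content of $v$ while keeping every iterate frequency localized and, crucially, contributing only $\lesssim \lambda_n^{-1}\norm{v}_{L^{2m}_\omega C^1_T C^\eta}\lesssim r_{n+1}^{1/2}$ to the $B^{1/2}_{\infty,1}$-norm, so it does not disturb the convergence of the scheme. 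Your proposal omits this term entirely, and without it (or something functionally equivalent) the iteration cannot simultaneously be frequency localized and converge to the prescribed endpoint values. The remaining ingredients (the two-sided cut-off, the boundedness of the initial residual via $\norm{v}_{L^{2m}_\omega C^1_T C^\eta}$, $\partial_t v(0)=0$ and $v$ constant near $T$ to avoid endpoint singularities, the $\epsilon$-bound via tuning $a$) are correctly identified.
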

\begin{proof}
    Analogously to \cite[Section 3.3]{hofmanovaconvex4} we may in this case set up a convex integration procedure of the form: let $z(0) =0$ and let
\begin{align*}
    &-\partial_t \mathcal{R}g_{\leq n} + \nabla^{\perp}(g_{\leq n} + z_n)\Lambda(g_{\leq n}+z_n) - \nu \Lambda^{\gamma-1}\nabla g_{\leq n} \mathring{\approx} \nabla q_n,\\
    &g_{\leq n}(0) = P_{\lambda_n/3}\Lambda^{-1}v(0),\\
    &g_{\leq n}(t) = P_{\lambda_n/3}\Lambda^{-1}v(T), \quad\forall t \in [T-2^{-n},T].
\end{align*}
Indeed, starting of with
\begin{align*}
    &g_{\leq 0} = P_{\lambda_0/3}\Lambda^{-1}v,\\
    &\nabla q_0 \mathring{\approx} -\partial_t \mathcal{R}g_{\leq 0} + \nabla^{\perp}(g_{\leq 0} + z_0)\Lambda(g_{\leq 0}+z_0) - \nu \Lambda^{\gamma-1}\nabla g_{\leq 0}
\end{align*}
gives the estimates
\begin{align*}
    \|g_{\leq 0} \|_{L^{2m}_{\omega}C_T^1B_{\infty,1}^{1/2}} &\leq \|v\|_{L^{2m}_{\omega}C_T^1C^{\eta}}\\
    \|q_0\|_{L_{\omega}^mC_tX}&\lesssim \|v\|_{L^{2m}_{\omega}C_TC^{\eta}}^2+\|v\|_{L^{m}_{\omega}C_T^1C^{\eta}}+C_z.
\end{align*}
Thus choose $C_{\rm start} = C\|v\|_{L^{2m}_{\omega}C_T^1C^{\eta}}$. In the induction step we define $g_{n+1}$ as before and let
\[g_{\leq n+1} := g_{\ell_{n+1}} + \chi g_{n+1} + g_{n+1}^{\rm pres}\]
where
\[\chi(t)=
\begin{cases}
    0, &t \in [0,2^{-n-1}]\cup[T-2^{-n-1},T],\\
    \in (0,1), &t \in (2^{-n-1},2^n) \cup(T-2^{-n}, T-2^{-n-1}),\\
    1, &t \in [2^{-n}, T-2^{-n}]
\end{cases}
\]
and
\[g_{n+1}^{\rm pres} := (P_{\lambda_{n+1}/3}-P_{\lambda_n/3})\Lambda^{-1}v.\]
Observe that
\[\|g_{n+1}^{\rm pres}\|_{L^{2m}C_TB_{\infty,1}^{1/2}} \lesssim \lambda_n^{-1} \|v\|_{L^{2m}_{\omega}C_T^1C^{\eta}} \leq \frac{1}{2}r_{n+1}^{\frac{1}{2}}.\]
Furthermore
\begin{align*}
    g_{\leq n+1}(0) &= P_{\lambda_{n+1}/3}\Lambda^{-1}v(0),\\
    g_{\leq n+1}(t) &= P_{\lambda_{n+1}/3}\Lambda^{-1}v(T), \quad\forall t \in [T-2^{-n-1},T]
\end{align*}
and a similar analysis as in \cite[Section 3.3]{hofmanovaconvex4} applies.
\end{proof}

\subsection{Coming down from infinity}\label{sec:comingInf}
Theorem \ref{thm:initCond} and Theorem \ref{thm:presTerminal} imply the following coming down from infinity result:
\begin{theorem}
    Let $T\geq 4, \epsilon >0$ and $\theta_0$ as in Theorem \ref{thm:initCond}, then there exist infinitely measurable solutions to \eqref{sqg intro} in the sense of Lemma \ref{lem:trafo} such that for all $m \geq 1$
    \begin{equation*}
        \|\theta\|_{L^{2m}_{\omega}C([T,\infty),B_{\infty,1}^{-1/2-})} \leq \epsilon.
    \end{equation*}
\end{theorem}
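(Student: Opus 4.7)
The plan is to split the time interval into $[0,T]$ and $[T,\infty)$ and apply the two preceding theorems on each piece. On $[0,T]$ I would invoke Theorem~\ref{thm:presTerminal} with a carefully chosen deterministic steering profile $v$ so as to drive the solution from the prescribed datum $\theta_0$ down to the zero terminal value at time $T$. On $[T,\infty)$ I would then restart the convex integration machinery of Theorem~\ref{thm:initCond} with initial datum $0$ and noise $\widetilde{W}_s := W_{T+s} - W_T$ (whose associated stochastic convolution inherits the bound~\eqref{normz}), producing a solution whose size is controlled by the free parameter appearing in~\eqref{eq:epsInitCond}.

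More concretely, I would first pick a process $v \in L^m_\omega C^1([0,T]; C^\eta)$ with $v(0) = \theta_0$, zero spatial mean, $\partial_t v(0) = 0$, and $v(t) \equiv 0$ for $t \in [T-1,T]$; such a $v$ can be built by interpolating between $\theta_0$ and $0$ with a smooth temporal cutoff adapted to $[0,T-1]$. Feeding this into Theorem~\ref{thm:presTerminal} yields an analytically weak solution $\theta^{(1)}$ on $[0,T]$ with $\theta^{(1)}(0) = \theta_0$ and $\theta^{(1)}(T) = 0$. Since the terminal value is deterministically zero (hence $\mathcal{F}_T$-measurable and independent of the future noise increments $\widetilde{W}$), I can then apply Theorem~\ref{thm:initCond} to the shifted system on $[0,\infty)$ with initial condition $0$. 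The estimate~\eqref{eq:epsInitCond}, evaluated at $\theta_0 \equiv 0$, reduces to a bound proportional to the auxiliary parameter there; choosing this parameter as a sufficiently small $\varepsilon' = \varepsilon'(\varepsilon)$ produces $\theta^{(2)}$ on $[T,\infty)$ with
\begin{align*}
\| \theta^{(2)} \|_{L^{2m}_\omega C([T,\infty); B^{-1/2-}_{\infty,1})} \leq \varepsilon.
\end{align*}

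The final solution is the concatenation $\theta(t) := \theta^{(1)}(t) \mathbf{1}_{[0,T]}(t) + \theta^{(2)}(t-T)\mathbf{1}_{(T,\infty)}(t)$. Since $\theta^{(1)}$ is stationary equal to $0$ on the whole interval $[T-1,T]$ (built into the construction of $v$ in Theorem~\ref{thm:presTerminal}) and $\theta^{(2)}$ starts at $0$, the concatenation is continuous in time with value $0$ at $T$; moreover $\theta$ is adapted to $(\mathcal{F}_t)_{t\ge 0}$ by construction. Testing the weak formulation against $\xi \in C^\infty_c(\mathbb{R}\times \mathbb{T}^2)$ and splitting the time integral at $T$ shows the weak formulation passes over the junction without producing any boundary term, because both pieces solve the equation weakly on their respective open intervals and agree in a neighborhood of $T$ (both being identically $0$ there, so the nonlinearity vanishes and the linear contributions match trivially). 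Infinitely many distinct such $\theta$ are obtained from the non-uniqueness already furnished by either Theorem~\ref{thm:initCond} or Theorem~\ref{thm:presTerminal} (e.g.\ by perturbing the seed $Z$ or the cutoff $\chi$ on $[T,\infty)$).

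The main obstacle is the gluing step: one must check carefully that the concatenation is an honest analytically weak solution on all of $[0,\infty)$, in particular that no singular contribution appears at $t=T$. The engineered feature that $\theta^{(1)} \equiv 0$ on a full left neighborhood of $T$ (guaranteed by the hypothesis $v(t) = v(T) = 0$ for $t\in[T-1,T]$ in Theorem~\ref{thm:presTerminal}) is precisely what removes this obstruction and aligns the filtration with the restart of the scheme at $T$. A secondary technical point is that the constant $C_{\mathrm{start}}$ in Theorem~\ref{thm:initCond} depends on the moment bound of the initial datum; because we restart from the deterministic value $0$, this dependence is harmless and the free parameter in~\eqref{eq:epsInitCond} can indeed be tuned freely without inflating $C_{\mathrm{start}}$.
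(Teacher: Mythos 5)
Your proposal follows the paper's strategy exactly: steer to zero on $[0,T]$ via Theorem~\ref{thm:presTerminal}, then restart with initial datum $0$ via Theorem~\ref{thm:initCond} and use~\eqref{eq:epsInitCond} to make the tail small. The paper's own proof is precisely this (though stated much more tersely), so your fleshed-out version is a fair reconstruction.

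One minor inaccuracy in your gluing discussion: the construction in Theorem~\ref{thm:presTerminal} only enforces $g_{\leq n}(t) = P_{\lambda_n/3}\Lambda^{-1}v(T)$ on the \emph{shrinking} interval $[T-2^{-n},T]$, so in the limit one is only guaranteed $\theta^{(1)}(T) = 0$, not $\theta^{(1)}\equiv 0$ on all of $[T-1,T]$. This does not break the argument, since the weak formulation is an integral identity that glues perfectly well across $t=T$ once both pieces are weak solutions on their respective intervals, agree continuously at $T$, and the second piece is driven by the increment noise $W_{T+\cdot}-W_T$; matching values at the single point $T$ (both equal to $0$) is all that is needed. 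It is worth flagging this because your stated justification (``the nonlinearity vanishes'' on a whole left neighborhood of $T$) is not the mechanism that actually makes the concatenation work.
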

\begin{proof}
    Choose $v$ as in Theorem \ref{thm:presTerminal} such that $v(0) = \theta_0$ and $v(T)=0$, then from Theorem \ref{thm:presTerminal} we deduce the existence of $\theta$ such that
    \begin{align*}
        \theta(0) &=\theta_0,\\
        \theta(T) &= 0.
    \end{align*}
    Conclude with Theorem \ref{thm:initCond}.
\end{proof}

\subsection{Stationary (ergodic) solutions}\label{sec:statSol}
Stationary solutions satisfy the equation on the full time horizon $\mathbb{R}$ hence in order to study the existence of such solutions in the above setting, we must be able to extend the convex integration results to $\mathbb{R}$ as well. Hence in this subsection we can only treat the case of Section \ref{subsec:whiteNoise} which allows for the same extension procedure as described in \cite[Section 8]{hofmanovaconvex6}. Analogously we use Theorem \ref{thm:initCond} to start in $\theta_0 =0$ and apply the convex integration mechanism of Section \ref{convex int section} to
\[z_t = \int_{-\infty}^t e^{-(t-s)\Lambda^{\gamma}}\Lambda^{\delta-1}{\rm d}W_s\]
to construct infinitely many (ergodic) stationary solutions. Indeed: due to lack of uniqueness, we understand stationarity as invariance with respect to time shifts
\[S_t(\theta,W)(\cdot) := (\theta(\cdot + t), W(\cdot + t) - W(t)), \quad t \in \mathbb{R}, (\theta, W) \in \mathcal{T},\]
on the trajectory space $\mathcal{T} := C\left(\mathbb{R}, B_{p,1}^{-1/2}\right) \times C\left(\mathbb{R}, B_{p,p}^{-1-}\right)$ for some $p \in [2, \infty)$. 

In analogy to \cite[Definition 8.1]{hofmanovaconvex6} we hence define stationary solutions $((\Omega, \mathcal{F}, (\mathcal{F}_t),\mathbb{P}),\theta, W)$ as analytically weak solutions of \eqref{sqg intro} in the sense of Lemma \ref{lem:trafo} on $(-\infty, \infty)$ such that
\[{\rm Law}(S_t(\theta,W)) = {\rm Law}(\theta,W) \quad \text{for all }t\in \mathbb{R}.\]
Similarly we may define a stationary solution $((\Omega, \mathcal{F}, (\mathcal{F}_t),\mathbb{P}),\theta, W)$ as ergodic (cf. \cite[Definition 8.4]{hofmanovaconvex6}) if
\[ {\rm Law}(\theta, W)(A) \in \{0,1\}\quad \text{for all } A \subset \mathcal{T} \text{ Borel and shift invariant}.\]
In analogy to \cite[Theorem 8.2, Corollary 8.3, Theorem 8.5]{hofmanovaconvex6} and \cite[Theorem 4.2]{hofmanovaconvex4}) we may therefore deduce
\begin{theorem}
	\begin{enumerate}
		\item Each convex integration solution $\theta$ obtained in Theorem \ref{thm:initCond} with $\theta_0 =0$ and extended to $\mathbb{R}$ gives rise to a stationary solution $((\tilde{\Omega}, \tilde{\mathcal{F}}, (\tilde{\mathcal{F}}_t),\tilde{\mathbb{P}}),\tilde{\theta}, \tilde{W})$ such that time averages of the law of time-shifts of $\theta$ converge to the law of $(\tilde{\theta}, \tilde{W})$ and for some $\epsilon \in (0,1), \sigma >0$ it holds for all $m \geq 1$ $\tilde{\theta} \in L_{\omega}^{2m}C_tB_{\infty,1}^{-1/2+\epsilon} \cap L_{\omega}^{2m}C_t^{1/2-\sigma}B_{\infty,1}^{-1-\epsilon}$.
		\item There exist infinitely many non-Gaussian stationary solutions.
		\item There exist infinitely many non-Gaussian ergodic stationary solutions such that for some $\epsilon \in (0,1), \sigma >0$ and for all $m \geq 1$ they lie in $\tilde{\theta} \in L_{\omega}^{2m}C_tB_{\infty,1}^{-1/2+\epsilon} \cap L_{\omega}^{2m}C_t^{1/2-\sigma}B_{\infty,1}^{-1-\epsilon}$.
	\end{enumerate}
\end{theorem}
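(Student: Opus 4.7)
The plan is to mimic the Krylov-Bogoliubov strategy developed in~\cite[Section 8]{hofmanovaconvex6} and~\cite[Section 4]{hofmanovaconvex4}, leveraging the moment estimates already established in our framework. First, starting from $\theta_0 = 0$, Theorem~\ref{thm:initCond} provides convex integration solutions $\theta$ on $[0,\infty)$ with moment bounds uniform in the time window, which by the extension procedure of~\cite[Section 8]{hofmanovaconvex6} can be pushed to the full real line (recall that this extension step is what forces us to restrict to the cylindrical Wiener setting of Section~\ref{subsec:whiteNoise}, as it requires backward-in-time compatibility of the stochastic convolution $z_t = \int_{-\infty}^{t} e^{-(t-s)\Lambda^{\gamma}}\Lambda^{\delta-1}\dd W_s$). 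In particular, combining~\eqref{eq:NormEstimateLimit} with Lemma~\ref{regularity of stoch conv, space time white noise } yields finite moments of $\theta$ in $L^{2m}_\omega C_t B^{-1/2+\epsilon}_{\infty,1} \cap L^{2m}_\omega C^{1/2-\sigma}_t B^{-1-\epsilon}_{\infty,1}$ for suitable small $\epsilon,\sigma>0$, uniformly in time.

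For assertion~(1), the plan is to introduce time averages of the laws of shifts, namely $\mu_T := \frac{1}{T}\int_0^T \mathrm{Law}(S_t(\theta,W)) \, \dd t$ on the trajectory space $\mathcal{T}$. The uniform-in-time moment bounds above, combined with the compact embedding of $B^{-1/2+\epsilon}_{\infty,1} \cap C^{1/2-\sigma}_t B^{-1-\epsilon}_{\infty,1}$ into $B^{-1/2}_{p,1} \times B^{-1-}_{p,p}$ on bounded time intervals, yield tightness of $\{\mu_T\}_{T>0}$. Passing to a subsequential weak limit $\tilde{\mu}$ gives a shift-invariant measure, and one constructs on a new probability space $(\tilde{\Omega},\tilde{\mathcal{F}},\tilde{\mathbb{P}})$ a pair $(\tilde{\theta},\tilde{W})$ of law $\tilde{\mu}$ by a Jakubowski-Skorokhod representation. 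A key step is verifying that $\tilde{\theta}$ still satisfies~\eqref{sqg intro} in the analytically weak sense: since the solution concept is linear except for the quadratic non-linearity $\nabla^\perp(g+z)\Lambda(g+z)$, one passes to the limit using the commutator Lemma~\ref{lem:Commutator} together with the uniform $\dot{H}^{1/2}$-moment bound inherited through the weak limit, exactly as in~\cite[Theorem 8.2]{hofmanovaconvex6}. Inheritance of the regularity to $\tilde{\theta}$ is a consequence of lower semi-continuity of the relevant norms under weak convergence.

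For assertion~(2), the plan is to exploit the flexibility of Theorem~\ref{thm:initCond}: the construction there produces infinitely many solutions whose first Fourier modes can be engineered to follow prescribed non-Gaussian laws via the auxiliary variable $Z$ in the proof of Theorem~\ref{thm:initCond}. These distinct non-Gaussian characteristics on low Fourier modes are preserved under shifts and thus persist in the limit $\tilde{\mu}$, producing infinitely many distinct non-Gaussian stationary measures. For assertion~(3), the standard Krein-Milman/Choquet argument applies: the set $\mathcal{I}$ of shift-invariant probability measures on $\mathcal{T}$ compatible with the equation is non-empty (by~(1)), convex, and compact in a suitable topology, hence admits extreme points which by the usual characterization of extremal invariant measures are precisely the ergodic ones. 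To produce \emph{infinitely many} ergodic non-Gaussian solutions, one repeats the above construction with different non-Gaussian initial profiles $Z$ and checks that the resulting invariant measures have disjoint supports on low Fourier modes, ensuring mutual singularity and thus distinctness of the ergodic components.

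The main obstacle is the non-linear passage to the limit under weak convergence: the quadratic term $\nabla^\perp(g+z)\Lambda(g+z)$ requires the commutator-type argument of Lemma~\ref{lem:Commutator}, and one has to carefully exploit the $\dot{H}^{1/2}$-type control together with the independence of the noise increments in backward time to make sense of the product term in the limit probability space. The construction of infinitely many \emph{ergodic} (as opposed to merely stationary) solutions is the second delicate point, as one must exhibit pairwise distinct ergodic measures rather than merely invariant ones; this is handled by careful selection of initial perturbations living on non-overlapping Fourier supports, in the spirit of~\cite[Theorem 8.5]{hofmanovaconvex6}.
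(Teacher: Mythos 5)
Your proposal is correct and follows essentially the same route as the paper. The paper itself offers only a one-sentence sketch ("The proofs follow a tightness argument for the ergodic averages to deduce stationarity as well as a Krein-Milman argument to deduce ergodicity. Non-Gaussianity follows as described in the proof of Theorem~\ref{thm:initCond}.") and delegates the details to~\cite[Theorems 8.2, 8.5, Corollary 8.3]{hofmanovaconvex6} and~\cite[Theorem 4.2]{hofmanovaconvex4}. Your sketch fills in exactly those details in a way consistent with the cited arguments: Krylov--Bogoliubov time averages $\mu_T$, tightness via the uniform-in-time moment bounds in $L^{2m}_\omega C_t B^{-1/2+\epsilon}_{\infty,1}\cap L^{2m}_\omega C^{1/2-\sigma}_t B^{-1-\epsilon}_{\infty,1}$ combined with compact embeddings into the trajectory space, Jakubowski--Skorokhod representation, passage to the limit in the quadratic term via Lemma~\ref{lem:Commutator} under $\dot{H}^{1/2}$-control, Krein--Milman/Choquet for ergodic selection, and persistence of non-Gaussianity through the auxiliary $Z$ on low Fourier modes. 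One small imprecision: your phrase about the compact embedding of the Besov/Hölder regularity class ``into $B^{-1/2}_{p,1}\times B^{-1-}_{p,p}$'' conflates the spatial target with the trajectory space $\mathcal{T}=C(\mathbb{R},B^{-1/2}_{p,1})\times C(\mathbb{R},B^{-1-}_{p,p})$; the actual compactness is of Aubin--Lions type, exploiting the gain in spatial Besov index together with temporal Hölder equicontinuity on bounded time intervals, and then a diagonal argument to get tightness on $\mathcal{T}$. This does not affect the overall correctness.
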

The proofs follow a tightness argument for the ergodic averages to deduce stationarity as well as a Krein-Milman argument to deduce ergodicity. Non-Gaussianity follows as described in the proof of Theorem \ref{thm:initCond}.

\appendix
\section{Auxiliary results}
\subsection{A multiplier and commutator Lemma}

\begin{lemma}[{\cite[Lemma~3.2]{MR4340931}}] \label{lem:LinfQuantified}
    Suppose $a \in L^\infty(\mathbb{T}^2)$ such that $a$ is mean-free with $\mathrm{supp}(\widehat{a}) \subset \{ \abs{k} \leq \mu \}$ and $\mu \geq 10$. Let $m \in C^\infty( \mathbb{R}^2 \backslash \{ 0 \} )$ be a homogeneous function of degree 0 and $T_m$ is the Fourier multiplier defined by $\widehat{T_m f}(k) = m(k) \widehat{f}(k)$. Then there exists a constant $C_m \geq 1$ such that
    \begin{align}
        \norm{T_m a}_{L^\infty(\mathbb{T}^2)} \leq C_m \log(\mu) \norm{a}_{L^\infty(\mathbb{T}^2)}.
    \end{align}
\end{lemma}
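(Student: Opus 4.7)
The plan is to reduce to uniform-in-frequency estimates on dyadic blocks via Littlewood--Paley decomposition and pay the logarithmic factor simply by counting how many blocks are active. The decay (in fact, non-expansion) comes from the homogeneity of $m$, which causes the convolution kernel associated with $T_m \Delta_j$ to be a rescaling of a single fixed Schwartz kernel, and hence to have $L^1$-norm independent of~$j$.

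Concretely, I would decompose $a = \sum_{j = -1}^{j_\mu} \Delta_j a$, where $j_\mu := \lceil \log_2 \mu \rceil + O(1)$ collects all dyadic scales up to the spectral cutoff $\mu$; the frequency-support hypothesis guarantees that $\Delta_j a = 0$ for $j > j_\mu$. The mean-free assumption on $a$ guarantees $\widehat{a}(0) = 0$, so the fact that $m$ has a singularity at the origin does not obstruct the definition of $T_m a$, and the $\Delta_{-1}$ block can be either removed or handled analogously. By the triangle inequality,
\begin{align*}
\norm{T_m a}_{L^\infty} \leq \sum_{j=-1}^{j_\mu} \norm{T_m \Delta_j a}_{L^\infty}.
\end{align*}

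The main step is the uniform bound $\norm{T_m \Delta_j a}_{L^\infty} \lesssim_m \norm{a}_{L^\infty}$ for every $j$. The symbol of $T_m \Delta_j$ is $m(k) \varphi(2^{-j} k)$, where $\varphi$ is the annular cutoff defining the dyadic projector. Using that $m$ is homogeneous of degree~$0$ (so $m(k) = m(2^{-j} k)$), this symbol equals $(m \cdot \varphi)(2^{-j} k)$. Since $\varphi$ is supported in an annulus bounded away from zero and $m$ is smooth there, the function $m \varphi$ lies in $\mathcal{S}(\mathbb{R}^2)$, with inverse Fourier transform a Schwartz kernel $K_0$. The kernel of $T_m \Delta_j$ is then $K_j(x) = 2^{2j} K_0(2^j x)$, which has $\norm{K_j}_{L^1} = \norm{K_0}_{L^1}$ independently of~$j$. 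Young's convolution inequality yields $\norm{T_m \Delta_j a}_{L^\infty} \leq \norm{K_0}_{L^1} \norm{a}_{L^\infty}$, and summing $O(\log \mu)$ such contributions produces the claimed estimate.

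The main obstacle is making the uniform kernel bound rigorous on $\mathbb{T}^2$ rather than $\mathbb{R}^2$: one has to work with the periodic kernels $K_j^{\mathrm{per}}$ obtained by summing translates of the Schwartz $K_j$, and verify that their $L^1(\mathbb{T}^2)$-norms remain uniformly bounded. This is standard because the Schwartz decay of $K_0$ gives geometric control of the periodization, but it is the step where the degree-$0$ homogeneity of $m$ is genuinely essential; without it, one would pick up additional scale-dependent factors that could not be absorbed into the logarithmic count. The sharpness of $\log \mu$ is consistent with the well-known failure of $L^\infty$-boundedness of zero-order Fourier multipliers such as the Riesz transforms.
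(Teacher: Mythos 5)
Your proof is correct and is the standard Littlewood--Paley argument: decompose into $O(\log\mu)$ dyadic blocks, use degree-$0$ homogeneity to rescale each $T_m\Delta_j$ to a single fixed Schwartz kernel with $L^1$-norm independent of $j$, and sum. The paper does not supply its own proof but simply cites Cheng--Kwon--Li~\cite[Lemma~3.2]{MR4340931}, whose argument is precisely this one; the periodization step you flag as delicate is in fact immediate, since Poisson summation shows the $L^1(\mathbb{T}^2)$-norm of the periodized kernel is bounded by the $L^1(\mathbb{R}^2)$-norm of the Schwartz kernel.
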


The next lemma allows to define the non-linearity in~\eqref{eq:SolutionConcept} for $g \in H^{1/2}$. Notice that for smooth $g$ and $\xi$ it holds
\begin{align*}
    \langle \nabla \cdot [ \nabla^{\perp} g \Lambda g ], \xi \rangle &= -\langle \Lambda g ,\nabla^{\perp} g  \cdot \nabla \xi \rangle.
\end{align*}

Moreover, $(g,g') \mapsto \langle \Lambda g ,\nabla^{\perp} g'  \cdot \nabla \xi \rangle$ for fixed $\xi \in H^4(\mathbb{T}^2)$ extends to a bi-linear and bounded map from $H^{1/2}(\mathbb{T}^2)\times H^{1/2}(\mathbb{T}^2) \to \mathbb{R}$. This is closely related to the commutator estimate of Cheng, Kwon and Li~\cite[Proposition~5.1]{MR4340931}. However, they work on the level $\theta = \Lambda g$, which is why for the sake of completeness, we provide the proof for our setting. 

\begin{lemma}\label{lem:Commutator}
There exists a constant $C \geq 1$ such that for all $g, g' \in H^{1/2}$ and $\xi \in H^4$ it holds
\begin{align}
    \abs{ \langle \Lambda g ,\nabla^{\perp} g'  \cdot \nabla \xi \rangle } \leq C \norm{g}_{\dot{H}^{1/2}} \norm{g'}_{\dot{H}^{1/2}} \norm{\xi}_{\dot{H}^4}.
\end{align}
\end{lemma}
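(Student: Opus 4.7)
The plan is to rewrite the pairing via integration by parts and self-adjointness of $\Lambda$, and then to split the result using Bony's product formula into a ``regular'' term bounded by a standard Sobolev product estimate and a ``commutator'' term that exploits a symbolic cancellation. Since $\nabla^\perp\cdot\nabla\xi=0$, one first rewrites $\nabla^\perp g'\cdot\nabla\xi=\nabla^\perp\cdot(g'\nabla\xi)$, so integration by parts combined with the self-adjointness of $\Lambda$ (applied componentwise to the vector $\nabla^\perp g$) yields
\[
\langle \Lambda g, \nabla^\perp g'\cdot\nabla\xi\rangle = -\langle \nabla^\perp g, \Lambda(g'\nabla\xi)\rangle.
\]
Splitting $\Lambda(g'\nabla\xi)=g'\,\Lambda\nabla\xi+[\Lambda,g']\nabla\xi$ via Bony's decomposition expresses the pairing as $\mathrm{I}+\mathrm{II}$ with $\mathrm{I}=-\langle\nabla^\perp g, g'\,\Lambda\nabla\xi\rangle$ and $\mathrm{II}=-\langle\nabla^\perp g, [\Lambda, g']\nabla\xi\rangle$.

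For $\mathrm{I}$, duality together with $\|\nabla^\perp g\|_{\dot H^{-1/2}}\lesssim\|g\|_{\dot H^{1/2}}$ reduces matters to the product estimate $\|g'\Lambda\nabla\xi\|_{\dot H^{1/2}}\lesssim\|g'\|_{\dot H^{1/2}}\|\xi\|_{\dot H^4}$, which is standard once the 2D Sobolev embedding $\dot H^4(\mathbb T^2)\hookrightarrow C^{3-}$ is invoked: it places $\Lambda\nabla\xi$ in $C^{1-}$ with norm $\lesssim\|\xi\|_{\dot H^4}$, and multiplication by a $C^{1-}$ function is bounded on $\dot H^{1/2}$. For the commutator term $\mathrm{II}$, a further application of the self-adjointness of $\Lambda$ to each component $[\Lambda,g'](\nabla\xi)_i$ recasts
\[
\mathrm{II} = -\int_{\mathbb T^2} g'\bigl(\{\Lambda g,\xi\} - \{g,\Lambda\xi\}\bigr)\,\mathrm d x,
\]
where $\{f,h\}:=\nabla^\perp f\cdot\nabla h$ is the 2D Poisson bracket, so by duality the estimate reduces to showing
\[
\|\{\Lambda g,\xi\} - \{g,\Lambda\xi\}\|_{\dot H^{-1/2}} \lesssim \|g\|_{\dot H^{1/2}}\|\xi\|_{\dot H^4}.
\]

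To establish this last estimate I would perform a Littlewood--Paley decomposition of $(g,\xi)$: the Fourier symbol of the operator is $(k^\perp\cdot l)(|k|-|l|)$, and the factor $|k|-|l|$ vanishes on the resonant diagonal $|k|=|l|$, supplying the cancellation that absorbs an otherwise fatal derivative loss. In the non-resonant regimes (where one of $|k|, |l|$ dominates) Bernstein's inequality together with the ample regularity $\xi\in\dot H^4$ directly yields enough decay to close the estimate. The principal obstacle is the resonant regime $|k|\sim|l|$: here a direct H\"older bound loses a full derivative, and the symbolic cancellation $|k|-|l|$ must be extracted carefully through a paraproduct argument against $\nabla\xi$, in the spirit of the commutator bound~\cite[Proposition~5.1]{MR4340931}. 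Once that resonant bound is in hand, duality with $g'\in\dot H^{1/2}$ completes the proof.
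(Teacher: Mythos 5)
Your route --- integrate by parts, split off the commutator $[\Lambda, g']$, and dualize onto $g'$ --- is genuinely different from the paper's. The paper never integrates by parts: it writes the pairing directly as a Fourier sum, applies Cauchy--Schwarz first in $k$ to peel off $\|g\|_{\dot H^{1/2}}$, and estimates what remains via a dichotomy in $|k-l|$ versus $|l|$, with the cancellation supplied by the purely algebraic identity $l\cdot k^\perp = (l-k)\cdot k^\perp$ (from $k\cdot k^\perp=0$). Your algebraic reduction of $\mathrm{II}$ to $-\langle g',\, \{\Lambda g,\xi\}-\{g,\Lambda\xi\}\rangle$ and the identification of the extra commutator factor $|k|-|l|$ in its symbol are both correct.

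There are, however, two concrete problems with the plan. First, the resonant-regime estimate that you yourself label ``the principal obstacle'' is never actually proved; deferring to ``a paraproduct argument \ldots in the spirit of'' \cite[Proposition~5.1]{MR4340931} is not a proof --- that resonant bound \emph{is} the substance of the lemma. Second, and more seriously, the claim that Bernstein plus the ``ample regularity $\xi\in\dot H^4$'' closes all non-resonant regimes is not correct in the regime $|k|\gg|l|$: there $\xi$ sits at \emph{low} frequency, so the weight $|l|^4$ coming from $\|\xi\|_{\dot H^4}$ is $O(1)$ and gives no decay at all, while the symbol $(|k|-|l|)(k^\perp\cdot l)$ behaves like $|k|^2|l|$. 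Neither the commutator cancellation $\lvert|k|-|l|\rvert\lesssim|k+l|$ nor the orthogonality $k^\perp\cdot l=(k+l)^\perp\cdot l$ helps, because $|k+l|\sim|k|$ in this regime, so a loss of order $|k|$ survives against the only available weights $|k|^{1/2}$ from $g$ and $|k+l|^{1/2}$ from the $\dot H^{-1/2}$ target. You can check this on the single-mode example $g=N^{-1/2}\cos(Nx_1)$, $g'=N^{-1/2}\cos(Nx_1+x_2)$, $\xi=\cos(x_2)$, whose $\dot H^{1/2}$ and $\dot H^4$ norms are all $O(1)$ but for which $\mathrm{II}$ grows linearly in $N$. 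So the regime $|k|\gg|l|$ --- not the diagonal $|k|\sim|l|$ --- is where the argument actually hangs, and it is passed over in a single sentence. A final terminological slip: the splitting $\Lambda(g'\nabla\xi)=g'\Lambda\nabla\xi+[\Lambda,g']\nabla\xi$ is just the definition of a commutator, not Bony's decomposition, which would enter only in the further paraproduct analysis you contemplate but do not carry out.
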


\begin{proof}
Notice that 
    \begin{align*}
        \langle \Lambda g , \nabla^\perp g' \cdot \nabla \xi \rangle  &= \sum_{k,l}   \abs{k} l \cdot k^\perp  \widehat{g} (k) \widehat{g'}(l) \widehat{\xi}(k-l) \\
        &= \sum_{k \neq 0} \sum_{l \neq 0,k}   \abs{k} l \cdot k^\perp  \widehat{g} (k) \widehat{g'}(l) \widehat{\xi}(k-l).
    \end{align*}
H\"older's inequality shows
\begin{align*}
    &\sum_{k\neq 0} \sum_{l \neq 0,k}  \abs{k} l \cdot k^\perp  \widehat{g} (k) \widehat{g'}(l) \widehat{\xi}(k-l) \\
    &\hspace{2em} \leq  \left( \sum_{k\neq 0} \abs{k} \abs{ \widehat{g} (k) }^2 \right)^{1/2} \left( \sum_{k\neq 0} \abs{ \sum_{l \neq 0,k} \abs{k}^{1/2} l \cdot k^\perp  \widehat{g'}(l) \widehat{\xi}(k-l) }^2 \right)^{1/2}.
\end{align*}
Therefore it remains to verify
\begin{align} \label{eq:WantToShow}
     &\left( \sum_{k\neq 0} \abs{ \sum_{l \neq 0,k} \abs{k}^{1/2} l \cdot k^\perp  \widehat{g'}(l) \widehat{\xi}(k-l) }^2 \right)^{1/2} \lesssim \norm{g'}_{\dot{H}^{1/2}} \norm{\xi}_{\dot{H}^4}.
\end{align}
    
We distinguish two cases.
\begin{enumerate}
    \item \label{it:CaseBigDifferences} If $ \abs{l} \lesssim \abs{k-l}$, then $\abs{k} \lesssim \abs{k-l}$ and
\begin{align*}
  \abs{k}^{1/2} l \cdot k^\perp &\leq  \abs{k}^{3/2} \abs{l}  \lesssim \abs{k-l}^{2} \abs{l}^{1/2}.
\end{align*}
\item  \label{it:CaseSmallDifferences}  If $\abs{k-l} \ll \abs{l} $, then $\abs{k} \sim \abs{l}$ and  
\begin{align*}
  \abs{k}^{1/2} l \cdot k^\perp &= \abs{k}^{1/2} (k-l) \cdot k^\perp  \leq \abs{k}^{3/2} \abs{k-l}   < \abs{k}^{3/2} \abs{k-l}^{1/2} \abs{l}^{1/2}.
\end{align*}
\end{enumerate}

Split up the interior sum of the left hand side of~\eqref{eq:WantToShow} into the two cases and estimate separately. For fixed $k$ by H\"older's inequality
\begin{align*}
    &\sum_{l \neq 0,k \, \& \abs{l} \lesssim \abs{k-l}}  l \cdot k^\perp  \widehat{g'}(l) \widehat{\xi}(k-l) \\
    &\hspace{2em} \leq \sum_{l \neq 0,k \&  \abs{l} \lesssim \abs{k-l}}  \abs{l}^{1/2} \abs{\widehat{g'}(l)} \abs{k-l}^{2} \abs{\widehat{\xi}(k-l)} \\
    &\hspace{2em} \leq \left( \sum_{l \neq 0,k \&  \abs{l} \lesssim \abs{k-l}}  \abs{l} \abs{\widehat{g'}(l)}^2 \right)^{1/2} \left( \sum_{l \neq 0,k \&  \abs{l} \lesssim \abs{k-l}}  \abs{k-l}^{4} \abs{\widehat{\xi}(k-l)}^2 \right)^{1/2}.
\end{align*}

The second case can be treated analogously,
\begin{align*}
    &\sum_{l \neq 0,k \, \& \abs{k-l} \ll \abs{l} }   l \cdot k^\perp  \widehat{g'}(l) \widehat{\xi}(k-l)  \\
     &\hspace{2em} \leq \sum_{l \neq 0,k \, \& \abs{k-l} \ll \abs{l} }  \abs{l}^{1/2} \abs{\widehat{g'}(l)} \abs{k}^{3/2} \abs{k-l}^{1/2} \abs{\widehat{\xi}(k-l)} \\
    &\hspace{2em} \leq \left( \sum_{l \neq 0,k \, \& \abs{k-l} \ll \abs{l} }  \abs{l} \abs{\widehat{g'}(l)}^2 \right)^{1/2} \left( \sum_{l \neq 0,k \, \& \abs{k-l} \ll \abs{l} }  \abs{k}^3 \abs{k-l} \abs{\widehat{\xi}(k-l)}^2 \right)^{1/2}.
\end{align*}

Lastly, we need to bound
\begin{align*}
    \sum_{k\neq 0}  \sum_{l \neq 0,k \&  \abs{l} \lesssim \abs{k-l}}  \abs{k-l}^{4} \abs{\widehat{\xi}(k-l)}^2 \lesssim  \sum_{k\neq 0} \abs{k}^{8} \abs{\widehat{\xi}(k)}^2 = \norm{\xi}_{\dot{H}^{4}}^2.
\end{align*}

A change of variables $(k,l) \mapsto (k',l') = (k,k-l)$ reveals a similar bound for the second case. Indeed, $ \abs{l'} =  \abs{k-l} \ll \abs{l} \sim \abs{k} = \abs{k'}$ and, therefore,
\begin{align*}
    &\sum_{k\neq 0}  \sum_{l \neq 0,k \, \& \abs{k-l} \ll \abs{l} }  \abs{k}^3 \abs{k-l}  \abs{\widehat{\xi}(k-l)}^2 \\
    &\hspace{2em}=\sum_{k'\neq 0}  \sum_{l' \neq 0,k' \, \& \abs{l'} \ll \abs{k'} }  \abs{k'}^3 \abs{l'}  \abs{\widehat{\xi}(l')}^2 \\
    &\hspace{2em}\lesssim \sum_{k'\neq 0}  \sum_{l' \neq 0,k' \, \& \abs{l'} \ll \abs{k'} }  \abs{k'}^{-4} \abs{l'}^8  \abs{\widehat{\xi}(l')}^2 \lesssim \norm{\xi}_{\dot{H}^{4}}^2.
\end{align*}
\end{proof}

\subsection{Transformation of SQG equations}
\label{equation trafo intro}
We have the following 
\begin{lemma}\label{lem:trafo}
    Let $g\in L^{2m}C_{\rm loc}B^{1/2}_{\infty, 1}$ be a weak solution to \eqref{eq:SolutionConcept}. Let 
    \[
    z_t=\int_0^t e^{-\nu(t-s)\Lambda^\gamma}\Lambda^{\delta-1}{\rm d}W_s
    \]
    be well defined and enjoy the regularity \eqref{regularity z}. Then $\theta:=\Lambda(g+z)$ is a weak solution to \eqref{sqg intro} in the sense that 
    \[
    \langle  \theta_t, \xi\rangle-\langle  \theta_0, \xi\rangle +\int_0^t\langle \mathcal{R}^\perp \theta_s \cdot \nabla\theta_s, \xi \rangle ds=-\nu \int_0^t\langle \Lambda^\gamma\theta_s, \xi \rangle ds+\langle \Lambda^\delta (W_t-W_0), \xi\rangle
    \]
     for some $\theta_0\in H^{1/2}$ and any $\xi\in C^\infty_c(\mathbb{T}^2)$ where the non-linearity is understood by means of the identity
    \[
    \langle \mathcal{R}^\perp \theta \cdot \nabla \theta, \xi \rangle=\frac{1}{2}\langle \theta, [ \mathcal{R}^\perp\cdot, \nabla \xi] \theta\rangle
    \]
\end{lemma}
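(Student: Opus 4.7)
The plan is to test the weak formulation~\eqref{eq:SolutionConcept} satisfied by $g$ against product test functions $\xi(t,x) = \phi(t)\psi(x)$ with $\phi\in C^\infty_c(\mathbb{R})$ and $\psi\in C^\infty(\mathbb{T}^2)$ mean-free, and then reorganize the resulting identity using the mild solution equation satisfied by $z$ into the claimed integral equation for $\theta = \Lambda(g+z)$.

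The first step is to establish the commutator identity for the non-linearity. For smooth $\theta$, integrating by parts once--using that $\mathcal{R}^\perp\theta$ is divergence-free since $\nabla\cdot\mathcal{R}^\perp = 0$--produces $\langle\mathcal{R}^\perp\theta\cdot\nabla\theta,\xi\rangle = -\langle\theta,\mathcal{R}^\perp\theta\cdot\nabla\xi\rangle$, and a second integration by parts, exploiting the anti-self-adjointness $(\mathcal{R}^\perp_j)^* = -\mathcal{R}^\perp_j$ (their Fourier symbols being purely imaginary), yields $\langle\mathcal{R}^\perp\theta\cdot\nabla\theta,\xi\rangle = \sum_j\langle\theta,\mathcal{R}^\perp_j(\partial_j\xi\,\theta)\rangle$. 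Averaging the two representations produces the stated commutator identity, and the right-hand side extends by density to $\theta\in\dot{H}^{-1/2}$ via the bilinear bound of Lemma~\ref{lem:Commutator} (applied with $\Lambda^{-1}\theta = g+z \in \dot{H}^{1/2}$). The same computation also provides the equivalent form $\langle\mathcal{R}^\perp\theta\cdot\nabla\theta,\psi\rangle = -\langle\Lambda(g+z),\nabla^\perp(g+z)\cdot\nabla\psi\rangle$, which directly matches the non-linear term in~\eqref{eq:SolutionConcept}.

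For the main step, I would exploit the identities $\nabla\cdot\mathcal{R} = -\Lambda$ and $\Delta = -\Lambda^2$ on mean-free functions (both trivial on Fourier side) to rewrite the linear terms in~\eqref{eq:SolutionConcept} tested against $\xi = \phi(t)\psi(x)$ as $-\int\phi'(t)\langle\Lambda g(t),\psi\rangle\dt$ and $-\nu\int\phi(t)\langle\Lambda^{\gamma+1}g(t),\psi\rangle\dt$, respectively. Then, substituting $\Lambda g = \theta - \Lambda z$ and using the mild solution representation
\[
\Lambda z_t = -\nu\int_0^t\Lambda^{\gamma+1}z_s\,\ds + \Lambda^\delta(W_t-W_0),
\]
obtained by applying $\Lambda$ to the definition of $z$, an integration by parts in time collapses the $z$-dependent contributions into $-\int\phi'(t)\langle\Lambda^\delta(W_t-W_0),\psi\rangle\dt$. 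The resulting distributional identity
\[
-\int\phi'(t)\langle\theta_t - \Lambda^\delta(W_t-W_0),\psi\rangle\dt + \int\phi(t)\big\langle\mathcal{R}^\perp\theta\cdot\nabla\theta + \nu\Lambda^\gamma\theta,\psi\big\rangle\dt = 0
\]
holds for every $\phi\in C^\infty_c(\mathbb{R})$. Continuity in time of $\theta$ (inherited from $g\in C_{\mathrm{loc}}B^{1/2}_{\infty,1}$ and the stated regularity of $z$) then allows, via the fundamental lemma of the calculus of variations applied to a sequence of $\phi$'s approximating an indicator, the passage to the claimed integrated equation with $\theta_0 = \Lambda g_0$.

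The principal technical obstacle is the rigorous interpretation of the non-linearity $\mathcal{R}^\perp\theta\cdot\nabla\theta$ at the low regularity $\theta \in \dot{H}^{-1/2}$, which is precisely the content of the commutator identity and its extension afforded by Lemma~\ref{lem:Commutator}. Once this interpretation is secured, the remainder is a deterministic chain of integration-by-parts identities balancing the equation satisfied by $g$ against the mild SPDE satisfied by $z$, and no probabilistic input beyond the very definition of the stochastic convolution is required.
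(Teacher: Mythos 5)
Your proposal is correct and takes the same route as the paper, which invokes the commutator extension of Cheng--Kwon--Li (cf.\ Lemma~\ref{lem:Commutator}) and then asserts ``the claim thus follows by a direct calculation'' without spelling it out. Your argument supplies precisely that calculation: the double integration by parts giving the averaged commutator form of the non-linearity, the Fourier-side identities $\nabla\cdot\mathcal{R} = -\Lambda$ and $\nabla\cdot\Lambda^{\gamma-1}\nabla = -\Lambda^{\gamma+1}$ rewriting the linear terms, the substitution $\Lambda g = \theta - \Lambda z$ together with the weak/mild form of the Ornstein--Uhlenbeck SPDE for $z$, and the time integration by parts cancelling the $z$-contributions down to the noise increment -- all checks out, including signs.
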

\begin{proof}
    As already mentioned prior to Lemma \ref{lem:Commutator} it was already shown in \cite[Proposition 5.1]{MR4340931} that the commutator $ [\mathcal{R}^\perp\cdot, \nabla \xi]$ extends to a linear map $[\mathcal{R}^\perp\cdot, \nabla \xi]: \dot{H}^{-1/2}\to \dot{H}^{1/2}$. Hence, $\theta:=\Lambda(g+z)$ enjoys sufficient regularity for the non-linearity to be well-defined. The claim thus follows by a direct calculation. 
\end{proof}

\subsection{The Sewing Lemma}
\label{sewing section}
We recall the Sewing Lemma due to \cite{gubi} (see also \cite[Lemma 4.2]{frizhairer}). Let $E$ be a Banach space, $[0,T]$ a given interval. Let $\Delta_n$ denote the $n$-th simplex of $[0,T]$, i.e. $\Delta_n:\{(t_1, \dots, t_n)| 0\leq t_1\dots\leq t_n\leq T \} $. For a function $A:\Delta_2\to E$ define the mapping $\delta A: \Delta_3\to E$ via
\[
(\delta A)_{s,u,t}:=A_{s,t}-A_{s,u}-A_{u,t}
\]
Provided $A_{t,t}=0$ we say that for $\alpha, \beta>0$ we have $A\in C^{\alpha, \beta}_2(E)$ if $\norm{A}_{\alpha, \beta}<\infty$ where
\begin{align*}
   \norm{A}_\alpha &:=\sup_{(s,t)\in \Delta_2}\frac{\norm{A_{s,t}}_E}{|t-s|^\alpha}, \qquad \norm{\delta A}_{\beta}:=\sup_{(s,u,t)\in \Delta_3}\frac{\norm{(\delta A)_{s,u,t}}_E}{|t-s|^\beta}, \\ \norm{A}_{\alpha, \beta} &:=\norm{A}_\alpha+\norm{\delta A}_\beta.
\end{align*}
For a function $f:[0,T]\to E$, we note $f_{s,t}:=f_t-f_s$

Moreover, if for any sequence $(\mathcal{P}^n([s,t]))_n$ of partitions of $[s,t]$ whose mesh size goes to zero, the quantity 
\[
\lim_{n\to \infty}\sum_{[u,v]\in \mathcal{P}^n([s,t])}A_{u,v}
\]
converges to the same limit, we note
\[
(\mathcal{I} A)_{s,t}:=\lim_{n\to \infty}\sum_{[u,v]\in \mathcal{P}^n([s,t])}A_{u,v}.
\]

\begin{lemma}[Sewing]
Let $0<\alpha\leq 1<\beta$. Then for any $A\in C^{\alpha, \beta}_2(E)$, $(\mathcal{I} A)$ is well defined. Moreover, denoting $(\mathcal{I} A)_t:=(\mathcal{I} A)_{0,t}$, we have $(\mathcal{I} A)\in C^\alpha([0,T], E)$ and $(\mathcal{I} A)_0=0$ and for some constant $c>0$ depending only on $\beta$ we have
\begin{equation}
    \norm{(\mathcal{I} A)_{t}-(\mathcal{I} A)_{s}-A_{s,t}}_{E}\leq c\norm{\delta A}_\beta |t-s|^\beta.
    \label{sewing a priori}
\end{equation}
We say the germ $A$ admits a sewing $(\mathcal{I}A)$ and call $\mathcal{I}$ the sewing operator. 
\label{sewing}
\end{lemma}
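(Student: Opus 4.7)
The plan is to construct $(\mathcal{I}A)_{s,t}$ as the limit of Riemann-type sums $A^{\mathcal{P}}_{s,t} := \sum_{[u,v] \in \mathcal{P}} A_{u,v}$ along partitions $\mathcal{P}$ of $[s,t]$ whose mesh tends to zero, controlling the convergence through the coboundary estimate $\norm{\delta A}_\beta$ and crucially exploiting $\beta > 1$.

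The central algebraic observation is that if $\mathcal{P} = \{s = t_0 < t_1 < \cdots < t_n = t\}$ and $\mathcal{P}^*$ denotes the partition obtained by removing a single interior point $t_j$, then
\begin{equation*}
A^{\mathcal{P}}_{s,t} - A^{\mathcal{P}^*}_{s,t} = A_{t_{j-1},t_j} + A_{t_j,t_{j+1}} - A_{t_{j-1},t_{j+1}} = -(\delta A)_{t_{j-1},t_j,t_{j+1}},
\end{equation*}
so that removing $t_j$ costs at most $\norm{\delta A}_\beta\, |t_{j+1} - t_{j-1}|^\beta$. A pigeonhole argument on the telescoping identity $\sum_{j=1}^{n-1}(t_{j+1}-t_{j-1}) \leq 2|t-s|$ supplies some $j$ with $|t_{j+1} - t_{j-1}| \leq 2|t-s|/(n-1)$. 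Iterating this one-point-removal down to the trivial partition $\{s,t\}$ yields the uniform bound
\begin{equation*}
\norm{A^{\mathcal{P}}_{s,t} - A_{s,t}}_E \leq \norm{\delta A}_\beta\, 2^\beta |t-s|^\beta \sum_{k=1}^{n-1} k^{-\beta} \leq c\, \norm{\delta A}_\beta\, |t-s|^\beta,
\end{equation*}
with $c = c(\beta) < \infty$ precisely because $\beta > 1$. Applying the same point-removal procedure inside each subinterval of a common refinement of two partitions shows that $\{A^{\mathcal{P}}_{s,t}\}$ is Cauchy as the mesh shrinks, and that the limit $(\mathcal{I}A)_{s,t}$ is independent of the approximating sequence.

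The a priori estimate \eqref{sewing a priori} is then just the $\mathcal{P} = \{s,t\}$ case of the bound above, passed to the Cauchy limit. Hölder continuity of $t \mapsto (\mathcal{I}A)_t := (\mathcal{I}A)_{0,t}$ follows from the decomposition $(\mathcal{I}A)_t - (\mathcal{I}A)_s = A_{s,t} + [(\mathcal{I}A)_{s,t} - A_{s,t}]$, whose two summands are bounded respectively by $\norm{A}_\alpha |t-s|^\alpha$ and $c\norm{\delta A}_\beta |t-s|^\beta$; since $\beta > 1 \geq \alpha$ and $|t-s| \leq T$, the $\alpha$-Hölder term dominates. The vanishing of $(\mathcal{I}A)_0$ is automatic from $A_{t,t} = 0$.

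I expect the main technical hurdle to be the pigeonhole-plus-iteration bookkeeping: one must select the removed point at each step so that the induction on $n$ genuinely produces the convergent series $\sum k^{-\beta}$, rather than some cruder bound that fails to close when $\beta$ is only slightly larger than $1$. Once this is set up cleanly, uniqueness of the sewing follows from the same estimate applied to the difference of any two candidate sewings.
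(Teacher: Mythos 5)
Your argument is correct and is precisely the standard Young--Gubinelli pigeonhole proof of the sewing lemma: the one-point-removal identity identifies the cost with $-\delta A$, the telescoping bound $\sum_j (t_{j+1}-t_{j-1}) \leq 2|t-s|$ selects a cheap point, iterating produces $\zeta(\beta) < \infty$ for $\beta>1$, and the Cauchy property follows by applying the same estimate inside each block of a common refinement. Note that the paper itself does not prove this lemma but simply cites \cite{gubi} and \cite[Lemma 4.2]{frizhairer}; your proposal reproduces the proof from those references. The only step you gloss over is the additivity $(\mathcal{I}A)_t - (\mathcal{I}A)_s = (\mathcal{I}A)_{s,t}$ used in the Hölder estimate, which does require a short justification (refine along the common point $s$), but this is routine and your argument is otherwise complete.
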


  \begin{lemma}['Schauder estimate']
        We have for $a\geq 0$ and $\theta\in [0,1]$ and $0\leq s\leq t$
        \[
        (e^{-at}-e^{-as})\lesssim  a^{\theta}(t-s)^\theta.
        \]
        \label{schauder}
    \end{lemma}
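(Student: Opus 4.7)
The plan is to obtain the bound via multiplicative interpolation between the endpoints $\theta = 0$ and $\theta = 1$. For $\theta = 0$ the claim reduces to $|e^{-at} - e^{-as}| \leq 1$, which is immediate since both values lie in $[0,1]$ whenever $a \geq 0$. For $\theta = 1$, I would factor $e^{-as} - e^{-at} = e^{-as}(1 - e^{-a(t-s)})$ and apply the elementary inequality $1 - e^{-x} \leq x$ for $x \geq 0$, together with $e^{-as} \leq 1$, to conclude $|e^{-at} - e^{-as}| \leq a(t-s)$.

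The intermediate case follows by writing
\[
|e^{-at} - e^{-as}| = |e^{-at} - e^{-as}|^{\theta}\,|e^{-at} - e^{-as}|^{1-\theta},
\]
estimating the first factor by the $\theta=1$ bound raised to the power $\theta$ and the second factor by the $\theta=0$ bound raised to the power $1-\theta$. This produces $|e^{-at} - e^{-as}| \leq (a(t-s))^{\theta}$, which is the claimed inequality. There is no genuine obstacle here; the lemma merely records the standard fact that a uniformly bounded, Lipschitz-in-time family is automatically $\theta$-H\"older with constant controlled by a $\theta$-th power of the Lipschitz constant, and the role of $a^\theta$ is to track the dependence on the dissipation parameter when this is applied to semigroup increments like $e^{-\lambda_k^\gamma (t-s)} - 1$ in the proofs of Lemmas~\ref{stoch conv via young} and~\ref{regularity stoch conv 4th moment}.
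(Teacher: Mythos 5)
Your proof is correct and takes essentially the same approach as the paper: interpolate multiplicatively between the trivial uniform bound and the Lipschitz-in-time bound. The only cosmetic difference is that the paper obtains the Lipschitz bound via the integral representation $|e^{-at}-e^{-as}|=|\int_{as}^{at} e^{-x}\,\mathrm{d}x|\leq a(t-s)$ and uses the constant $2$ for the uniform endpoint, whereas you factor and invoke $1-e^{-x}\leq x$ and use the constant $1$; both are equally valid.
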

\begin{proof}
    The proof follows from interpolating the bounds
    \[
    |e^{-at}-e^{-as}|\leq 2, \qquad |e^{-at}-e^{-as}|=|\int_{as}^{at}-e^{-x}{\rm d}x|\leq a(t-s).
    \]
\end{proof}

\printbibliography 


\section*{Acknowledgements}
The authors thank Martina Hofmanová for several helpful discussions. Florian Bechtold and Theresa Lange have received funding from the European Research Council (ERC) under the European Union’s Horizon 2020 research and innovation programme (grant agreement No. 949981). J\"orn Wichmann was partially supported by the Australian Government through the Australian Research Council’s Discovery
Projects funding scheme (grant number DP220100937).
\end{document}